\newtheorem{proposition}{Proposition}[section]
\newtheorem{theorem}[proposition]{Theorem}
\newtheorem{corollary}[proposition]{Corollary}
\newtheorem{lemma}[proposition]{Lemma}
\theoremstyle{definition}
\newtheorem{definition}[proposition]{Definition}
\theoremstyle{remark}
\newtheorem{remark}[proposition]{Remark}
\numberwithin{equation}{section}
\newcommand{\N}{{\mathbb{N}}}
\newcommand{\R}{{\mathbb{R}}}
\newcommand{\loc}{{\mathrm{loc}}}
\DeclareMathOperator{\dist}{dist}
\DeclareMathOperator{\diverg}{div}
\title{A one-sided two phase Bernoulli free boundary problem}
\author{Lorenzo Ferreri, Bozhidar Velichkov}
\begin{document}

\maketitle

\begin{abstract}
We study a two-phase free boundary problem in which the two-phases satisfy an impenetrability condition. Precisely, we have two ordered positive functions, which are harmonic in their supports, satisfy a Bernoulli condition on the one-phase part of the free boundary and a two-phase condition on the collapsed part of the free boundary. For this two-membrane type problem, we prove an epsilon-regularity theorem with sharp modulus of continuity. Precisely, we show that at flat points each of the two boundaries is $C^{1,\sfrac12}$ regular surface. Moreover, we show that the remaining singular set has Hausdorff dimension at most $N-5$ as in the case of the classical one-phase problem, $N$ being the dimension of the space.
\end{abstract}

\noindent
{\footnotesize \textbf{AMS-Subject Classification} 35R35}. 
\\
{\footnotesize \textbf{Keywords} regularity of free boundaries, two-phase problem, free boundary system, vectorial free boundary problems, viscosity solutions, epsilon-regularity, improvement of flatness, optimal regularity, Almgren's frequency function}.

\tableofcontents

\section{Introduction}\label{section:intro}
\subsection{Bernoulli free boundary problems - an overview}
The Bernoulli-type free boundary problems arise from models in fluid dynamics. The classical one-phase problem 
\begin{equation*}
\begin{cases}
\Delta u = 0 & \text{in } \{ u>0 \} \cap B_1 , \\
\vert \nabla u \vert^2 = 1 & \text{on } \partial\{ u>0 \} \cap B_1 \\
u \ge  0 & \text{in } B_1 ,\\
u = u_0 & \text{on } \partial B_1 ,
\end{cases}
\end{equation*}
was first studied in the seminal paper of Alt and Caffarelli \cite{AltCaffarelli:OnePhaseFreeBd} in 1981 and had a fundamental role in the development of the free boundary regularity theory since then. It is today known that if $u$ is a variational solution of this problem, that is, a minimizer of the functional 
$$\int_{B_1}|\nabla u|^2\,dx+|\{u>0\}\cap B_1|,$$
then the free boundary $\partial\{u>0\}$ can be decomposed into a regular and a singular parts, the regular part being $C^{1,\alpha}$ smooth manifold (thanks to \cite{AltCaffarelli:OnePhaseFreeBd} and \cite{DeSilva:FreeBdRegularityOnePhase}), while the singular part is a closed set of Hausdorff dimension at most $N-{N^\ast}$, where $N^\ast$ is the first dimension in which singular free boundaries appear (see \cite{Weiss99:PartialRegularityFreeBd}) and it is known that $5\le N^\ast\le 7$ (\cite{DeSilvaJerison09:SingularConesIn7D}, \cite{CaffarelliJerisonKenig04:NoSingularCones3D} and \cite{JerisonSavin15:NoSingularCones4D}). 

The two-phase counterpart of this problem was proposed by Alt, Caffarelli and Friedman \cite{AltCaffarelliFriedman1984:TwoPhaseBernoulli} in 1984 and consists in minimizing the functional
\begin{equation}\label{e:two-phase-functional-introduction}
\int_{B_1}|\nabla u|^2\,dx+\int_{B_1}|\nabla v|^2\,dx+\Lambda_u|\{u>0\}\cap B_1|+\Lambda_v|\{v>0\}\cap B_1|,
\end{equation}
with $\Lambda_u>0$, $\Lambda_v>0$, and under the condition 
$$\{u>0\}\cap\{v>0\}=\emptyset\quad\text{in}\quad B_1.$$
This leads to the system 
\begin{equation*}
\begin{cases}
\Delta u = 0 & \text{in } \{ u>0 \} \cap B_1 , \\
\Delta v = 0 & \text{in } \{ v>0 \} \cap B_1 , \\
\vert \nabla u \vert^2 = \Lambda_u & \text{on } \partial\{ u>0 \} \setminus  \partial\{ v>0 \}, \\
\vert \nabla v \vert^2 = \Lambda_v & \text{on } \partial\{ v>0 \} \setminus  \partial\{ u>0 \}, \\
\vert \nabla u \vert^2 - \vert \nabla v \vert^2 = \Lambda_u - \Lambda_v & \text{on } \partial\{ u>0 \} \cap \partial\{ v>0 \} , \\
u \ge 0, \quad v \ge 0 & \text{in } B_1 ,\\
u = u_0, \quad v = v_0 & \text{on } \partial B_1,
\end{cases}
\end{equation*}
where $u_0$ and $v_0$ are fixed boundary data. 
This problem was extensively studied and generalized in the case $|\{u=0\}\cap \{v=0\}\cap B_1|=0$, starting from the original paper \cite{AltCaffarelliFriedman1984:TwoPhaseBernoulli}; it led to important developments as the series of papers \cite{Caffarelli1987:Harnack1,Caffarelli1989:Harnack2} (see also the book \cite{CaffarelliSalsa:GeomApproachToFreeBoundary}) and the more recent \cite{DeSilvaFerrariSalsa:2PhaseFreeBdDivergenceForm, DeSilvaFerrariSalsa2015, DeSilvaFerrariSalsa2017} and the references therein. 

The regularity of the free boundary of the solutions to the two-phase problem in the general case, without the requirement $|\{u=0\}\cap \{v=0\}\cap B_1|=0$, was obtained only recently in \cite{SpolaorVelichkov} and \cite{DePhilippisSpolaorVelichkov2021:TwoPhaseBernoulli}, where it was shown that, in a neighborhood of any two-phase boundary point $x_0\in\partial\{u>0\}\cap\partial\{v>0\}$, both $\partial\{u>0\}$ and $\partial\{v>0\}$ are $C^{1,\alpha}$ manifolds. 

A more general version of the two-phase problem is the so-called vectorial Bernoulli problem introduced and studied in \cite{CaffarelliShahgholianYeressian2018:vectorial,KriventsovLin2018:nondegenerate,MazzoleniTerraciniVelichkov2017:RegularitySpectralFunctionals,KriventsovLin2019:degenerate, MazzoleniTerraciniVelichkov2020:RegularityVectorialBernoulli, DeSilvaTortone2020:ViscousVectorialBernoulli}, which consists in minimizing the functional 
$$\int_{B_1}|\nabla U|^2\,dx+|\{|U|>0\}\cap B_1|,$$
where $U=(u_1,\dots,u_k)$, $U:\R^d\to\R^k$ is a vector valued function with prescribed datum on $\partial B_1$. If we take $k=2$, $u=u_1$ and $v=u_2$, then in $B_1$ a minimizer $U=(u,v)$ formally solves the free boundary problem
\begin{equation*}
\begin{cases}
\Delta u = 0 & \text{in } \Omega \cap B_1 , \\
\Delta v = 0 & \text{in } \Omega \cap B_1 , \\
u=v=0& \text{on } B_1\setminus \Omega ,\\
\vert \nabla u \vert^2 + \vert \nabla v \vert^2 = 1 & \text{on } \partial\Omega\cap B_1 , 
\end{cases}
\end{equation*}
where $\Omega=\{|U|>0\}$. This is a particular case of a free boundary system in which several functions solve a PDE in the same domain $\Omega$ and then satisfy a boundary condition on $\partial\Omega$. We notice that for solutions of the vectorial problem singular points might occur on the boundary of $\Omega$ even in the case when all the components of the vector $U$ are non-negative (see \cite{CaffarelliShahgholianYeressian2018:vectorial,KriventsovLin2018:nondegenerate,MazzoleniTerraciniVelichkov2017:RegularitySpectralFunctionals} for the vectorial case and also \cite{buttazzo2022:regularity} for the case of another free boundary system).  \medskip

\subsection{Statement of the problem}
In the present paper we study a variational free boundary problem with two phases $u$ and $v$, in which we minimize the Alt-Caffarelli-Friedman's functional \eqref{e:two-phase-functional-introduction} under the condition $$u\ge v\ge 0\quad\text{in}\quad B_1,$$ 
which,  
 in particular, means that their positivity sets satisfy the inclusion constraint 
$$\{v>0\}\subset \{u>0\}\quad\text{in}\quad B_1.$$
The minimizers $u$ and $v$ of this variational problem are solutions, in a suitable sense (see Section \ref{subsection:ViscositySetting}), to the system
\begin{equation}\label{eqn:MainViscousPb}
\begin{cases}
\Delta u = 0 & \text{in } \{ u>0 \} \cap B_1 , \\
\Delta v = 0 & \text{in } \{ v>0 \} \cap B_1 , \\
\vert \nabla u \vert^2 = \Lambda_u & \text{on } \partial\{ u>0 \} \setminus  \partial\{ v>0 \}, \\
\vert \nabla v \vert^2 = \Lambda_v & \text{on } \partial\{ v>0 \} \setminus  \partial\{ u>0 \}, \\
\vert \nabla u \vert^2 + \vert \nabla v \vert^2 = \Lambda_u + \Lambda_v = 1 & \text{on } \partial\{ u>0 \} \cap \partial\{ v>0 \} , \\
u > 0, \quad v > 0 & \text{in } B_1 ,\\
u = u_0, \quad v = v_0 & \text{on } \partial B_1,
\end{cases}
\end{equation}
where $\Lambda_u>\Lambda_v>0$ and $u_0\ge v_0$ are given.\medskip

This problem models for instance the lowest energy equilibria of two linear elastic membranes in air (with fixed configuration at the boundary), in which the required ordering has the meaning of an impenetrability condition. \medskip

The problem \eqref{eqn:MainViscousPb} combines some of the main difficulties from the two-phase and the vectorial problems. Indeed, each of the two function $u$ and $v$ solves a PDE only on its positivity set; then, there are different boundary conditions on the one-phase free boundaries  $\partial\{ u>0 \} \setminus \partial\{ v>0 \}\cap B_1$ and $\partial\{ v>0 \} \setminus \partial\{ u>0 \}\cap B_1$, and on the two-phase free boundary $\partial\{ u>0 \} \cap \partial\{ v>0 \}\cap B_1$. In this, the problem \eqref{eqn:MainViscousPb} is similar to the two-phase problem from \cite{DePhilippisSpolaorVelichkov2021:TwoPhaseBernoulli}. In fact, as we will show in Theorem \ref{thm:C1aRegularity}, the two boundaries $\{u>0\}$ and $\{v>0\}$ are tangent at any two-phase point $x_0\in\partial\{ u>0 \} \cap \partial\{ v>0 \}\cap B_1$ and, just as in \cite{DePhilippisSpolaorVelichkov2021:TwoPhaseBernoulli}, they can separate forming a cusp. On the other hand, contrary to what happens in \cite{DePhilippisSpolaorVelichkov2021:TwoPhaseBernoulli}, $\{u>0\}$ and $\{v>0\}$ might not be smooth in a neighborhood of a two-phase point and singularities two-phase points might appear, exactly as in the case of the vectorial problem. \medskip

Finally, we notice that a Bernoulli free boundary problem with multiple phases $u_1,\dots,u_k$ satisfying the impenetrability condition
$$u_1\ge u_2\ge \dots\ge u_k\ge 0,$$
was studied recently by De Silva and Savin in \cite{DeSilvaSavin2023:MultiMembrane} (see also \cite{SavinHui2021:3membrane} and \cite{SavinHui2023:multimembrane} for the obstacle problem counterpart). We stress that, even if the impenetrability condition is the same as ours, their functional is different and leads to a free boundary system in which the functions $u_j$ are not harmonic in $\{u_j>0\}$ as their gradients jump across the junctions $\{u_j=u_{j+1}>0\}$. 

\subsection{Notations and main results} Given two positive constants $\Lambda_u \ge \Lambda_v >0$ let us denote with $\mathcal{F}$ the functional
\[
\mathcal{F}(u, v) \coloneqq \int_{B_1} \vert \nabla u \vert^2 + \vert \nabla v \vert^2 + \Lambda_u \vert \{u>0\} \cap B_1 \vert + \Lambda_v \vert \{v>0\} \cap B_1 \vert,
\]
where, without loss of generality, we assume $\Lambda_u + \Lambda_v = 1$. Let the functions $u_0, v_0 \in H^1(B_1)$ be given, such that $u_0 \ge v_0$ almost everywhere in $B_1$ and introduce the set $\mathcal{A}_0$ defined as
\[
\mathcal{A}_0 \coloneqq \left\{ (u, v) \in H_1(B_1) \times H_1(B_1) : u-u_0, v-v_0 \in H^1_0\left(\R^N\right) \right\}.
\]
We study the problem
\[
\inf_{(u, v) \in \mathcal{A}_0} \mathcal{F}(u, v), \text{ under the additional constraint } u \ge v \text{ a.e. in } B_1.
\]
In the following, we will denote with $\mathcal{A}$ the set of competitors, namely
\[
\mathcal{A} \coloneqq \left\{ (u, v) \in \mathcal{A}_0 : u \ge v \text{ a.e. in } B_1 \right\}.
\]
More precisely, we study the couples $(u, v) \in \mathcal{A}$ such that
\begin{equation}\label{eqn:MinimumPbExplicit}
\mathcal{F}(u, v) \le \mathcal{F}(h, w) , \quad \forall (h, w) \in \mathcal{A}.
\end{equation}
The fact that minimizers exist is a straightforward application of the direct method in the calculus of variations, thanks to the ordering condition ($u \ge v$) required to the competitors. 

%
%

Before we state our main results, we notice that, given a minimizer $(u,v)$ of the above variational problem, the structure of the one-phase parts of the free boundary
$$\partial\{ u>0 \} \setminus \partial\{ v>0 \}\cap B_1\qquad\text{and}\qquad \partial\{ v>0 \} \setminus \partial\{ u>0 \}\cap B_1,$$ 
is described by the classical results for the one-phase problem (see \cite{AltCaffarelli:OnePhaseFreeBd,Weiss99:PartialRegularityFreeBd,DeSilvaJerison09:SingularConesIn7D,CaffarelliJerisonKenig04:NoSingularCones3D,JerisonSavin15:NoSingularCones4D}).

In the following, we denote with $Reg(u, v)$ the regular part of the free boundary, namely the points at which the blow up is a half-plane solution (see section \ref{subsection:BlowUp}). Notice that if $x_0$ is a regular point for $\partial\{ u>0 \}$, then it is also a regular point for $\partial\{ v>0 \}$ and viceversa (see e.g. Lemma \ref{lemma:TwoPhaseBlowUpProportional}).

We have two main results. The first one concerns the almost-optimal regularity of the free boundaries $\partial\{ u>0 \}$, $\partial\{ v>0 \}$ and is contained in the following
\begin{theorem}\label{thm:C1aRegularity}
Let $(u, v)$ be a viscosity solution of problem \eqref{eqn:MainViscousPb}. Then, the two-phase part of the free boundary can be decomposed as
$$\partial\{ u>0 \}\cap\partial\{ v>0 \}\cap B_1=Reg(u, v)\cup Sing(u, v),$$
where $Reg(u, v)$ and $Sing(u, v)$ are disjoint and have the following properties: 
\begin{enumerate}
\item[(A)]
In a neighborhood of any point $x \in Reg(u, v)$,   
the sets $\partial\{ u>0 \} \cap B_1$ and $\partial\{ v>0 \} \cap B_1$ are  $C^{1, \alpha}$ manifolds, for all $0 < \alpha < 1/2$. 
\item[(B)] There exists a critical dimension\footnote{The critical dimension $N^\ast$ is the first dimension in which there are solutions to the classical one-phase Bernoulli problem with singular free boundary.} $N^{\ast} \in \{ 5, 6, 7 \}$ such that:
\begin{itemize}
    \item[(i)] if $N < N^{\ast}$ then $Sing(u, v) = \emptyset$,

    \item[(ii)] if $N = N^{\ast}$ then $Sing(u, v)$ consists of isolated points,

    \item[(ii)] if $N > N^{\ast}$ then $\mathcal{H}^{N-N^{\ast} + s}\left( Sing(u, v) \right) = 0$ for all $s>0$.
\end{itemize}
\end{enumerate}
\end{theorem}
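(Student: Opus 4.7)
I would follow the standard two-step regularity scheme: an $\varepsilon$-regularity theorem at flat points, giving (A), and a Federer-type dimension reduction on the complement, giving (B). By Lemma \ref{lemma:TwoPhaseBlowUpProportional} the blow-ups of $u$ and $v$ at any two-phase point are proportional, so a regular point is characterized by the existence of $\alpha\geq\beta\geq 0$ with $\alpha^2+\beta^2=1$ and a unit direction $\nu$ such that $(u,v)$ is close to the half-plane solution
\[
H_{\alpha,\beta,\nu}(x)=\bigl(\alpha\,(x\cdot\nu)_+,\ \beta\,(x\cdot\nu)_+\bigr),
\]
the constraint $\alpha^2+\beta^2=1$ being forced by the two-phase Bernoulli condition.

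For (A), the core is an improvement-of-flatness step: if $(u,v)$ is $\varepsilon$-close to $H_{\alpha,\beta,\nu}$ in $B_1$, with $\varepsilon\le\varepsilon_0(\alpha,\beta)$, then in $B_{r_0}$ it is $r_0^{1+\gamma}\varepsilon$-close to a new half-plane solution $H_{\alpha',\beta',\nu'}$, for every $\gamma<1/2$. I would argue by contradiction and compactness: after rescaling, a limit pair $(\tilde u,\tilde v)$ emerges that solves a linearized transmission problem on the half-ball $B_1\cap\{x\cdot\nu>0\}$, namely two harmonic functions satisfying the weighted Neumann condition $\alpha\,\partial_\nu\tilde u+\beta\,\partial_\nu\tilde v=0$ on the flat face (obtained from linearizing $|\nabla u|^2+|\nabla v|^2=1$) and inheriting the ordering $\tilde u\geq \tilde v$. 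Sharp boundary $C^{1,1/2}$ regularity for this linear system drives the flatness exponent up to $1/2$. The mechanism setting the threshold precisely at $1/2$ is the cusp-forming behaviour at two-phase points (as in the two-phase Bernoulli problem of \cite{DePhilippisSpolaorVelichkov2021:TwoPhaseBernoulli}): $\partial\{u>0\}$ and $\partial\{v>0\}$ are tangent at $x_0$, with their separation of order $|x-x_0|^{3/2}$, which forces $C^{1,1/2}$ as the best possible modulus for the unit normal. The flatness iteration must be prepared by a partial Harnack inequality in the spirit of De Silva, carried out jointly for the pair $(u,v)$ so that oscillations propagate across the one-phase and two-phase parts of the boundary simultaneously.

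The main obstacle I anticipate is precisely this joint treatment: the impenetrability $u\geq v$, together with the shared two-phase Bernoulli condition, forbids reducing to two independent one-phase problems. In particular, the linearized Neumann condition is degenerate along the family $\alpha\partial_\nu \tilde u+\beta\partial_\nu \tilde v\equiv 0$, and one must extract $C^{1,1/2}$ estimates for this weighted problem while simultaneously controlling the rotation of $\nu$ across scales. I expect this to require a Weiss-type monotonicity formula to select a unique half-plane blow-up and an Almgren-type frequency function for the linearized system to quantify the spectral gap between the half-plane mode and higher modes.

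Finally, for (B), once $C^{1,\alpha}$ regularity at regular points is established, $Sing(u,v)$ is a closed subset of the two-phase boundary. Combining the Weiss monotonicity formula with the proportionality of two-phase blow-ups from Lemma \ref{lemma:TwoPhaseBlowUpProportional}, any blow-up at a singular two-phase point is a homogeneous cone of the form $(\alpha W,\beta W)$, where $W$ is a one-phase minimizer with non-flat free boundary. Singularities of $(u,v)$ therefore correspond bijectively to singularities of classical one-phase minimizers, so the critical dimension is the classical $N^\ast\in\{5,6,7\}$ and the Federer dimension reduction, together with the known stratification of one-phase singular cones, yields the cardinality and Hausdorff bounds in (i)--(iii) verbatim from the one-phase theory.
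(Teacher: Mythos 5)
Your proposal is correct and follows essentially the same route as the paper: partial Harnack, compactness, linearization to a constrained boundary-value problem on the half-ball whose $C^{1,1/2}$ (Signorini-type) regularity caps the flatness gain at every $\alpha<1/2$, and then dimension reduction for (B) using the proportionality of two-phase blow-ups to reduce all singular cones to one-phase ones. The only imprecision is in the limit problem: the weighted condition $\Lambda_u\partial_\nu\tilde u+\Lambda_v\partial_\nu\tilde v=0$ holds only on the contact set $\{\tilde u=\tilde v\}$ of the linearized membranes, while on $\{\tilde u>\tilde v\}$ each function satisfies its own Neumann condition (this is exactly the two-membrane system of Definition \ref{def:ViscousOneSidedTwoMembranePb}); the paper also splits the analysis into a branching regime (slopes near $\sqrt{\Lambda_u},\sqrt{\Lambda_v}$) and a coincidence regime (slopes bounded away, forcing the boundaries to coincide and yielding a pure transmission limit problem), a dichotomy your sketch elides but which is needed to run the Harnack iteration uniformly.
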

The whole section \ref{section:Regularity} is dedicated to the proof of Theorem \ref{thm:C1aRegularity}. The second result is a refinement of Theorem \ref{thm:C1aRegularity}, which deals with the sharp regularity for the free boundaries , and is studied in section \ref{section:SharpRegularity}.
\begin{theorem}\label{thm:SharpRegularity}
Let (u, v) be a viscosity solution of problem \eqref{eqn:MainViscousPb}. Then,
the sets $\partial\{ u>0 \} \cap B_1$ and $\partial\{ v>0 \} \cap B_1$ are of class $C^{1, 1/2}$ in a neighborhood of any $x \in Reg(u, v)$. 
\end{theorem}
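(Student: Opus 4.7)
\textbf{Proof plan for Theorem \ref{thm:SharpRegularity}.}
The plan is to upgrade the $C^{1,\alpha}$ regularity from Theorem \ref{thm:C1aRegularity} (valid for every $\alpha<1/2$) to the sharp exponent $1/2$ at regular points via an Almgren-type frequency argument combined with an improvement-of-flatness at the optimal rate. At a one-phase regular point (blow-up of the form $(\sqrt{\Lambda_u}(x_N)_+,0)$, or a point on $\partial\{v>0\}$ sitting in the interior of $\{u>0\}$ where $u$ is already smooth), the classical regularity theory for the one-phase Alt--Caffarelli problem \cite{AltCaffarelli:OnePhaseFreeBd,DeSilva:FreeBdRegularityOnePhase} gives $C^\infty$ free boundaries; the substantive case is therefore a two-phase regular point $x_0\in\partial\{u>0\}\cap\partial\{v>0\}$, which after translation and rotation we put at the origin with normalized blow-up
\[
(u_0,v_0)(x)=\bigl(\alpha (x_N)_+,\,\beta (x_N)_+\bigr),\qquad \alpha^2+\beta^2=1,\ \alpha,\beta>0.
\]

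First I would use Theorem \ref{thm:C1aRegularity} to straighten both free boundaries simultaneously by a single $C^{1,\alpha}$ diffeomorphism $\Phi$ — this is possible precisely because at a two-phase regular point both $\partial\{u>0\}$ and $\partial\{v>0\}$ are tangent to the same hyperplane $\{x_N=0\}$. In the new coordinates the pair $(\tilde u,\tilde v)\coloneqq (u,v)\circ\Phi^{-1}$ solves a uniformly elliptic divergence-form system with $C^{0,\alpha}$ coefficients in $B_1^+\coloneqq B_1\cap\{x_N>0\}$, vanishes on the flat face $\{x_N=0\}$, and satisfies the linearized two-phase identity $\alpha\,\partial_{x_N}\tilde u+\beta\,\partial_{x_N}\tilde v=1+O(|x|^{\alpha})$ on that face.

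The core of the argument is an Almgren-type frequency analysis for the normalized residual
\[
W_r(x)\coloneqq \frac{1}{r^{3/2}}\Bigl(\tilde u(rx)-\alpha r (x_N)_+,\ \tilde v(rx)-\beta r (x_N)_+\Bigr).
\]
Combining the $C^{1,\alpha}$ regularity of $\Phi$ with a Weiss-type monotonicity formula at scale $r$, I would show the family $\{W_r\}$ is uniformly bounded in $L^\infty$ as $r\to 0^+$. A contradiction/blow-up argument then produces, at any scale where this bound fails, a non-trivial harmonic pair $(U,V)$ on the upper half-space, vanishing on $\{x_N=0\}$, tangent to $0$ to strictly higher than $3/2$ order, and satisfying the linearized Bernoulli constraint $\alpha\partial_{x_N}U+\beta\partial_{x_N}V=0$. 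The classification of such profiles, performed via a scalar Almgren frequency for the combination $\beta U-\alpha V$ extended by odd reflection across $\{x_N=0\}$, forces the minimal admissible homogeneity to be exactly $3/2$ — the lowest degree compatible with both the odd-reflection mechanism and the linearized free boundary condition in the presence of the residual cusp perturbation — contradicting the assumed strict $>3/2$ tangency. This closes the argument and yields $\|(\tilde u,\tilde v)-(\alpha (x_N)_+,\beta (x_N)_+)\|_{L^\infty(B_r)}=O(r^{3/2})$, which by a standard boundary Harnack/blow-up parametrization translates into the sharp $C^{1,1/2}$ regularity of both $\partial\{u>0\}$ and $\partial\{v>0\}$ near $x_0$.

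The main technical obstacle is the treatment of the cusp region $\{\tilde u>0=\tilde v\}\subset B_1^+$, where $\tilde v$ vanishes identically and the linearized two-phase condition must be replaced by the one-phase condition $|\nabla\tilde u|^2=\Lambda_u$. There the standard odd-reflection trick used in the classification of $3/2$-profiles is broken, and the linearization of the boundary condition changes abruptly. I would control this by exploiting the quantitative estimate provided by Theorem \ref{thm:C1aRegularity}: the inner boundary $\partial\{v>0\}$ lies within $C r^{1+\alpha}$ of $\partial\{u>0\}$ inside $B_r$, so the cusp has $x_N$-height at most $C r^{1+\alpha}$ over the base $B_r\cap\{x_N=0\}$, contributing to the energy numerator and the boundary denominator of the Almgren quotient at an order $r^{\alpha}$ smaller than the leading $r^{3}$ behavior. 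A careful perturbative bookkeeping then preserves the monotonicity of the frequency up to a multiplicative factor $e^{Cr^{\alpha}}$, enough to pin down the sharp lower bound $\mathcal{N}(0^+)\ge 3/2$ and conclude.
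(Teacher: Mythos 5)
Your overall architecture coincides with the paper's: use the almost-optimal $C^{1,\beta}$ regularity of Theorem \ref{thm:C1aRegularity} as input, set up an Almgren frequency for the residuals obtained by subtracting the linear blow-up, prove almost-monotonicity by showing that the boundary error terms are of order $r^{\beta}$ better than the leading terms, extract homogeneous blow-up limits, bound the limiting homogeneity from below by $3/2$, and convert $H(r)\le Cr^{3}$ into $C^{1,1/2}$ regularity. The quantitative bookkeeping you describe for the cusp (the inner boundary lying within $Cr^{1+\beta}$ of the outer one, contributing errors of relative order $r^{\beta}$ to the frequency) is exactly how the paper controls the terms $A(r),\dots,E(r)$ in Sections \ref{subsec:FrequencyFormula}--\ref{subsec:FrequencyAlmostMonotonicity}.

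The genuine gap is in the identification of the limiting problem, and it propagates into the classification step. You cannot flatten $\partial\{u>0\}$ and $\partial\{v>0\}$ simultaneously with a single diffeomorphism: at a two-phase regular point the two boundaries are tangent but generically separate, and the branching set is precisely where the relevant phenomenon occurs. Consequently the blow-up limits of the residuals do \emph{not} vanish on $\{x_N=0\}$; they solve the one-sided two-membrane problem \eqref{eqn:ViscousOneSidedTwoMembranePb}, in which the limits satisfy Signorini-type complementarity conditions ($h\ge w$, $\partial_N h\le 0\le \partial_N w$, with the Neumann-type transmission condition only on the contact set $\{h=w\}$). The $3/2$ lower bound on the homogeneity is then the optimal Signorini homogeneity of the difference $h-w$ (the sum $\Lambda_u h+\Lambda_v w$ satisfies a pure Neumann condition and contributes homogeneity $\ge 2$); this is the content of Lemma \ref{lemma:FrequencyBdFromBelow}(v). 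In your setup, where $(U,V)$ are harmonic in the half-space, vanish on all of $\{x_N=0\}$ and are extended by odd reflection, each component would be entire harmonic with integer homogeneity, and the tangency condition would force homogeneity $\ge 2$ — i.e.\ your argument, taken at face value, proves $C^{1,1}$, which is too strong and false (the $3/2$-homogeneous Signorini profiles are genuine blow-ups at branching points). Relatedly, the cusp region cannot be absorbed as an $O(r^{\beta})$ perturbation in the classification: while it is lower order in the monotonicity estimates, in the blow-up it survives as the contact/non-contact dichotomy that produces the $3/2$ profile. To close the argument you must replace the Dirichlet/odd-reflection picture by the two-membrane linearization and invoke the thin obstacle classification, as in Remark \ref{rmk:ViscousLimitPbsRegularity} and the proof of Lemma \ref{lemma:FrequencyBdFromBelow}.
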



The outline of the proofs of these two main theorems is given in the beginning of the sections \ref{section:Regularity} and \ref{section:SharpRegularity}.\medskip

Finally, we remark that the ordering condition $u\ge v$ is only used to guarantee existence and non degeneracy for the variational problem \eqref{eqn:MinimumPbExplicit}, while all the analysis of the free boundary regularity is carried out only under the weaker assumptions 
\begin{equation}\label{eqn:WeakOrderingCond}
\{ v>0 \} \subseteq \{ u>0 \} \text{ a.e. in } B_1 \quad \text{and} \quad \Lambda_u, \Lambda_v >0 \text{ (not necessarily ordered)}.
\end{equation}
%

\section{Preliminaries}\label{section:preliminaries}
In this section we recall the main properties of solutions $u, v$ to the variational problem \eqref{eqn:MinimumPbExplicit}. More precisely, in section \ref{subsection:LipschitzAndNonDegenerate} we deduce the local Lipschitz regularity and non-degeneracy properties of $u$ and $v$ while in section \ref{subsection:BlowUp} we study the blow-up limits of the solutions at their free boundary points, in the spirit of \cite{AltCaffarelli:OnePhaseFreeBd}.

Then, in section \ref{subsection:ViscositySetting} we introduce the definition of viscosity solution for problem \eqref{eqn:MainViscousPb} and we show that variational solutions of problem \eqref{eqn:MinimumPbExplicit} are actually viscous solutions of problem \eqref{eqn:MainViscousPb}.

\subsection{Lipschitz regularity and non-degeneracy}\label{subsection:LipschitzAndNonDegenerate}
To begin with, we recall the following property.
\begin{lemma}\label{lemma:SolutionsAreSubharmonic}
    Let $u$ and $v$ solutions to problem \eqref{eqn:MinimumPbExplicit}. Then, in $B_1$ they are subharmonic in the sense of distributions.
\end{lemma}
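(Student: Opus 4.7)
The plan is to establish the subharmonicity of $v$ first, by the standard Alt--Caffarelli perturbation, and then to deduce that of $u$ via a harmonic replacement argument on arbitrary balls. The role of the ordering $u \ge v$ is twofold: it rules out the naïve one-phase perturbation for $u$, but once $v$ is known to be subharmonic it also guarantees that a harmonic replacement of $u$ remains an admissible competitor.

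For $v$ I would fix $\varphi \in C_c^\infty(B_1)$ with $\varphi \ge 0$ and use $\tilde v_\varepsilon := (v - \varepsilon \varphi)_+$ for small $\varepsilon > 0$. Since $\tilde v_\varepsilon \le v \le u$, the couple $(u, \tilde v_\varepsilon)$ lies in $\mathcal{A}$, and the inclusion $\{\tilde v_\varepsilon > 0\} \subseteq \{v > 0\}$ makes the Lebesgue term of $\mathcal{F}$ non-increasing. Minimality then gives
\[
\int_{B_1} |\nabla v|^2 \le \int_{\{v > \varepsilon \varphi\}} |\nabla v - \varepsilon \nabla \varphi|^2 ;
\]
expanding the square, dividing by $\varepsilon > 0$ and letting $\varepsilon \to 0^+$ (using that $\nabla v = 0$ almost everywhere on $\{v = 0\}$) yields $\int \nabla v \cdot \nabla \varphi \le 0$, which is the distributional subharmonicity.

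For $u$ the direct analogue fails: the admissible perturbation $\max\{u - \varepsilon \varphi, v\}$ produces, after expanding the gradient, a contribution concentrated on the strip $\{0 < u - v \le \varepsilon \varphi\}$ of order $\varepsilon$ (and not $o(\varepsilon)$), which does not vanish after dividing by $\varepsilon$. This is the main obstacle. To bypass it, I would fix an arbitrary ball $B \Subset B_1$, let $h$ be the harmonic function in $B$ with trace $u|_{\partial B}$, and use
\[
\tilde u := \min\{u, h\} \text{ in } B, \qquad \tilde u := u \text{ in } B_1 \setminus B,
\]
as competitor. The maximum principle gives $h \ge 0$, and the already-established subharmonicity of $v$, together with $v \le u = h$ on $\partial B$, gives $h \ge v$ in $B$; hence $\tilde u \ge v$ and $(\tilde u, v) \in \mathcal{A}$. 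The inclusion $\{\tilde u > 0\} \subseteq \{u > 0\}$ makes the Lebesgue term non-increasing, so minimality forces $\int_{B \cap \{u > h\}} |\nabla u|^2 \le \int_{B \cap \{u > h\}} |\nabla h|^2$. On the other hand, testing $\Delta h = 0$ against $(u - h)_+ \in H^1_0(B)$ gives $\int_{B \cap \{u > h\}} \nabla h \cdot \nabla u = \int_{B \cap \{u > h\}} |\nabla h|^2$; combining the two, $\int_{B \cap \{u > h\}} |\nabla(u - h)|^2 \le 0$, and since $(u - h)_+ \in H_0^1(B)$ this forces $u \le h$ in $B$. Since $B$ was arbitrary, $u$ is subharmonic.
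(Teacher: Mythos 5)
Your proof is correct and follows essentially the same route as the paper, which simply cites the standard one-phase argument (\cite[Lemma 2.6]{Velichkov:RegularityOnePhaseFreeBd}): an inward perturbation $(v-\varepsilon\varphi)_+$ for $v$ and a harmonic replacement from above for $u$. The only genuinely new ingredient required by the constrained setting --- and which you supply correctly --- is the admissibility of the competitors under $u\ge v$, in particular that the harmonic replacement $h$ of $u$ on a ball dominates $v$ because $v$ is already known to be subharmonic, which is exactly why $v$ must be treated first.
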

For the proof see e.g. \cite[Lemma 2.6]{Velichkov:RegularityOnePhaseFreeBd}. As a consequence of Lemma \ref{lemma:SolutionsAreSubharmonic} we have that $\Delta u$ and $\Delta v$ define positive Radon measures on $B_1$. We proceed with the following laplacian estimates.
\begin{lemma}\label{lemma:LaplacianEstimates}
Let $u$ and $v$ solutions to problem \eqref{eqn:MinimumPbExplicit}. Then, there exists a universal constant $C > 0$ such that
\begin{equation}\label{eqn:LaplacianEstimateThesis}
    \Delta u \left( B_r(x) \right) \le C r^{N-1} \quad \text{and} \quad \Delta v \left( B_r(x) \right) \le C r^{N-1}
\end{equation}
for every ball $B_r(x)$ with $B_{2r}(x) \subseteq B_1$.
\end{lemma}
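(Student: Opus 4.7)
My plan is to compare $(u,v)$ with the pair of harmonic replacements in a slightly larger ball and then convert the resulting Dirichlet energy estimate into a measure bound through a cutoff argument. Specifically, fix $x\in B_1$ with $B_{2r}(x)\subseteq B_1$ and let $\tilde u$, $\tilde v$ be the functions that are harmonic in $B_{2r}(x)$ with boundary data $u$, $v$ on $\partial B_{2r}(x)$ and that coincide with $u$, $v$ outside. The crucial remark is that, since $u\ge v$ on $\partial B_{2r}(x)$, the maximum principle applied to the harmonic function $\tilde u-\tilde v$ gives $\tilde u\ge\tilde v$ throughout $B_{2r}(x)$, so the pair $(\tilde u,\tilde v)$ lies in $\mathcal{A}$ and is a legitimate competitor.

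Once admissibility is in place, the minimality $\mathcal{F}(u,v)\le\mathcal{F}(\tilde u,\tilde v)$ yields a Dirichlet energy estimate: the measure terms on the right hand side are trivially bounded by $(\Lambda_u+\Lambda_v)|B_{2r}|\le Cr^N$, while on the left hand side one uses the orthogonality
\[
\int_{B_{2r}(x)}|\nabla u|^2\ =\ \int_{B_{2r}(x)}|\nabla\tilde u|^2+\int_{B_{2r}(x)}|\nabla(u-\tilde u)|^2,
\]
valid because $u-\tilde u\in H^1_0(B_{2r}(x))$ and $\tilde u$ is harmonic (and analogously for $v$). Combining, one obtains
\[
\int_{B_{2r}(x)}|\nabla(u-\tilde u)|^2+\int_{B_{2r}(x)}|\nabla(v-\tilde v)|^2\ \le\ Cr^N.
\]

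To pass from this Dirichlet bound to the measure bound, I would pick a cutoff $\eta\in C_c^\infty(B_{2r}(x))$ with $\eta\equiv 1$ on $B_r(x)$, $0\le\eta\le 1$, and $|\nabla\eta|\le C/r$. Since $\Delta\tilde u=0$ in $B_{2r}(x)$, we have $\Delta u=\Delta(u-\tilde u)$ as positive Radon measures there, so that
\[
\Delta u\bigl(B_r(x)\bigr)\ \le\ \int_{B_{2r}(x)}\eta\,d\Delta u\ =\ -\int_{B_{2r}(x)}\nabla\eta\cdot\nabla(u-\tilde u)
\]
by the very definition of the distributional Laplacian (which extends to $\eta\in C^\infty_c$ paired with $u-\tilde u\in H^1_0$). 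Cauchy-Schwarz and the previous energy bound then give $\Delta u(B_r(x))\le (C/r)\,|B_{2r}|^{1/2}(Cr^N)^{1/2}\le Cr^{N-1}$, and the identical computation with $v-\tilde v$ produces the estimate for $\Delta v$.

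The only delicate point, and the one genuinely specific to our coupled problem, is the admissibility of the competitor: the naive choice of replacing $v$ alone by its harmonic extension $\tilde v$ does not work, because the constraint $u\ge\tilde v$ may fail in the interior of $B_{2r}(x)$. Replacing both phases simultaneously and invoking the maximum principle bypasses this issue; beyond that, the proof reduces to the standard one-phase Laplacian mass estimate as in \cite{AltCaffarelli:OnePhaseFreeBd,Velichkov:RegularityOnePhaseFreeBd}.
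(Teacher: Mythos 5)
Your proof is correct and follows essentially the same route as the paper, which simply invokes the harmonic-replacement argument of \cite[Lemma 3.9]{Velichkov:RegularityOnePhaseFreeBd} applied to both components at once to bound $(\Delta u+\Delta v)(B_r(x))$ and then splits using positivity of the two measures. Your write-up usefully makes explicit the one point specific to this problem — that replacing $u$ and $v$ simultaneously keeps the competitor in $\mathcal{A}$ by the maximum principle — but the underlying mechanism (energy comparison, orthogonality, cutoff plus Cauchy--Schwarz) is identical.
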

\begin{proof}
    Proceeding as in \cite[Lemma 3.9]{Velichkov:RegularityOnePhaseFreeBd} one can show that
    \[
    \left( \Delta u + \Delta v \right)\left( B_r(x) \right) \le C r^{N-1}.
    \]
    Since $\Delta u$ and $\Delta v$ are both positive Radon measures, \eqref{eqn:LaplacianEstimateThesis} follows.
\end{proof}
As a corollary of Lemma \ref{lemma:LaplacianEstimates}, we have the following local Lipschitz regularity result.
\begin{corollary}\label{crl:localLipschitzReg}
    Let $u$ and $v$ solutions to problem \eqref{eqn:MinimumPbExplicit}. Then, they are locally Lipschitz continuous in $B_1$.
\end{corollary}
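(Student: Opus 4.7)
The plan is to derive local Lipschitz regularity from the Laplacian measure bound of Lemma \ref{lemma:LaplacianEstimates} via a Green function decomposition, following the classical scheme used for the one-phase Bernoulli problem (see \cite{AltCaffarelli:OnePhaseFreeBd} and \cite{Velichkov:RegularityOnePhaseFreeBd}). The argument is identical for $u$ and $v$, so I focus on $u$.

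First I would show that $u$ is locally bounded on $B_1$. Since $(u_0, v_0) \in \mathcal{A}$ is an admissible competitor, minimality yields $\|u\|_{H^1(B_1)} \le C$, where $C$ depends only on the data. Combined with subharmonicity (Lemma \ref{lemma:SolutionsAreSubharmonic}), the mean value inequality $u(x) \le \fint_{B_\rho(x)} u\,dy$ upgrades this to $\|u\|_{L^\infty_{\loc}(B_1)} \le C$ on any compactly contained subdomain.

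Next, fix a ball $B_{2r}(x_0) \Subset B_1$ and decompose $u = h + w$ on $B_r(x_0)$, where $h$ is the harmonic function with $h = u$ on $\partial B_r(x_0)$ and $w$ solves $\Delta w = \Delta u$ in $B_r(x_0)$ with $w = 0$ on $\partial B_r(x_0)$. Since $\Delta u \ge 0$, the maximum principle gives $w \le 0$ and hence $0 \le u \le h$ on $B_r(x_0)$, so $\|h\|_{L^\infty(B_r(x_0))} \le \|u\|_{L^\infty(B_r(x_0))} \le C$. Standard interior gradient estimates for harmonic functions then yield
\[
\|\nabla h\|_{L^\infty(B_{r/2}(x_0))} \le \frac{C}{r} \|h\|_{L^\infty(B_r(x_0))} \le \frac{C}{r}.
\]
For the potential part, the Green function representation gives
\[
|\nabla w(x)| \le C \int_{B_r(x_0)} |x - y|^{1-N}\,d(\Delta u)(y) \qquad \text{for } x \in B_{r/2}(x_0).
\]
A dyadic decomposition of the ball centered at $x$, combined with the bound $\Delta u(B_\rho(x)) \le C \rho^{N-1}$ from Lemma \ref{lemma:LaplacianEstimates}, controls the last integral:
\[
\int_{B_r(x_0)} |x-y|^{1-N}\,d(\Delta u)(y) \le \sum_{k \ge 0} (2^{-k-1} r)^{1-N} \, \Delta u(B_{2^{-k} r}(x)) \le C \sum_{k \ge 0} 2^{-k} \le C.
\]
Putting both estimates together gives $\|\nabla u\|_{L^\infty(B_{r/2}(x_0))} \le C/r + C$, proving the local Lipschitz bound.

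The main technical obstacle is the Riesz potential estimate above; everything else (subharmonicity, $L^\infty$ bound, harmonic replacement, interior Schauder-type estimates) is standard. The sharpness of the exponent $N-1$ in the Laplacian bound is exactly what makes the dyadic sum telescope to a geometric series, so the Lipschitz estimate depends crucially on Lemma \ref{lemma:LaplacianEstimates} rather than a weaker polynomial bound.
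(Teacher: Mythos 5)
Your overall plan (subharmonicity, local $L^\infty$ bound, harmonic replacement, and an estimate driven by Lemma \ref{lemma:LaplacianEstimates}) is the right family of arguments --- the paper simply defers to \cite[Section 3]{Velichkov:RegularityOnePhaseFreeBd}, which proceeds along these lines --- but the step you single out as the ``main technical obstacle,'' the Riesz potential estimate, is wrong, and it fails exactly at the critical exponent. On the $k$-th dyadic annulus $B_{2^{-k}r}(x)\setminus B_{2^{-k-1}r}(x)$ you have $|x-y|^{1-N}\le (2^{-k-1}r)^{1-N}$, while Lemma \ref{lemma:LaplacianEstimates} gives $\Delta u\bigl(B_{2^{-k}r}(x)\bigr)\le C(2^{-k}r)^{N-1}$; the product is $C\,2^{N-1}$, a constant \emph{independent of $k$}, not $C2^{-k}$. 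The exponents $1-N$ and $N-1$ cancel exactly, so the dyadic sum has a bounded contribution at every scale and diverges logarithmically. (Equivalently, the layer-cake formula gives $\int_{B_r}|x-y|^{1-N}\,d\mu(y)=(N-1)\int_0^{\infty}\mu(B_s(x))\,s^{-N}\,ds$, and the bound $\mu(B_s)\le Cs^{N-1}$ only yields $\int_0^r C s^{-1}\,ds=\infty$.) The $(N-1)$-density bound is the \emph{borderline} case for this potential: it shows $w$ is log-Lipschitz (hence $C^{0,\alpha}$ for every $\alpha<1$), not Lipschitz, so your last display does not close the argument; your remark that the sharpness of the exponent $N-1$ makes the series geometric is precisely backwards.

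The repair --- and the actual proof in \cite{Velichkov:RegularityOnePhaseFreeBd} and \cite{AltCaffarelli:OnePhaseFreeBd} --- uses the Laplacian bound in integrated form together with the harmonicity of $u$ \emph{inside} $\{u>0\}$. From $\frac{d}{ds}\fint_{\partial B_s(z)}u = c_N\,s^{1-N}\,\Delta u(B_s(z))$ and Lemma \ref{lemma:LaplacianEstimates} one gets, at a free boundary point $z$ (where $u(z)=0$), that $\fint_{\partial B_s(z)}u\le Cs$: here the relevant integral is $\int_0^s \sigma^{1-N}\cdot C\sigma^{N-1}\,d\sigma=Cs$, which converges, unlike your Riesz potential. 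Combining this with $u\le h$ (the harmonic majorant) and the Poisson kernel estimate yields the linear growth $\sup_{B_{s/2}(z)}u\le Cs$. Then for $x_0\in\{u>0\}$ with $d=\dist(x_0,\partial\{u>0\})$ small, $u$ is genuinely harmonic on $B_d(x_0)$, so the interior gradient estimate gives $|\nabla u(x_0)|\le \frac{C}{d}\sup_{B_d(x_0)}u\le C$, and the Lipschitz bound follows. Your harmonic replacement on an arbitrary ball cannot substitute for this, since near the free boundary the potential part $w$ is not Lipschitz, by the computation above. A minor additional slip: the inequality $0\le u\le h$ gives a lower bound for $h$; the bound $\|h\|_{L^{\infty}(B_r)}\le\|u\|_{L^{\infty}(B_r)}$ follows from the maximum principle applied to $h$ with boundary datum $u$, not from $u\le h$.
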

For the proof of Corollary \ref{crl:localLipschitzReg} we refer e.g. to \cite[Section 3]{Velichkov:RegularityOnePhaseFreeBd}. Now we turn to the non-degeneracy property.
\begin{lemma}\label{lemma:SolutionsNonDegeneracy}
    Let $u$ and $v$ solutions to problem \eqref{eqn:MinimumPbExplicit}. Then, there exists a constant $c>0$ depending only on $\Lambda_u$, $\Lambda_v$ and the dimension $N$ such that
\begin{align}
    & \| u \|_{L^{\infty}(B_r(x))} \ge cr \quad \text{for all } x \in  \overline{\Omega_u} \cap B_1 \text{ and } B_r(x) \subseteq B_1, \label{eqn:NonDegeneracyUThesis}\\
    & \| v \|_{L^{\infty}(B_r(y))} \ge cr \quad \text{for all } y \in  \overline{\Omega_v} \cap B_1 \text{ and } B_r(x) \subseteq B_1. \label{eqn:NonDegeneracyVThesis}
\end{align}
\end{lemma}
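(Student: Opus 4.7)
The plan is to deduce Lemma \ref{lemma:SolutionsNonDegeneracy} from the classical one-phase Alt--Caffarelli non-degeneracy theorem (e.g.\ \cite[Theorem~3.18]{Velichkov:RegularityOnePhaseFreeBd}), exploiting the structural observation that the ordering constraint in $\Acal$ never obstructs the kind of comparison arguments used to prove non-degeneracy.

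First I would prove \eqref{eqn:NonDegeneracyVThesis}. Fix $y \in \overline{\Omega_v} \cap B_1$ with $B_r(y) \subseteq B_1$. The key point is that any $\tilde v$ with $0 \le \tilde v \le v$ in $B_1$ and $\tilde v = v$ outside a compact subset of $B_1$ yields a pair $(u,\tilde v) \in \Acal$: indeed $\tilde v \le v \le u$ preserves the ordering, $\tilde v \ge 0$, and the boundary datum is unchanged. Consequently, any \emph{downward} one-phase competitor for $v$ is admissible, and the classical Alt--Caffarelli proof of non-degeneracy applies verbatim to the one-phase functional $\int|\nabla v|^2 + \Lambda_v|\{v>0\}|$: assuming by contradiction that $\sup_{B_r(y)} v < \eps r$ for $\eps$ small, one constructs a competitor $\tilde v$ (for instance $\tilde v(x) = \min\bigl(v(x),\,2\eps(|x-y|-r/2)^+\bigr)$, or the harmonic-barrier variant) that vanishes on $B_{r/2}(y)$, coincides with $v$ outside $B_r(y)$, and has $|\nabla \tilde v| \le C\eps$ in the annulus; comparing $\Fcal(u,\tilde v) \ge \Fcal(u,v)$ together with the Lipschitz estimate of Corollary \ref{crl:localLipschitzReg} leads to a contradiction for $\eps = \eps(\Lambda_v,N)$ sufficiently small. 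This gives \eqref{eqn:NonDegeneracyVThesis} with $c = c(\Lambda_v,N) > 0$.

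Next I would deduce \eqref{eqn:NonDegeneracyUThesis} by a dichotomy. Let $x \in \overline{\Omega_u} \cap B_1$ with $B_r(x) \subseteq B_1$. If there exists $z \in \overline{\Omega_v} \cap B_{r/2}(x)$, then $B_{r/2}(z) \subseteq B_r(x) \subseteq B_1$, and \eqref{eqn:NonDegeneracyVThesis} applied at $z$ with radius $r/2$ gives $\|v\|_{L^\infty(B_{r/2}(z))} \ge c(r/2)$; the pointwise ordering $u \ge v$ immediately promotes this to $\|u\|_{L^\infty(B_r(x))} \ge cr/2$. If instead $\overline{\Omega_v} \cap B_{r/2}(x) = \emptyset$, then $v \equiv 0$ in $B_{r/2}(x)$, so for any nonnegative $\tilde u$ coinciding with $u$ outside $B_{r/2}(x)$ one has $\tilde u \ge 0 = v$ in $B_{r/2}(x)$ and $\tilde u = u \ge v$ elsewhere, hence $(\tilde u, v) \in \Acal$; the ordering is thus inactive for such perturbations, and the classical one-phase Alt--Caffarelli non-degeneracy applied to $u$ with parameter $\Lambda_u$ on $B_{r/2}(x)$ yields $\|u\|_{L^\infty(B_{r/2}(x))} \ge cr/2$. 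Up to relabelling $c$, this completes the proof.

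The only substantive step is the first one: recognizing that the classical one-phase non-degeneracy argument uses only competitors $\tilde v \le v$, which are automatically admissible here thanks to the ordering. Once this is in place, the non-degeneracy of $u$ follows by a routine dichotomy, using either the already-established non-degeneracy of $v$ (combined with $u \ge v$) or the one-phase argument applied to $u$ where $v$ vanishes.
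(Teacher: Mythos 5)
Your proof is correct and follows essentially the same route as the paper: first establish \eqref{eqn:NonDegeneracyVThesis} by noting that downward competitors $\tilde v \le v$ automatically respect the constraint $u \ge \tilde v$, so the classical Alt--Caffarelli argument applies verbatim; then obtain \eqref{eqn:NonDegeneracyUThesis} by the same dichotomy the paper uses (your case split on whether $\overline{\Omega_v}$ meets $B_{r/2}(x)$ is just the paper's split $r \ge 2d$ versus $r < 2d$ with $d = \dist(x,\overline{\Omega_v})$), using $u \ge v$ plus the non-degeneracy of $v$ in the first case and the unconstrained one-phase argument where $v \equiv 0$ in the second. The only cosmetic caveat is that your explicit competitor $\min\bigl(v,\,2\eps(|x-y|-r/2)^+\bigr)$ does not quite glue to $v$ across $\partial B_r(y)$, but the harmonic-barrier variant you also mention handles this, so there is no substantive gap.
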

\begin{proof}
    Since $\{ v>0 \} \subseteq \{ u>0 \}$, one can proceed exactly as in \cite[Lemma 4.4]{Velichkov:RegularityOnePhaseFreeBd} to show \eqref{eqn:NonDegeneracyVThesis}. We are left to show \eqref{eqn:NonDegeneracyUThesis}. To this aim let $x \in \partial \{ u>0 \}$, $d \coloneqq \dist(x, \overline{\Omega_v})$ and $y \in \partial \{ v>0 \}$ attaining $d$. Then, for all $r \in (0, d)$ one can still proceed as in \cite[Lemma 4.4]{Velichkov:RegularityOnePhaseFreeBd} and deduce \eqref{eqn:NonDegeneracyVThesis} for all $r \in (0, 2d)$ with $c/2$. For $r\ge 2d$ on can use the condition $u \ge v$ in $B_1$, together with \eqref{eqn:NonDegeneracyVThesis} at $y$.
\end{proof}

\subsection{Blow-up}\label{subsection:BlowUp}
\begin{definition}\label{def:BlowUpLimit}
    Let the functions $u$ and $v$ be solutions of problem \eqref{eqn:MinimumPbExplicit}. Suppose that $x_0 \in \partial \{ u>0 \}$ and let $r_n \to 0$ be a sequence of positive real numbers. Introduce the sequence of functions
    \[
    u_{x_0, r_n}(x) \coloneqq \frac{1}{r_n}(x_0 + r_n x) \quad \text{with } x \text{ such that } x_0 + r_n x \in B_1.
    \]
    We say that a function $u_{\infty}  \in L_{\loc}^{\infty}(\R^N) \cap H_{\loc}^1(\R^N)$ is a blow-up limit at $x_0$ for $u$, if there exists a sequence $0<r_n \to 0$ such that
    \[
    \left\| u_{x_0, r_n}(x) - u_{\infty}(x) \right\|_{L^{\infty}(B_R)} \to 0 \quad \text{for any fixed } R>0.
    \]
    An analogous definition can be given for a blow up limit of $v$ at $y_0 \in \partial \{ u>0 \}$.
\end{definition}
\begin{lemma}\label{lemma:BlowUpProperties}
    Let the functions $u$ and $v$ be solutions of problem \eqref{eqn:MinimumPbExplicit} and $x \in \partial \{ u>0 \} \cup \partial \{ v>0 \}$. Then:
    \begin{itemize}
        \item[(i)] If $x \in \partial \{ u>0 \} \setminus \partial \{ v>0 \}$ there exists a blow-up limit $u_{\infty}$ for $u$ at $x$, which is a solution of the following one-phase Bernoulli problem in $\R^N$
        \[
        \int_{B_R} \vert \nabla u_{\infty} \vert^2 + \Lambda_u \vert \{u_{\infty}>0\} \cap B_R \vert \le \int_{B_R} \vert \nabla (u_{\infty} + \varphi) \vert^2 + \Lambda_u \vert \{u_{\infty} + \varphi >0\} \cap B_R \vert,
        \]
        for all $R >0$ and $\varphi \in H_0^1(B_R)$.

        \item[(ii)] If $x \in \partial \{ v>0 \} \setminus \partial \{ u>0 \}$ there exists a blow-up limit $v_{\infty}$ for $v$ at $x$, which is a solution of the following one-phase Bernoulli problem in $\R^N$
        \[
        \int_{B_R} \vert \nabla v_{\infty} \vert^2 + \Lambda_v \vert \{v_{\infty}>0\} \cap B_R \vert \le \int_{B_R} \vert \nabla (v_{\infty} + \varphi) \vert^2 + \Lambda_v \vert \{v_{\infty} + \varphi >0\} \cap B_R \vert,
        \]
        for all $R >0$ and $\varphi \in H_0^1(B_R)$.

        \item[(iii)] If $x \in \partial \{ u>0 \} \cap \partial \{ v>0 \}$ there exist blow-up limits $u_{\infty}$ and $v_{\infty}$ for $u$ and $v$ at $x$, which are solutions of
        \begin{align}\label{eqn:BlowUpCoincidencePb}
        \begin{split}
        & \int_{B_R} \vert \nabla u_{\infty} \vert^2 + \vert \nabla v_{\infty} \vert^2 + \Lambda_u \vert \{u_{\infty}>0\} \cap B_R \vert + \Lambda_v \vert \{v_{\infty}>0\} \cap B_R \vert \le \\
         &  \int_{B_R} \vert \nabla (u_{\infty} + \varphi)\vert^2 + \vert \nabla (v_{\infty} + \psi )\vert^2 + \Lambda_u \vert \{u_{\infty}+ \varphi>0\} \cap B_R \vert + \Lambda_v \vert \{v_{\infty}+ \psi>0\} \cap B_R \vert
        \end{split}
        \end{align}
        for all $R >0$ and $\varphi$, $\psi \in H_0^1(B_R)$ satisfying $\{v_{\infty}+ \psi>0\} \cap B_R \subseteq \{u_{\infty}+ \varphi>0\} \cap B_R$.
    \end{itemize}
\end{lemma}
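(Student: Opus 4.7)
The plan is to follow the standard Alt--Caffarelli blow-up scheme, with particular care for the inclusion $\{v>0\}\subseteq\{u>0\}$ and for case (iii) where the ordering must be preserved in the limit. The Lipschitz bound of Corollary \ref{crl:localLipschitzReg} gives that the rescaled families $u_{x,r}$ and $v_{x,r}$ are uniformly Lipschitz on any fixed ball $B_R$ for $r$ small enough, and vanish at the origin, so by Ascoli--Arzel\`a and a diagonal argument there exist $u_\infty,v_\infty\in C^{0,1}_{\loc}(\R^N)$ with, along a common subsequence $r_n\to 0$, $u_{x,r_n}\to u_\infty$ and $v_{x,r_n}\to v_\infty$ locally uniformly in $\R^N$. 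The ordering $u_\infty\ge v_\infty\ge 0$ is inherited pointwise from that of $u,v$.

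For case (i), since $x\in\partial\{u>0\}\setminus\partial\{v>0\}$ and $u(x)=v(x)=0$, the point $x$ lies in the interior of $\{v=0\}$, so $v\equiv 0$ on some $B_\rho(x)$. Then on $B_\rho(x)$ the ordering constraint is automatic for any $w\ge 0$, so $u$ is a local minimizer of the classical one-phase Alt--Caffarelli functional $\int|\nabla w|^2+\Lambda_u|\{w>0\}|$. The existence of a blow-up limit that is itself a one-phase minimizer is then classical and follows as in \cite{AltCaffarelli:OnePhaseFreeBd} (see also \cite{Velichkov:RegularityOnePhaseFreeBd}). For case (ii), since $x\in\partial\{v>0\}\setminus\partial\{u>0\}$, one cannot have $u\equiv 0$ near $x$ (otherwise $v\le u=0$ would contradict $x\in\partial\{v>0\}$), so by continuity $u\ge\eps>0$ on some $B_\rho(x)$. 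Rescaling, $u_{x,r}\ge\eps/r\to+\infty$ locally uniformly, so the constraint $v_{x,r}\le u_{x,r}$ is inactive against any bounded perturbation $v_{x,r}+\psi$ with $\psi\in H^1_0(B_R)$, for $r$ sufficiently small. Hence $v_{x,r}$ is effectively a local minimizer of the one-phase functional with parameter $\Lambda_v$, and the same classical argument gives the claim.

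For case (iii), both $u(x)=v(x)=0$, and the ordering constraint must be propagated to the limit. Given $R>0$ and admissible perturbations $\varphi,\psi\in H^1_0(B_R)$ with $\{v_\infty+\psi>0\}\cap B_R\subseteq\{u_\infty+\varphi>0\}\cap B_R$, we construct competitors $h_n, w_n$ for the pair $(u_{x,r_n},v_{x,r_n})$ by gluing $u_\infty+\varphi$ and $v_\infty+\psi$ inside $B_R$ to the original rescaled functions outside via a small transition layer; the inclusion of the positivity sets is preserved by the construction. Plugging these competitors into the minimality inequality \eqref{eqn:MinimumPbExplicit} for $(u,v)$ at scale $r_n$, and passing to the limit yields \eqref{eqn:BlowUpCoincidencePb}. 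To take the limit, I need (a) strong $H^1_{\loc}$ convergence and (b) convergence of the measures $|\{u_{x,r_n}>0\}\cap B_R|$ and $|\{v_{x,r_n}>0\}\cap B_R|$ to those of the limits. For (a), the standard trick is to compare with the harmonic replacements of $u_{x,r_n}$ and $v_{x,r_n}$ in a slightly larger ball, combining weak-$H^1$ lower semicontinuity with the upper bound produced by the minimality inequality to conclude norm convergence. For (b), the inclusion $\{u_\infty>0\}\cap B_R\subseteq\liminf\{u_{x,r_n}>0\}\cap B_R$ is elementary, while the reverse direction uses the non-degeneracy of Lemma \ref{lemma:SolutionsNonDegeneracy} to rule out that a significant portion of $\{u_{x,r_n}>0\}$ shrinks to a set where $u_\infty\equiv 0$.

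The main obstacle is the simultaneous construction of the pair of competitors in case (iii) that preserves the inclusion of positivity sets up to the boundary of $B_R$; the cut-off gluing must be done carefully so that the ordering of the rescaled functions and the ordering of the candidate perturbations combine to give admissible competitors at finite scale. Once this is done, strong $H^1_{\loc}$ convergence and the measure convergence from non-degeneracy are by now fairly standard ingredients in the regularity theory of free boundary systems and allow to pass to the limit in \eqref{eqn:BlowUpCoincidencePb}.
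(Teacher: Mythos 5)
Your overall scheme (uniform Lipschitz bounds, Ascoli--Arzel\`a along a common subsequence, localization in cases (i)--(ii), and gluing plus strong $H^1_{\loc}$ and measure convergence in case (iii)) is the standard Alt--Caffarelli argument, which is exactly what the paper invokes -- it gives no proof of its own and simply refers to \cite{Velichkov:RegularityOnePhaseFreeBd}. Cases (i) and (ii) are fine as sketched.

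There is, however, one genuine gap in case (iii), precisely at the step you flag as ``the main obstacle'' but then resolve only by asserting that ``the inclusion of the positivity sets is preserved by the construction.'' Preserving the inclusion of positivity sets is not enough: the admissible class $\mathcal{A}$ in \eqref{eqn:MinimumPbExplicit} requires the \emph{pointwise} ordering $h\ge w$ a.e., whereas the statement of the lemma allows competitors $(u_\infty+\varphi,\,v_\infty+\psi)$ constrained only by $\{v_\infty+\psi>0\}\subseteq\{u_\infty+\varphi>0\}$. For such a pair one may well have $u_\infty+\varphi< v_\infty+\psi$ on a set of positive measure inside $B_R$, and then your glued competitors $(h_n,w_n)$ are simply not in $\mathcal{A}$, so the minimality inequality cannot be applied to them. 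The missing ingredient is the two-point rearrangement: replace $(\tilde u,\tilde v)\coloneqq(u_\infty+\varphi,\,v_\infty+\psi)$ by $\bigl(\max(\tilde u,\tilde v)^+,\,\min(\tilde u,\tilde v)^+\bigr)$. This does not increase the functional, since $|\nabla\max(\tilde u,\tilde v)|^2+|\nabla\min(\tilde u,\tilde v)|^2=|\nabla\tilde u|^2+|\nabla\tilde v|^2$ a.e., while $\{\max(\tilde u,\tilde v)>0\}=\{\tilde u>0\}\cup\{\tilde v>0\}=\{\tilde u>0\}$ and $\{\min(\tilde u,\tilde v)>0\}=\{\tilde u>0\}\cap\{\tilde v>0\}=\{\tilde v>0\}$ thanks to the assumed set inclusion, so the measure terms are unchanged; moreover outside $B_R$ one has $\tilde u=u_\infty\ge v_\infty=\tilde v$, so the rearranged pair agrees with $(u_\infty,v_\infty)$ there and still lies in the right affine space. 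The rearranged pair is pointwise ordered and nonnegative, and convex combinations of ordered pairs in the transition annulus remain ordered, so the glued competitors become admissible for \eqref{eqn:MinimumPbExplicit}. With this correction the passage to the limit in \eqref{eqn:BlowUpCoincidencePb} goes through as you describe; the remaining ingredients (strong $H^1_{\loc}$ convergence via harmonic replacement and convergence of the measures of the positivity sets via non-degeneracy and density estimates) are standard and correctly identified.
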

For the proof of Lemma \ref{lemma:BlowUpProperties} we refer e.g. to \cite{Velichkov:RegularityOnePhaseFreeBd}. Now we give a property of the gradient of plane blow-ups. 
\begin{lemma}\label{lemma:flatBlowUpsRightGradient}
   Let the functions $u$ and $v$ be solutions of problem \eqref{eqn:MinimumPbExplicit} and $x \in \partial \{ u>0 \} \cup \partial \{ v>0 \}$. Then:
    \begin{itemize}
        \item[(i)] If $x \in \partial \{ u>0 \} \setminus \partial \{ v>0 \}$ and $u_{\infty}$ is a blow-up limit for $u$ at $x$, of the form
        \[
        u_{\infty}(x) = \Gamma x \cdot \nu
        \]
        for some $\Gamma \in \R$ and $\nu \in S^{N-1}$, then
        \[
        \Gamma = \sqrt{\Lambda_u}.
        \]

        \item[(ii)] If $x \in \partial \{ v>0 \} \setminus \partial \{ u>0 \}$ and $v_{\infty}$ is a blow-up limit for $v$ at $x$, of the form
        \[
        v_{\infty}(x) = \Gamma x \cdot \nu
        \]
        for some $\Gamma \in \R$ and $\nu \in S^{N-1}$, then
        \[
        \Gamma = \sqrt{\Lambda_v}.
        \]

        \item[(iii)] If $x \in \partial \{ u>0 \} \cap \partial \{ v>0 \}$ and $u_{\infty}$, $v_{\infty}$ are blow-up limits for $u$ and $v$ at $x$ respectively, of the form
        \begin{equation}\label{eqn:BlowUpCoincidencePlaneHp}
        u_{\infty}(x) = \Gamma_u x \cdot \nu \quad \text{and} \quad v_{\infty}(x) = \Gamma_u x \cdot \nu
        \end{equation}
        for some $\Gamma_u$, $\Gamma_v \in \R$ and $\nu \in S^{N-1}$, then
        \begin{equation}\label{eqn:BlowUpCoincidencePlaneSum}
        \Gamma_u^2 + \Gamma_v^2 = 1 . 
        \end{equation}
    \end{itemize}
\end{lemma}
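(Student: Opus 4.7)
The plan is to prove all three cases by a single inner (domain) variation. Fix $R>0$ and $\phi\in C_c^\infty(B_R)$, and consider, for small $\epsilon\in\R$, the half-plane perturbations
\[
u_\epsilon(x) := \Gamma_u\bigl(x\cdot\nu+\epsilon\phi(x)\bigr)_+,\qquad v_\epsilon(x) := \Gamma_v\bigl(x\cdot\nu+\epsilon\phi(x)\bigr)_+,
\]
where in cases (i)-(ii) only the relevant component is perturbed and the other is identically zero. Since $\phi$ vanishes near $\partial B_R$, both $u_\epsilon$ and $v_\epsilon$ coincide with $u_\infty$ and $v_\infty$ near $\partial B_R$, and so are admissible in the variational inequalities of Lemma \ref{lemma:BlowUpProperties}. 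In case (iii), the positivity sets of $u_\epsilon$ and $v_\epsilon$ coincide (both equal $\{x\cdot\nu>-\epsilon\phi\}$), so the inclusion constraint is automatically preserved, for every sign of $\epsilon$.

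Next I would expand the relevant energy $E_R$ (the left-hand side of \eqref{eqn:BlowUpCoincidencePb} in case (iii), or the one-phase analogue in (i)-(ii)) to first order in $\epsilon$. On the positive set, $|\nabla u_\epsilon|^2 = \Gamma_u^2\bigl(1+2\epsilon\,\partial_\nu\phi+\epsilon^2|\nabla\phi|^2\bigr)$. Combining the layer-cake expansion
\[
\bigl|\{x\cdot\nu>-\epsilon\phi\}\cap B_R\bigr| = \bigl|\{x\cdot\nu>0\}\cap B_R\bigr| + \epsilon\!\int_{\{x\cdot\nu=0\}\cap B_R}\!\phi\,d\mathcal{H}^{N-1}+O(\epsilon^2)
\]
with the divergence identity
\[
\int_{\{x\cdot\nu>0\}\cap B_R}\!\partial_\nu\phi\,dx = -\!\int_{\{x\cdot\nu=0\}\cap B_R}\!\phi\,d\mathcal{H}^{N-1},
\]
which is valid since $\phi$ vanishes near $\partial B_R$, the $u$-contribution to the first-order variation equals $(\Lambda_u-\Gamma_u^2)\epsilon\!\int_{\{x\cdot\nu=0\}}\!\phi\,d\mathcal{H}^{N-1}$, and likewise for $v$. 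Thus
\[
E_R(u_\epsilon,v_\epsilon)-E_R(u_\infty,v_\infty) = \bigl(\Lambda_u+\Lambda_v-\Gamma_u^2-\Gamma_v^2\bigr)\epsilon\!\int_{\{x\cdot\nu=0\}\cap B_R}\!\phi\,d\mathcal{H}^{N-1}+O(\epsilon^2).
\]
By the minimality in Lemma \ref{lemma:BlowUpProperties}(iii), the left-hand side is nonnegative for every small $\epsilon$; since both signs of $\epsilon$ are allowed (the constraint being preserved) and $\phi$ is arbitrary, the first-order coefficient must vanish, giving $\Gamma_u^2+\Gamma_v^2 = \Lambda_u+\Lambda_v = 1$. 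Cases (i)-(ii) follow by the identical computation, with the coefficient reducing to $\Lambda_u-\Gamma^2$ (resp.\ $\Lambda_v-\Gamma^2$); nonnegativity of $\Gamma$ is forced by $u_\infty,v_\infty\ge 0$.

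The calculation itself is elementary; the only conceptual point, which I expect to be the main observation rather than an obstacle, is the coupling of the variations in (iii). Perturbing $u_\infty$ and $v_\infty$ by the \emph{same} shift profile $\phi$ keeps their positivity sets equal and thus frees the inclusion constraint, isolating exactly the quantity $\Gamma_u^2+\Gamma_v^2$. This explains why \eqref{eqn:BlowUpCoincidencePlaneSum} takes the combined Bernoulli form rather than pinning $\Gamma_u$ and $\Gamma_v$ individually: independent variations of $u$ and $v$ must respect the inclusion and hence yield only the one-sided bounds $\Gamma_u\le\sqrt{\Lambda_u}$ and $\Gamma_v\ge\sqrt{\Lambda_v}$.
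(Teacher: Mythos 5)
Your proof is correct and rests on the same key idea as the paper's: perturbing $u_\infty$ and $v_\infty$ by the \emph{same} profile so that their positivity sets remain equal, the inclusion constraint is preserved for both signs of $\epsilon$, and the two-phase minimality collapses to a one-phase condition with constant $\Lambda_u+\Lambda_v$. The only difference is one of packaging: the paper passes to the proportional competitors $\Gamma_u(x_N+\varphi)^+$, $\Gamma_v(x_N+\varphi)^+$ and cites the one-phase result (\cite[Lemma 6.11]{Velichkov:RegularityOnePhaseFreeBd}) for the normalized function $x_N^+$ with weight $1/(\Gamma_u^2+\Gamma_v^2)$, whereas you carry out the corresponding first-variation computation explicitly; your closing remark on the one-sided bounds $\Gamma_u\le\sqrt{\Lambda_u}$, $\Gamma_v\ge\sqrt{\Lambda_v}$ from independent variations matches Remark \ref{rmk:BddGammauvCoincidence}.
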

\begin{proof}
    Points (i) and (ii) follow exactly as in the one-phase case, see e.g. \cite[Lemma 6.11]{Velichkov:RegularityOnePhaseFreeBd}. Concerning point (iii), we notice that under condition \eqref{eqn:BlowUpCoincidencePlaneHp}, problem \eqref{eqn:BlowUpCoincidencePb} implies that (choosing $\varphi = \Gamma_u x_N$ and $\psi = \Gamma_v x_N$)
    \begin{align*}
    & \int_{B_R} \left\vert \nabla x_N \right\vert^2 + \frac{1}{\Gamma_u^2 + \Gamma_v^2} \left\vert \left\{ x_N>0 \right\} \cap B_R \right\vert \le \\
    & \le \int_{B_R} \left\vert \nabla \left( x_N + \varphi \right) \right\vert^2 + \frac{1}{\Gamma_u^2 + \Gamma_v^2} \left\vert \left( x_N + \varphi>0 \right\} \cap B_R \right\vert,  
    \end{align*}
    for all $R >0$ and $\varphi \in H_0^1(B_R)$. Hence \eqref{eqn:BlowUpCoincidencePlaneSum} follows again from e.g. \cite[Lemma 6.11]{Velichkov:RegularityOnePhaseFreeBd}.
\end{proof}
In order to give an estimate for the dimension of the singular set, as described in Theorem \ref{thm:C1aRegularity}, in the next lemma we show the proportionality of the blow-up limits at the two-phase points.
\begin{lemma}\label{lemma:TwoPhaseBlowUpProportional}
    Let the functions $u$ and $v$ be solutions of problem \eqref{eqn:MinimumPbExplicit} and $x \in \partial \{ u>0 \} \cap \partial \{ v>0 \}$. Suppose that $u_{\infty}$ and $v_{\infty}$ are blow-up limits for $u$ and $v$ at $x$, respectively. 

    Then, there exists a positive constant $c \in \R$ such that %
    \[
    u_{\infty} = c v_{\infty}  \quad \text{in } B_R, \text{ for any } R >0 .
    \]
\end{lemma}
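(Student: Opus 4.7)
The plan is to prove proportionality by first upgrading $(u_\infty, v_\infty)$ to be $1$-homogeneous via a joint Weiss-type monotonicity formula, and then using spectral arguments on the unit sphere to force the two positivity sets to coincide and to deliver proportionality.

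First I would establish the monotonicity of the joint Weiss energy
\[
W(r, x) := \frac{1}{r^N}\int_{B_r(x)}\big(|\nabla u|^2+|\nabla v|^2\big)\,dy + \Lambda_u\frac{|\{u>0\}\cap B_r(x)|}{r^N} + \Lambda_v\frac{|\{v>0\}\cap B_r(x)|}{r^N} - \frac{1}{r^{N+1}}\int_{\partial B_r(x)}(u^2+v^2)\,d\sigma
\]
in $r>0$ at two-phase points $x\in\partial\{u>0\}\cap\partial\{v>0\}$. The computation is the standard Weiss one, applied simultaneously to both phases, which is permissible because radial dilations preserve the constraint $\{v>0\}\subseteq\{u>0\}$. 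Together with the uniform Lipschitz bound from Corollary \ref{crl:localLipschitzReg}, this yields that any blow-up pair $(u_\infty, v_\infty)$ is $1$-homogeneous from the origin and is a global minimizer of \eqref{eqn:BlowUpCoincidencePb}.

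With $1$-homogeneity, write $u_\infty(y)=|y|\phi_u(y/|y|)$ and $v_\infty(y)=|y|\phi_v(y/|y|)$, and set $\omega_u:=\{\phi_u>0\}\subseteq S^{N-1}$ and $\omega_v:=\{\phi_v>0\}\subseteq\omega_u$. Harmonicity of $u_\infty$ on its positivity set combined with $1$-homogeneity gives that $\phi_u$ solves $-\Delta_{S^{N-1}}\phi_u=(N-1)\phi_u$ in $\omega_u$ with $\phi_u>0$; being positive, $\phi_u$ is a first Dirichlet eigenfunction, hence $\lambda_1(\omega_u)=N-1$, and analogously $\lambda_1(\omega_v)=N-1$. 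The strict monotonicity of $\lambda_1$ under open-set inclusion then forces $\omega_u=\omega_v$: if $\omega_u\setminus\omega_v$ were nonempty, being the intersection of the open set $\omega_u$ with the complement of the open set $\omega_v$ it would contain a nonempty open subset of $S^{N-1}$, hence have positive capacity, contradicting $\lambda_1(\omega_v)=\lambda_1(\omega_u)$.

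On the common domain $\omega:=\omega_u=\omega_v$, both $\phi_u$ and $\phi_v$ are positive first Dirichlet eigenfunctions of $-\Delta_{S^{N-1}}$ with the same eigenvalue $N-1$; by simplicity of the first eigenvalue on each connected component, $\phi_u=c\phi_v$ for some $c>0$. The joint Bernoulli condition $|\nabla u_\infty|^2+|\nabla v_\infty|^2=1$ on the (now entirely two-phase) free boundary $\partial\omega$, combined with $\nabla\phi_u=c\nabla\phi_v$, yields $(c^2+1)|\nabla_{S^{N-1}}\phi_v|^2=1$ on $\partial\omega$ and thereby fixes $c$ across components. Extending by $1$-homogeneity gives $u_\infty=cv_\infty$ in $\R^N$. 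I expect the main obstacle to be the identification $\omega_u=\omega_v$, which is the step that genuinely leverages the ordering $\{v>0\}\subseteq\{u>0\}$ through the strict spectral monotonicity of $\lambda_1$; the remainder reduces to classical Weiss analysis and spectral theory.
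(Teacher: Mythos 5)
Your route differs from the paper's: the paper follows \cite{DePhilippisSpolaorVelichkov2021:TwoPhaseBernoulli} by choosing $c$ with $\int_{S^{N-1}}(u_\theta-cv_\theta)=0$, showing that $u_\theta-cv_\theta$ saturates the Rayleigh quotient for $\lambda_2(S^{N-1})=N-1$ and is therefore a degree-one spherical harmonic, and then using the inner/outer ball condition to force both positivity sets to be the \emph{same half-space}, on which a nonzero linear function cannot vanish. Your plan instead identifies the spherical sections via strict monotonicity of $\lambda_1$ and then invokes simplicity of the ground state. This is an attractive alternative, but it has a genuine gap: every step that does real work silently assumes that $\omega_u$ is connected. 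Strict monotonicity of $\lambda_1$ under removal of a positive-capacity set fails for disconnected sets: if $\omega_u=A\cup B$ with $A,B$ disjoint open sets each carrying a positive eigenfunction with eigenvalue $N-1$ (which is exactly what $\phi_u>0$ on all of $\omega_u$ gives you on \emph{each} component), and $\omega_v=A$, then $\lambda_1(\omega_v)=\lambda_1(\omega_u)=N-1$ with no contradiction, even though $\omega_u\setminus\omega_v$ is a nonempty open set. Your argument only shows that each component of $\omega_v$ fills up the component of $\omega_u$ containing it; it does not exclude components of $\omega_u$ disjoint from $\omega_v$, and on such a component $u_\infty>0=v_\infty$, so the conclusion itself would be false there. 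The same issue resurfaces at the end: simplicity of $\lambda_1$ gives a constant $c_i$ \emph{per component}, and the Bernoulli condition $(c_i^2+1)|\nabla v_\infty|^2=1$ on $\partial\omega$ only determines $|\nabla v_\infty|$ on the boundary of the $i$-th component in terms of $c_i$; it imposes no relation between $c_i$ and $c_j$ for different components, so it does not "fix $c$ across components" as claimed. (A smaller point in the same step: $\omega_u\setminus\omega_v$ is the intersection of an open set with a \emph{closed} set, so it need not contain a nonempty open subset; you would need $\omega_u\setminus\overline{\omega_v}\neq\emptyset$, or a capacity argument.)

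The gap is fixable but requires an additional input that you have not supplied. One repair: if $\omega_u$ were disconnected, each component would have first Dirichlet eigenvalue $N-1$, i.e.\ characteristic constant $1$; the Friedland--Hayman inequality (with its equality case) then forces exactly two components, both half-spheres, so that $u_\infty=a(x\cdot\nu)^++b(x\cdot\nu)^-$ and $|\{u_\infty=0\}|=0$, contradicting the density estimate for the zero phase of minimizers at free boundary points. With connectedness in hand, your spectral argument closes. Alternatively, you can adopt the paper's device, which bypasses connectedness altogether: once $u_\theta-cv_\theta$ is known to be linear, the containment $\{u_\theta>0\}\supseteq\{u_\theta-cv_\theta>0\}$ plus the inner ball property pins $\{u_\infty>0\}$ down to a single half-space in one stroke.
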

\begin{proof}
    Without loss of generality, we give the proof in the case $R=1$.
    
    By Lemmas \ref{lemma:BlowUpProperties}, \ref{lemma:SolutionsNonDegeneracy} and Corollary \ref{crl:localLipschitzReg}, the functions $u_{\infty}$, $v_{\infty} \in L_{\loc}^{\infty}(\R^N) \cap H_{\loc}^1(\R^N)$ are non-negative, non-vanishing (Lipschitz) continuous functions which are harmonic on their positivity sets. Moreover, using the monotonicity of Wiess' boundary adjusted energy (see e.g. \cite{Weiss99:PartialRegularityFreeBd} or \cite[Section 9]{Velichkov:RegularityOnePhaseFreeBd}), one can show that $u_{\infty}$ and $v_{\infty}$ are one-homogeneous functions in $\R^N$.

    Let $u_{\theta} \in H_0^1\left( \Omega_u \cap S^{N-1} \right)$ and $v_{\theta} \in H_0^1\left( \Omega_u \cap S^{N-1} \right)$ denote the traces of $u_{\infty}$ and $v_{\infty}$ on $\partial B_R$. They solve
    \begin{align}\label{eqn:ProportionalBlowUpsEqns}
    \begin{split}
        & \Delta_S u_{\theta} + (d-1) u_{\theta} = 0 \quad \text{in } \Omega_u \cap S^{N-1} , \\
        & \Delta_S v_{\theta} + (d-1) v_{\theta} = 0 \quad \text{in } \Omega_v \cap S^{N-1} .
    \end{split}
    \end{align}
    Now we proceed ad in \cite[Lemma 2.2]{DePhilippisSpolaorVelichkov2021:TwoPhaseBernoulli}. Let $c \ge 0$ be such that
    \[
    \int_{S^{N-1}} u_{\theta} - c v_{\theta} = 0 .
    \]
    Then, using \eqref{eqn:ProportionalBlowUpsEqns} and the fact that $\Omega_v \subseteq \Omega_u$
    \[
    \int_{S^{N-1}} \left\vert \nabla_{S^{N-1}} (u_{\theta} - c v_{\theta})  \right\vert^2 = (d-1) \int_{S^{N-1}} \vert u_{\theta} - c v_{\theta} \vert^2 . 
    \]
    Hence, the function $u_{\theta} - c v_{\theta}$ is an eigenfunction of the Laplace-Beltrami operator associated to the eigenvalue $d-1$, so that it is a linear function.

    This implies that the positivity set of $u_{\infty}$ contains a half-space, hence it has the inner ball condition and (e.g. by \cite[Lemma 11.17]{CaffarelliSalsa:GeomApproachToFreeBoundary}) it has to coincide with such half-space. But then, also the positivity set of $v_{\infty}$ needs to coincide with the same half-space (since it has the outer ball condition, e.g. again by \cite[Lemma 11.17]{CaffarelliSalsa:GeomApproachToFreeBoundary}).

    Hence, the the function $u_{\theta} - c v_{\theta}$ can coincide with a linear function only if it vanishes identically.
\end{proof}

\subsection{Viscosity setting}\label{subsection:ViscositySetting}
Let us recall the definition of sub/super and viscosity solution for problem \eqref{eqn:MainViscousPb}.
\begin{definition}
    Let $\varphi(x)$, $\psi(x) \in C^0_{loc}(B_1)$. We say that:
    \begin{itemize}
        \item[(i)] $\varphi$ touches $\psi$ from below at $x \in B_1$ if $\varphi(x) = \psi(x)$ and $\varphi \le \psi$ in a neighborhood of $x$,

        \item[(ii)] $\varphi$ touches $\psi$ from above at $x \in B_1$ if $\varphi(x) = \psi(x)$ and $\varphi \ge \psi$ in a neighborhood of $x$.
    \end{itemize}
\end{definition}
\begin{definition}\label{def:viscoudSolOneSidedTwoPhasePb}
    Let $u$, $v \in C^0_{loc}\left( B_1 \right)$. 
    \begin{itemize}
        \item[(i)] We say that the couple $(u, v)$ is a viscosity supersolution of problem \eqref{eqn:MainViscousPb} if
        \begin{itemize}
            \item[1.] $\varphi \in C_{loc}^2(B_1)$ with $\Delta \varphi > 0$ in $B_1$ and $\vert \nabla \varphi \vert^2 > \Lambda_u$ on $\partial \{ \varphi>0 \} \cap B_1$ cannot touch $u$ from below in $\{ u>0 \} \cup \left( \partial \{ u>0 \} \setminus \partial \{ v>0 \} \right)$. We call such $\varphi$ a strict comparison subsolution for $u$.

            \item[2.] $\varphi \in C_{loc}^2(B_1)$ with $\Delta \varphi > 0$ in $B_1$ and $\vert \nabla \varphi \vert^2 > \Lambda_v$ on $\partial \{ \varphi>0 \} \cap B_1$ cannot touch $v$ from below in $\{ v>0 \} \cup \left( \partial \{ v>0 \} \setminus \partial \{ u>0 \} \right)$. We call such $\varphi$ a strict comparison subsolution for $v$.

            \item[3.] $\varphi \in C_{loc}^2(B_1)$ with $\Delta \varphi > 0$ in $B_1$ and $\vert \nabla \varphi \vert^2 > 1$ on $\partial \{ \varphi>0 \} \cap B_1$ cannot touch the average $q_{\eta} \coloneqq 
            ( u, v) \cdot \eta$ (for any $\eta \in S^{N-1}$) from below in $\{ q_{\eta}>0 \} \cup \left( \partial \{ v>0 \} \cap \partial \{ u>0 \} \right)$. We call such $\varphi$ a strict comparison subsolution for $q_{\eta}$.
        \end{itemize}

        \item[(ii)] We say that $(u, v)$ is a viscosity subsolution of problem \eqref{eqn:MainViscousPb} if
        \begin{itemize}
            \item[1.] $\varphi \in C_{loc}^2(B_1)$ with $\Delta \varphi < 0$ in $B_1$ and $\vert \nabla \varphi \vert^2 < \Lambda_u$ on $\partial \{ \varphi>0 \} \cap B_1$ cannot touch $u$ from above in $\{ u>0 \} \cup \left( \partial \{ u>0 \} \setminus \partial \{ v>0 \} \right)$. We call such $\varphi$ a strict comparison supersolution for $u$.

            \item[2.] $\varphi \in C_{loc}^2(B_1)$ with $\Delta \varphi < 0$ in $B_1$ and $\vert \nabla \varphi \vert^2 < \Lambda_v$ on $\partial \{ \varphi>0 \} \cap B_1$ cannot touch $v$ from above in $\{ v>0 \} \cup \left( \partial \{ v>0 \} \setminus \partial \{ u>0 \} \right)$. We call such $\varphi$ a strict comparison supersolution for $v$.

            \item[3.] $\varphi \in C_{loc}^2(B_1)$ with $\Delta \varphi < 0$ in $B_1$ and $\vert \nabla \varphi \vert^2 < 1$ on $\partial \{ \varphi>0 \} \cap B_1$ cannot touch the modulus $m \coloneqq \sqrt{ u^2 + v^2 }$ from above in $\{ m>0 \} \cup \left( \partial \{ v>0 \} \cap \partial \{ u>0 \} \right)$. We call such $\varphi$ a strict comparison supersolution for the modulus $m$.
        \end{itemize}

        \item[(iii)] We say that $(u, v)$ is a viscosity solution of problem \eqref{eqn:MainViscousPb} if it is both a subsolution and a supersolution.
    \end{itemize}
\end{definition}
We are ready to state the following
\begin{lemma}\label{lemma:VariationalSolIsViscousSol}
    Let $(u, v)$ be a solution of \eqref{eqn:MinimumPbExplicit}. Then it is also a viscosity solution for problem \eqref{eqn:MainViscousPb} in the sense of Definition \ref{def:viscoudSolOneSidedTwoPhasePb}.
\end{lemma}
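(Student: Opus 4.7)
The plan is to verify each of the six clauses of Definition \ref{def:viscoudSolOneSidedTwoPhasePb} by constructing competitors that contradict the minimality of $(u,v)$. The conditions involving only $u$ (resp.\ only $v$) reduce to the classical one-phase argument, whereas the coupled conditions on the average $q_\eta$ and the modulus $m=\sqrt{u^2+v^2}$ require a joint perturbation of both phases that exploits the balance $\Lambda_u+\Lambda_v=1$.

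For the single-function parts, suppose first that $\varphi$ is a strict comparison subsolution touching $u$ from below at some $x_0\in\{u>0\}\cup(\partial\{u>0\}\setminus\partial\{v>0\})$. If $x_0\in\{u>0\}$, then $u$ is harmonic and $\Delta\varphi>0$ near $x_0$, so the strict maximum principle rules out any touching. If $x_0\in\partial\{u>0\}\setminus\partial\{v>0\}$, then $v\equiv 0$ on a neighborhood $U$ of $x_0$ by continuity, so the inclusion $\{v>0\}\subseteq\{u>0\}$ and the ordering $u\ge v$ are automatically preserved by any upward perturbation of $u$ supported in $U$. One then runs the classical one-phase competitor construction (see for instance \cite[Proposition~7.2]{Velichkov:RegularityOnePhaseFreeBd}): the competitor $\tilde u:=\max(u,\varphi_\varepsilon)$ with $\varphi_\varepsilon$ a small vertical lift of $\varphi$ strictly decreases the $u$-part of $\mathcal F$ while leaving $v$ untouched, contradicting minimality. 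The supersolution clause for $v$ and all the subsolution counterparts are handled symmetrically; for upward perturbations of $v$ near a point of $\{v>0\}\cup(\partial\{v>0\}\setminus\partial\{u>0\})$ one uses that $u>0$ and continuous there, so a small perturbation of $v$ stays below $u$.

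For the coupled conditions at $x_0\in\partial\{u>0\}\cap\partial\{v>0\}$, fix $\eta=(\eta_1,\eta_2)\in S^1$ (the symbol $S^{N-1}$ in the definition refers to the unit sphere in the target $\mathbb{R}^2$) and suppose a strict comparison subsolution $\varphi$ with $|\nabla\varphi|^2>1$ touches $q_\eta=u\eta_1+v\eta_2$ from below at $x_0$. The idea is to use $\varphi_\varepsilon$ to push both positivity sets outward by proportionally small amounts along the same direction, producing a coupled competitor $(\tilde u,\tilde v)$ whose total energy
\[
\int |\nabla \tilde u|^2 + |\nabla \tilde v|^2 + \Lambda_u|\{\tilde u>0\}| + \Lambda_v|\{\tilde v>0\}|
\]
is strictly less than that of $(u,v)$: at leading order, the gradient gain from $|\nabla\varphi|^2>1$ beats the measure cost $\Lambda_u|\Delta\Omega_u|+\Lambda_v|\Delta\Omega_v|\le(\Lambda_u+\Lambda_v)|\Delta\Omega|=|\Delta\Omega|$, where $\Delta\Omega_u$ and $\Delta\Omega_v$ are the symmetric differences of positivity sets produced by the perturbation. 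The dual argument for $m=\sqrt{u^2+v^2}$ uses an inward coupled perturbation obtained by projecting onto the direction $(u,v)/m$ at $x_0$, with the strict inequality $|\nabla\varphi|^2<1$ making the measure gain outweigh the gradient cost.

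The principal obstacle is arranging the coupled competitor so that (i) the admissibility condition $\tilde u\ge\tilde v$ and the inclusion $\{\tilde v>0\}\subseteq\{\tilde u>0\}$ survive for every admissible $\eta$, and (ii) the first-order energy variation is strictly signed in the correct direction. I expect the cleanest route is to pass first to the blow-up $(u_\infty,v_\infty)$, where Lemma \ref{lemma:flatBlowUpsRightGradient}(iii) pins the plane-solution slopes to $\Gamma_u^2+\Gamma_v^2=1$ so the coupled perturbations can be written explicitly as shifts of half-planes, then run the competitor argument on the plane solutions, and finally transfer the contradiction back to $(u,v)$ by a standard rescaling, following the template of the vectorial Bernoulli proofs in \cite{MazzoleniTerraciniVelichkov2020:RegularityVectorialBernoulli, DeSilvaTortone2020:ViscousVectorialBernoulli}.
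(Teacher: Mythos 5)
Your treatment of the coupled clauses ultimately lands on the paper's argument: the proof in the paper is a two-line citation of Lemma \ref{lemma:flatBlowUpsRightGradient} together with \cite[Lemma 11.17]{CaffarelliSalsa:GeomApproachToFreeBoundary}, i.e.\ the touching function supplies an interior (or exterior) tangent ball for $\{u>0\}=\{q_\eta>0\}$, the tangent-ball lemma forces the blow-up of $u$ at the touching point to be a half-plane solution, Lemma \ref{lemma:TwoPhaseBlowUpProportional} makes the blow-up of $v$ a proportional half-plane with the same normal, and Lemma \ref{lemma:flatBlowUpsRightGradient}(iii) pins $\Gamma_u^2+\Gamma_v^2=1$; the slope of the blow-up of $q_\eta$ is then $\eta\cdot(\Gamma_u,\Gamma_v)\le 1$ by Cauchy--Schwarz, and comparing with $|\nabla\varphi(x_0)|>1$ gives the contradiction directly. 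So the extra ``competitor argument on the plane solutions'' you append at the end is unnecessary, and your earlier coupled-perturbation sketch (whose admissibility issues you yourself flag) can be discarded. Your reading of $S^{N-1}$ as the unit sphere in the target $\R^2$ is correct.

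The genuine gap is in the single-phase clauses. You assert that $\tilde u:=\max(u,\varphi_\varepsilon)$ ``strictly decreases the $u$-part of $\mathcal F$,'' but the required inequality
\[
\int_{\{\varphi_\varepsilon>u\}}\bigl(|\nabla\varphi_\varepsilon|^2-|\nabla u|^2\bigr)+\Lambda_u\bigl|\{\varphi_\varepsilon>0\}\setminus\{u>0\}\bigr|<0
\]
is exactly the point at issue, and your sketch never indicates where the hypothesis $|\nabla\varphi|^2>\Lambda_u$ enters; as written, the same reasoning would forbid \emph{any} $C^2$ function with $\Delta\varphi>0$ from touching $u$ from below at a free boundary point, which is false. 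The Dirichlet term does decrease (integrate by parts on $\{\varphi_\varepsilon>u\}$ using that $\Delta u\ge 0$ as a measure), but showing it decreases by more than the measure cost requires quantitative control of $|\nabla u|$ on $\partial\{u>0\}$ --- essentially the free boundary condition you are trying to prove. The paper avoids this entirely by running the same blow-up/tangent-ball/slope-comparison argument for all six clauses: once $u_\infty=\sqrt{\Lambda_u}(x\cdot\nu)^+$, the ordering $\varphi\le u$ with equality at $x_0$ forces $|\nabla\varphi(x_0)|\le\sqrt{\Lambda_u}$, contradicting strict subsolutionhood. You should replace the competitor step by this comparison (or by an actual reference that carries out the energy estimate).
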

\begin{proof}
The thesis follows by standard elliptic regularity theory at points in the interior of the positivity sets, while from Lemma \ref{lemma:flatBlowUpsRightGradient} and \cite[Lemma 11.17]{CaffarelliSalsa:GeomApproachToFreeBoundary} at the free boundary points.    
\end{proof}

\section{Almost-optimal regularity}\label{section:Regularity}
In this section we prove Theorem \ref{thm:C1aRegularity}. We basically adapt the viscous linearization approach of \cite{DeSilva:FreeBdRegularityOnePhase}, but we also need to take care of the contact points $\partial\{ u>0 \} \cap \partial\{ v>0 \}$. To this aim, we combine ideas for the regularity of the one phase (\cite{AltCaffarelli:OnePhaseFreeBd, DeSilva:FreeBdRegularityOnePhase}), vectorial (\cite{MazzoleniTerraciniVelichkov2017:RegularitySpectralFunctionals, MazzoleniTerraciniVelichkov2020:RegularityVectorialBernoulli, DeSilvaTortone2020:ViscousVectorialBernoulli}) and two phase (\cite{AltCaffarelliFriedman1984:TwoPhaseBernoulli, DePhilippisSpolaorVelichkov2021:TwoPhaseBernoulli}) Bernoulli problems. We proceed as follows.

Following \cite{DeSilva:FreeBdRegularityOnePhase}, the $C^{1, \alpha}$ regularity of the free boundary is a direct consequence of an improvement of flatness result (see Section \ref{subsection:ImprovFlatness}, Theorem \ref{thm:ImprovFlat}), which in turn is proved via a partial Harnack approach (see Section \ref{subsection:BoundaryHarnack}).

Briefly, in order to get Harnack's inequality, we use a combination of the competitors from \cite{DeSilva:FreeBdRegularityOnePhase} for the one phase problem with those from \cite{DeSilvaTortone2020:ViscousVectorialBernoulli} for the vectorial problem, and inspired by \cite{DePhilippisSpolaorVelichkov2021:TwoPhaseBernoulli} we discern the branching points from those where the free boundaries are attached/detached, by looking at the value of the gradient of the blow-up limits.

\subsection{Partial Harnack inequality}\label{subsection:BoundaryHarnack}
In this section we prove a partial Harnack inequality for viscosity solutions to problem \eqref{eqn:MainViscousPb}. To this aim, inspired by \cite{DePhilippisSpolaorVelichkov2021:TwoPhaseBernoulli} concerning the two phase Bernoulli problem, we distinguish two regimes. Consequently, we split the partial Harnack inequality into two main lemmas. The branching regime (namely \eqref{eqn:HarnackIneqBranchingFullHp}) is thought for neighborhoods of points near the branching of the free boundaries, while the coincidence regime (namely \eqref{eqn:HarnackIneqCoincidenceHp}) for neighborhoods where the coincidence of the free boundaries occurs.

We begin studying the branching regime. There are two main  steps in this case. The first one is that we need to deal with both one-phase and two-phase points, while the second one is that we need to take into account for the relative distance between the free boundaries. For this reason, we split the proof of the partial Harnack inequality in two lemmas.

More precisely, Lemma \ref{lemma:PartialHarnackIneqBranching} deals with both one-phase and two-phase points, but the relative distance between the free boundaries is assumed to be virtually zero. On the other hand, Lemma \ref{lemma:PartialHarnackIneqBranchingFull} extends Lemma \ref{lemma:PartialHarnackIneqBranching} also to the case of positive distance, and is basically obtained as an interpolation between Lemma \ref{lemma:PartialHarnackIneqBranchingFull} and \cite[Lemma 3.3]{DeSilva:FreeBdRegularityOnePhase}.
\begin{lemma}\label{lemma:PartialHarnackIneqBranching}
Let $(u, v)$ be a viscosity solution of problem \eqref{eqn:MainViscousPb}. There exist constants $\varepsilon_0, \rho_0, c > 0$ (independent of the specific solution and $\varepsilon$) such that, if
\begin{align} \label{eqn:HarnackIneqBranchingHp}
    \begin{split}
        & \sqrt{\Lambda_u} \left( x_N + \sigma - \varepsilon \right)^+ \le u \le \sqrt{\Lambda_u} \left( x_N + \sigma + \varepsilon\right)^+ \\
        & \sqrt{\Lambda_v} \left( x_N + \sigma - \varepsilon\right)^+ \le v \le \sqrt{\Lambda_v} \left( x_N + \sigma + \varepsilon\right)^+
    \end{split}
\end{align}
for some $\varepsilon < \varepsilon_0$ and $\vert \sigma \vert \le 1/10$, at least one of the following holds:
\begin{align}
    u \ge \sqrt{\Lambda_u} \left( x_N + \sigma - c \varepsilon \right)^+ \text{ and } v \ge \sqrt{\Lambda_v} \left( x_N + \sigma - c \varepsilon \right)^+  \quad \text{in } B_{\rho} , \label{eqn:HarnackImproveBelowBranching}\\ 
    u \le \sqrt{\Lambda_u} \left( x_N + \sigma + (1-c) \varepsilon \right)^+ \text{ and } v \le \sqrt{\Lambda_v} \left( x_N + \sigma + (1-c) \varepsilon \right)^+ \quad \text{in } B_{\rho} \label{eqn:HarnackImproveAboveBranching} , \\
    u \ge \sqrt{\Lambda_u} \left( x_N + \sigma - c \varepsilon \right)^+ \text{ and } v \le \sqrt{\Lambda_v} \left( x_N + \sigma + (1-c) \varepsilon \right)^+ \quad \text{in } B_{\rho} \label{eqn:HarnackIncreaseDistanceBranching}.
\end{align}
\end{lemma}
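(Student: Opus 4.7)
The approach follows De Silva's partial Harnack scheme \cite{DeSilva:FreeBdRegularityOnePhase}, adapted to accommodate the coupling between $u$ and $v$ through the inclusion $\{v>0\} \subseteq \{u>0\}$ and the joint gradient condition of Definition \ref{def:viscoudSolOneSidedTwoPhasePb}. Fix the center point $\bar x := (0, \dots, 0, 1/5 - \sigma)$, so that $\bar x_N + \sigma = 1/5$. Thanks to $|\sigma| \le 1/10$ and $\varepsilon \le \varepsilon_0 \ll 1$, the point $\bar x$ lies well inside $\{u>0\} \cap \{v>0\}$, and \eqref{eqn:HarnackIneqBranchingHp} places $u(\bar x)$ (resp.\ $v(\bar x)$) within $\sqrt{\Lambda_u}\varepsilon$ (resp.\ $\sqrt{\Lambda_v}\varepsilon$) of the midpoint $\sqrt{\Lambda_u}/5$ (resp.\ $\sqrt{\Lambda_v}/5$). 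I would then split into four cases according to the signs of $u(\bar x) - \sqrt{\Lambda_u}/5$ and $v(\bar x) - \sqrt{\Lambda_v}/5$.

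\emph{Cases A (both $\ge$) and B (both $<$).} These produce \eqref{eqn:HarnackImproveBelowBranching} and \eqref{eqn:HarnackImproveAboveBranching} respectively via a joint strict comparison sub-/supersolution. In Case A, each of the non-negative harmonic functions $u - \sqrt{\Lambda_u}(x_N + \sigma - \varepsilon)^+$ and $v - \sqrt{\Lambda_v}(x_N + \sigma - \varepsilon)^+$ attains value $\gtrsim \varepsilon$ at $\bar x$, and a boundary Harnack inequality in the flat direction propagates this quantitatively to a subball. One then builds the joint barrier
\[
\bigl(w_u, w_v\bigr) = \Bigl(\sqrt{\Lambda_u}(x_N + \sigma - \varepsilon + c\varepsilon\phi)^+,\; \sqrt{\Lambda_v}(x_N + \sigma - \varepsilon + c\varepsilon\phi)^+\Bigr),
\]
where $\phi$ is a De Silva radial harmonic bump supported in $B_1 \setminus B_{r_0}(\bar x)$, and checks that on the common free boundary $|\nabla w_u|^2 + |\nabla w_v|^2 > 1$ strictly, using the normalization $\Lambda_u + \Lambda_v = 1$. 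This is precisely the strict comparison condition (i.3) on $q_\eta$ in Definition \ref{def:viscoudSolOneSidedTwoPhasePb}, so the pair $(w_u, w_v)$ is an admissible strict subsolution and slides below $(u, v)$ in $B_\rho$. Case B is symmetric, working with the modulus $m = \sqrt{u^2 + v^2}$ and item (ii.3) of Definition \ref{def:viscoudSolOneSidedTwoPhasePb}.

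\emph{Case C ($u \ge$, $v <$).} Push $u$ from below and $v$ from above with \emph{separate} one-phase barriers whose free boundaries are kept strictly apart — the $u$-barrier's boundary strictly below and the $v$-barrier's boundary strictly above the plane $\{x_N + \sigma = 0\}$. In this detached regime only items (i.1) and (ii.2) of Definition \ref{def:viscoudSolOneSidedTwoPhasePb} are needed, and the sliding yields \eqref{eqn:HarnackIncreaseDistanceBranching}. \emph{Case D ($u <$, $v \ge$).} The naive symmetric push (push $u$ from above, push $v$ from below) would force $\partial\{v>0\}$ strictly below $\partial\{u>0\}$, contradicting $\{v>0\} \subseteq \{u>0\}$. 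Instead, exploit the inclusion: push $v$ from below exactly as in Case A (valid since $v(\bar x) \ge$ midpoint), obtaining $\{v>0\} \cap B_\rho \supseteq \{x_N + \sigma > -c\varepsilon\}$ and hence the same containment for $\{u>0\}$. Then, since $u$ is harmonic on this enlarged positivity set with $u = 0$ on its now-lowered boundary, a Hopf/boundary-Harnack comparison — combined with the near-one-phase Bernoulli growth $|\nabla u| \approx \sqrt{\Lambda_u}$ on the contact portion of $\partial\{u>0\}$ (which follows from the flatness of both $u$ and $v$ together with $\Lambda_u + \Lambda_v = 1$) — upgrades the set containment to the pointwise bound $u \ge \sqrt{\Lambda_u}(x_N + \sigma - c\varepsilon)^+$, recovering \eqref{eqn:HarnackImproveBelowBranching}.

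\emph{Main obstacle.} The crucial technical point is the joint barrier of Case A/B: its two components must share a single free boundary and together satisfy the strict sum-of-squares gradient condition of Definition \ref{def:viscoudSolOneSidedTwoPhasePb}(i.3)/(ii.3). The identity $\Lambda_u + \Lambda_v = 1$, together with the flat half-plane profiles $\sqrt{\Lambda_u}(x_N)^+$ and $\sqrt{\Lambda_v}(x_N)^+$ appearing in \eqref{eqn:HarnackIneqBranchingHp}, is exactly what makes such a joint barrier admissible — it is here that the two-phase structure of the problem enters in a non-trivial way, in contrast to the one-phase and vectorial settings. A secondary subtlety is Case D, where the straightforward push is incompatible with the inclusion and one must transfer the push of $v$ to a push of $u$ via set containment plus a harmonic-measure argument.
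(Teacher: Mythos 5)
Your overall scheme -- evaluating $u$ and $v$ at an interior point $\bar x$, splitting into four cases, and using De Silva-type sliding barriers paired with the average $q=\sqrt{\Lambda_u}u+\sqrt{\Lambda_v}v$ (for pushes from below) and the modulus $m=\sqrt{u^2+v^2}$ (for pushes from above) to get past two-phase points -- is the same as the paper's, and your Cases A, B, C correspond to the paper's cases (ii), (i), (iv) respectively and are essentially correct.

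The gap is in Case D ($u(\bar x)$ below its midpoint, $v(\bar x)$ above its midpoint). You claim one can "push $v$ from below exactly as in Case A", but the Case A mechanism does not apply: a strict subsolution barrier for $v$ alone (with $|\nabla\varphi|^2>\Lambda_v$) is only forbidden, by Definition \ref{def:viscoudSolOneSidedTwoPhasePb}(i.2), from touching $v$ on $\{v>0\}\cup(\partial\{v>0\}\setminus\partial\{u>0\})$; it may perfectly well touch $v$ from below at a two-phase point, where $|\nabla v|$ can exceed $\sqrt{\Lambda_v}$. To exclude two-phase touching one must slide the companion competitor for $q$ simultaneously (sharing the same free boundary) and invoke (i.3), and that requires room at $\bar x$ for $q$ as well, i.e.\ $q(\bar x)\ge p(\bar x)+\varepsilon$. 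In Case D this need not hold, precisely because $u(\bar x)$ is small. The paper therefore subdivides this case according to the value of $q(\bar x)$: if $q(\bar x)$ is small it pushes from above via the modulus $m$ and concludes \eqref{eqn:HarnackImproveAboveBranching} (not \eqref{eqn:HarnackImproveBelowBranching} as you assert); only if $q(\bar x)$ is large does it push $v$ and $q$ from below. Your second step in Case D (transferring the improved lower bound on $\partial\{u>0\}$ to a pointwise lower bound on $u$) is the right idea and corresponds to the paper's construction of a harmonic comparison function on a deformed half-space with the quantitative estimate \eqref{eqn:HarnackCase3UnifBound}, proved by compactness; a bare appeal to "Hopf/boundary-Harnack plus $|\nabla u|\approx\sqrt{\Lambda_u}$" would need to be made quantitative and uniform in $\varepsilon$ in exactly this way (and the resulting constant is a smaller $c\sigma\varepsilon$, which is still admissible). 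So the case analysis must be repaired to allow either outcome in Case D, conditioned on $q(\bar x)$.
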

\begin{proof}
The idea is to consider the competitors for both the one phase \cite{DeSilva:FreeBdRegularityOnePhase} and the vectorial Bernoulli problem \cite{DeSilvaTortone2020:ViscousVectorialBernoulli}, and move them together, to take into account contemporarily the disjoint free boundaries and their intersections.

To simplify the notations, we give the proof only in the case $\sigma = 0$. The case $\sigma \neq 0$ can be dealt with in the same way.

Following \cite{DeSilvaTortone2020:ViscousVectorialBernoulli}, let us introduce the competitors
\begin{equation}\label{eqn:vectorialBernoulliCompetitors}
m \coloneqq \sqrt{u^2 + v^2} \quad \text{and} \quad q \coloneqq \sqrt{\Lambda_u} u + \sqrt{\Lambda_v} v.
\end{equation}
Notice that
\begin{align}\label{eqn:VectorialCompetitorModulusProperties}
\begin{split}
     \{ m>0 \} = \{ u>0 \}, \qquad \Delta m \ge 0 \text{ on } \{ m>0 \}, \\
     \vert \nabla m \vert^2 = \Lambda_u + \Lambda_v = 1 \text{ on } \partial\{ u> 0 \} \cap \partial\{ v> 0 \},
\end{split}
\end{align}
\begin{align}\label{eqn:VectorialCompetitorAverageProperties}
\begin{split}
     \{ q>0 \} = \{ u>0 \}, \qquad \Delta q = 0 \text{ on } \{ q>0 \}, \\
     \vert \nabla q \vert^2 \le \vert \nabla m \vert^2 = 1 \text{ on } \partial\{ u>0 \} \cap \partial\{ v>0 \}.
\end{split}
\end{align}
In particular, no viscous strict comparison supersolution for problem \eqref{eqn:MainViscousPb} can touch $m$ from above on its positivity set or on $\partial\{ u> 0 \} \cap \partial\{ v> 0 \}$. Similarly, no strict comparison subsolution for \eqref{eqn:MainViscousPb} can touch $q$ from below on its positivity set or on $\partial\{ u> 0 \} \cap \partial\{ v> 0 \}$.

For later use, let us also introduce the point $\overline{x} \coloneqq 1/5 e_N$, the set $A \coloneqq B_{3/4}\left( \overline{x} \right) \setminus B_{1/20}\left( \overline{x} \right)$, the functions 
\begin{equation}\label{eqn:partialHarnackCompetitors}
p_u(x) = \sqrt{\Lambda_u} \left( x_N - \varepsilon \right), \quad p_u(x) = \sqrt{\Lambda_u} \left( x_N - \varepsilon \right) , \quad p(x) = x_N - \varepsilon
\end{equation}
and
\begin{equation}\label{eqn:partialHarnackFunctionW}
w(x) \coloneqq
\begin{cases}
    1 & \text{on } \overline{B_{1/20}\left( \overline{x} \right)} , \\
    C \left( \left( x - \overline{x} \right)^{-\gamma} - (3/4)^{-\gamma} \right) & \text{on } A , \\
    0 & \text{on } B_1 \setminus B_{3/4}\left( \overline{x} \right) ,
\end{cases}
\end{equation}
where $C, \gamma > 0$ are chosen so that $w$ is continuous on $\partial B_{1/20}\left( \overline{x} \right)$ and $\Delta w >  0$ on $A$.

Now we distinguish several cases, each of which gives (at least) one among \eqref{eqn:HarnackImproveBelowBranching}, \eqref{eqn:HarnackImproveAboveBranching} and \eqref{eqn:HarnackIncreaseDistanceBranching}.
\begin{itemize}
    \item[(i)] Suppose that
    \begin{equation}\label{eqn:partialHarnackDetachCase1Hp}
    u\left( \overline{x} \right) \le p_u\left( \overline{x} \right) + \sqrt{\Lambda_u} \varepsilon \quad \text{and} \quad v\left( \overline{x} \right) \le p_v\left( \overline{x} \right) + \sqrt{\Lambda_v} \varepsilon .
    \end{equation}
    Notice that in such case we also have
    \[
    q\left( \overline{x} \right) \le p\left( \overline{x} \right) + \varepsilon .
    \]
    Hence, by Harnack's inequality, in $B_{1/20}\left( \overline{x} \right)$
    \begin{equation}\label{eqn:HarnackCase1BoundB1/20}
    p + 2 \varepsilon - q \ge c_0 \varepsilon \quad \text{and} \quad p_u + 2 \sqrt{\Lambda_u} \varepsilon - u \ge c_0 \sqrt{\Lambda_u} \varepsilon
    \end{equation}
    for some $c_0 > 0$ sufficiently small. For $t \in [0, 1]$ let us introduce the competitors
    \begin{align*}
        u_t \coloneqq p_u + & 2 \sqrt{\Lambda_u} \varepsilon + c_0 \sqrt{\Lambda_u} \varepsilon (1 - w) - t c_0 \sqrt{\Lambda_u} \varepsilon , \\
        & m_t \coloneqq p + 2 \varepsilon + c_0 \varepsilon (1 - w) - tc_0 \varepsilon .
    \end{align*}
    The key point that makes the argument work is that the competitors $u_t$ and $m_t$ have the same free boundary in $B_1$, since they are proportional (despite that, we keep different notations for the sake of clarity).
    
    Our aim is to show that
    \begin{equation}\label{eqn:HarnackCase1ToShow}
        u_t^+ \ge  u \quad \text{and} \quad m_t^+ \ge m \quad \text{in } B_{1/2}, \quad \text{for all } t \in [0, 1].
    \end{equation}
    Indeed, if \eqref{eqn:HarnackCase1ToShow} holds, then for $t = 1$
    \begin{equation}\label{eqn:HarnackCase1HalfThesis}
    u \le \left( p_u + 2 \sqrt{\Lambda_u} \varepsilon - c_0 \sqrt{\Lambda_u} w \varepsilon \right)^+ \le \sqrt{\Lambda_u} \left( x_N  + (1 - c) \varepsilon \right)^+ ,
    \end{equation}
    which is the first part of \eqref{eqn:HarnackImproveAboveBranching}. To obtain the other half of the statement, just consider the competitor $v_t$ for $v$ defined by
    \[
    v_t \coloneqq p_v + 2 \sqrt{\Lambda_v} \varepsilon + c_0 \sqrt{\Lambda_v} \varepsilon (1 - w) - t \sqrt{\Lambda_v} c_0 \varepsilon
    \]
    and notice that, thanks to \eqref{eqn:HarnackCase1HalfThesis} it holds $v_t^+ \ge v$ for all $t \in [0, 1]$. Indeed, $v_t$ and $u_t$ are proportional, so that $v_t^+$ cannot touch the free boundary of $v$ unless $t = 1$. Hence we also have
    \[
    v \le \left( p_v + 2 \sqrt{\Lambda_v} \varepsilon - c_0 \sqrt{\Lambda_v} w \varepsilon \right)^+ \le \sqrt{\Lambda_v} \left( x_N  + (1 - c) \varepsilon \right)^+ ,
    \]
    which would complete \eqref{eqn:HarnackImproveAboveBranching}. Thus, we are left to show \eqref{eqn:HarnackCase1ToShow}.

    This is quite standard. For $t = 0$ the assertion comes from the hypotheses \eqref{eqn:HarnackIneqBranchingHp}. By contradiction, let $\overline{t} > 0$ be the smallest $t \in [0, 1)$ where the touching happens for $u_t$. By construction, $u_t > u$ on $\partial B_{3/4} \left( \overline{x} \right)$ and $B_{1/20}\left( \overline{x} \right)$ for all $t \in [0, 1]$, by \eqref{eqn:HarnackCase1BoundB1/20}. Moreover, since this competitor is (by construction) a strict comparison viscous supersolution for problem \eqref{eqn:MainViscousPb}, it can only touch $u$ at $x \in \partial\{ v>0 \} \cap \partial\{ u>0 \}$. However, this would imply that $m_t$ touches $m$ from above at the same point $x$, which, as already recalled, is not possible. Hence \eqref{eqn:HarnackCase1ToShow} is proved.
    
    \item[(ii)] Now suppose
    \begin{equation}\label{eqn:partialHarnackDetachCase2Hp}
    u\left( \overline{x} \right) \ge p_u\left( \overline{x} \right) + \varepsilon/2 \quad \text{and} \quad v\left( \overline{x} \right) \ge p_v\left( \overline{x} \right) + \varepsilon/2 .
    \end{equation}
    This case is similar to the previous one. The only different thing is that one first needs to consider the competitors
    \begin{align*}
        v_t & \coloneqq p_v - c_0 \sqrt{\Lambda_v} \varepsilon (1 - w) + t \sqrt{\Lambda_v} c_0 \varepsilon, \\
        q_t & \coloneqq p - c_0 \varepsilon (1 - w) + t c_0 \varepsilon ,
    \end{align*}
    for $v$ and $q$ respectively, and then the competitor
    \[
    u_t \coloneqq p_u - c_0 \sqrt{\Lambda_u} \varepsilon (1 - w) + t \sqrt{\Lambda_u} c_0 \varepsilon,
    \]
    for $u$. In such case \eqref{eqn:HarnackImproveBelowBranching} is obtained. We omit the details.

    \item[(iii)] In the case
    \begin{equation}\label{eqn:partialHarnackDetachCase3Hp}
    u\left( \overline{x} \right) \le p_u\left( \overline{x} \right) + \varepsilon/2 \quad \text{and} \quad v\left( \overline{x} \right) \ge p_v\left( \overline{x} \right) + \varepsilon/2 ,
    \end{equation}
    there is no a priori direction to gain space, and it depends on the value of $q\left( \overline{x} \right)$. In particular, if
    \[
    q\left( \overline{x} \right) \le p\left( \overline{x} \right) + \varepsilon
    \]
    we proceed similarly as in case (i) to obtain \eqref{eqn:HarnackImproveAboveBranching}, while if
    \[
    q\left( \overline{x} \right) \ge p\left( \overline{x} \right) + \varepsilon
    \]
    we proceed similarly as in case (ii) to deduce \eqref{eqn:HarnackImproveBelowBranching}. We detail only the last case since the first one can be dealt with similarly.

    Suppose $q\left( \overline{x} \right) \ge p\left( \overline{x} \right) + \varepsilon$. Exactly as in case (ii) we can define the competitors
        \begin{align*}
        v_t & \coloneqq p_v - c_0 \sqrt{\Lambda_v} \varepsilon (1 - w) + t \sqrt{\Lambda_v} c_0 \varepsilon, \\
        q_t & \coloneqq p - c_0 \varepsilon (1 - w) + t c_0 \varepsilon ,
    \end{align*}
    and prove that
    \[
     v \ge \sqrt{\Lambda_v} \left( x_N - c \varepsilon \right)^+ \quad \text{and} \quad q \ge \left( x_N - c \varepsilon \right)^+ \quad \text{in } B_{1/2}.
    \]
    Such estimate allows to improve the bound from below on the position of the free boundary not only of $v$, but also of $u$. Now we use such information to prove that
    \begin{equation}\label{eqn:HarnackCase3COmpleteBdBelow}
    u \ge \sqrt{\Lambda_u} \left( x_N - c\sigma\varepsilon \right)^+ \quad \text{in } B_{1/2}.
    \end{equation}
    for some $\sigma>0$ sufficiently small, but independent of $u, v$ and $\varepsilon$. This is actually the main difference with respect to case (ii), since we no longer have the bound $u\left( \overline{x} \right) \ge p_u\left( \overline{x} \right) + \varepsilon/2$. Nontheless, we can proceed as follows.

    Consider a function $\phi : \R^{N-1} \to \R$ with the following properties:
    \begin{align*}
    1 \ge \phi \ge 0 & \text{ on } \R^{N-1}, \qquad
    \phi \in C^{\infty}\left( \R^{N-1} \right), \qquad \phi > 0 \text{ on } B_{3/8} \subset \R^{N-1} , \\
    & \phi = 0 \text{ on } \R^{N-1} \setminus B_{3/8}, \qquad \phi = 1 \text{ on } B_{1/4} \subset \R^{N-1},
    \end{align*}
    and define the diffeomorphism $\Psi: \R^N \to \R^N$ as
    \[
    \Psi(x_1, ..., x_N) = (x_1, ..., x_N - c \varepsilon \phi(x_1, ..., x_{N-1})).
    \]
    Let $T_{\varepsilon} \coloneqq \{ (x_N - \varepsilon)^+ > 0 \} \subset \R^{N}$ and consider the set $S \coloneqq \Psi(T_{\varepsilon})$. Introduce the harmonic function $g$ solving
    \[
    \begin{cases}
        \Delta g = 0 & \text{on } S, \\
        g = p_u = \sqrt{\Lambda_n}(x_N - \varepsilon)^+ & \textbf{on } \partial S \cap \partial T_{\varepsilon}.
    \end{cases}
    \]
    It is possible to prove, for instance by contradiction, that
    \begin{equation}\label{eqn:HarnackCase3UnifBound}
    \text{for any } K \Subset T_{\varepsilon} \text{ there exists a constant } \eta_K > 0 \text{ such that } g - p_u \ge \eta_K \varepsilon \text{ on } K.
    \end{equation}
    We remark that $\eta_K$ is independent of $u, v$ and $\varepsilon$. Suppose for a moment that \eqref{eqn:HarnackCase3UnifBound} holds. Then,
    \[
    u\left( \overline{x} \right) \ge p_u\left( \overline{x} \right) + \eta_K c_0 \varepsilon
    \]
    for some fixed $K$ so that, defining the competitor
    \[
    u_t \coloneqq p_u - \eta_k c_0 \sqrt{\Lambda_u} \varepsilon (1 - w) + t \eta_k c_0 \sqrt{\Lambda_u} \varepsilon
    \]
    for $u$, by contradiction (similarly as in case (ii)) one can prove that $u \ge u_t$ for all $t \in [0, 1]$ in $B_{1/2}$, which ultimately leads to \eqref{eqn:HarnackCase3COmpleteBdBelow}.
    
    We are only left to prove \eqref{eqn:HarnackCase3UnifBound}. Suppose by contradiction that the statement is false. Then, by standard elliptic regularity one can prove that
    \[
    \left \| \frac{g - \sqrt{\Lambda_n}(x_N - \varepsilon)}{\varepsilon} \right \|_{C^{0, \alpha}\left( \overline{T_{\varepsilon}} \right)} \le C
    \]
    for some $0 < \alpha < 1$ and $C>0$ uniformly in $\varepsilon$. Thus, up to a subsequence, there exists a limit function $z \in C^{0, \alpha}\left( \overline{T_{\varepsilon}} \right)$ which is harmonic in $T_0 = \{ x_N > 0 \} \cap B_1$. Moreover, by the contradiction assumption and Harnack's inequality, $z = 0$ on $T_0$. On the other hand, by the strong convergence and the shape of $S = \Psi(T_{\varepsilon})$ it holds $z > 0$ on $\{ x_N = 0 \} \cap \partial T_0$ (by the choice of the boundary data for g). Thus we have reached a contradiction and \eqref{eqn:HarnackCase3UnifBound} is proved.
    
    \item[(iv)] The last case is
    \begin{equation}\label{eqn:partialHarnackDetachCase4Hp}
    u\left( \overline{x} \right) \ge p_u\left( \overline{x} \right) + \varepsilon/2 \quad \text{and} \quad v\left( \overline{x} \right) \le p_v\left( \overline{x} \right) + \varepsilon/2 .
    \end{equation}
    This is actually the simplest one and can be carried out exactly as in \cite{DeSilva:FreeBdRegularityOnePhase} for the one phase Bernoulli problem. Indeed, considering the competitors
    \begin{align*}
        u_t & \coloneqq p_u - c_0 \sqrt{\Lambda_u} \varepsilon (1 - w) + t \sqrt{\Lambda_u} c_0 \varepsilon, \\
        v_t & \coloneqq p_v + 2 \sqrt{\Lambda_v} \varepsilon + c_0 \sqrt{\Lambda_v} \varepsilon (1 - w) - t \sqrt{\Lambda_v} c_0 \varepsilon ,
    \end{align*}
    For $u$ and $v$ respectively, now it is not a problem if they touch points in $\partial\{ v>0 \} \cap \partial\{ u>0 \}$ (actually this time it strengthens the contradiction). In this case we get \eqref{eqn:HarnackIncreaseDistanceBranching}.
    
\end{itemize}
\end{proof}
\begin{lemma}\label{lemma:PartialHarnackIneqBranchingFull}
Let $(u, v)$ be a viscosity solution of problem \eqref{eqn:MainViscousPb}. There exist constants $\varepsilon_0, \rho_0, c > 0$ (independent of the specific solution and $\varepsilon$) such that, if
\begin{align} \label{eqn:HarnackIneqBranchingFullHp}
    \begin{split}
        & \sqrt{\Lambda_u} \left( x_N + \sigma_u \right)^+ \le u \le \sqrt{\Lambda_u} \left( x_N + \sigma_u + \varepsilon\right)^+ , \\
        & \sqrt{\Lambda_v} \left( x_N + \sigma_v \right)^+ \le v \le \sqrt{\Lambda_v} \left( x_N + \sigma_v + \varepsilon\right)^+ , \\
        & \vert \sigma_u \vert, \vert \sigma_v \vert \le 1/10 \quad \text{and} \quad \sigma_u \ge \sigma_v
    \end{split}
\end{align}
for some $\varepsilon < \varepsilon_0$, then for some $\sigma'_u, \sigma'_v \in \R$ we have:
\begin{align}
    & \sigma'_u \ge \sigma'_v , \label{eqn:HarnackBranchingFullSigmaPrimeOrdered} \\
    \sqrt{\Lambda_u} \left( x_N + \sigma_u \right)^+ & \le u \le \sqrt{\Lambda_u} \left( x_N + \sigma_u + (1-c) \varepsilon\right)^+ , \label{eqn:HarnackImproveUBranchingFull} \\ 
    \sqrt{\Lambda_v} \left( x_N + \sigma_v \right)^+ & \le v \le \sqrt{\Lambda_v} \left( x_N + \sigma_v + (1-c) \varepsilon\right)^+ . \label{eqn:HarnackImproveVBranchingFull}
\end{align}
\end{lemma}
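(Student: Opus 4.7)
The plan is to deduce Lemma \ref{lemma:PartialHarnackIneqBranchingFull} by interpolating between Lemma \ref{lemma:PartialHarnackIneqBranching} and De Silva's one-phase partial Harnack \cite[Lemma 3.3]{DeSilva:FreeBdRegularityOnePhase}, according to the size of the gap $\sigma_u - \sigma_v$ relative to $\varepsilon$. I fix a universal threshold $\kappa > 0$, to be tuned at the end, and split the hypothesis \eqref{eqn:HarnackIneqBranchingFullHp} into a \emph{branching regime} $\sigma_u - \sigma_v \le \kappa \varepsilon$ and a \emph{one-phase regime} $\sigma_u - \sigma_v > \kappa \varepsilon$.

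In the branching regime, I reduce to the common-center framework of Lemma \ref{lemma:PartialHarnackIneqBranching} by setting $\sigma := \tfrac{1}{2}(\sigma_u + \sigma_v + \varepsilon)$ and $\tilde\varepsilon := \tfrac{1}{2}(\sigma_u - \sigma_v + \varepsilon)$. One checks $\sigma - \tilde\varepsilon = \sigma_v$ and $\sigma + \tilde\varepsilon = \sigma_u + \varepsilon$, so both flatness windows from \eqref{eqn:HarnackIneqBranchingFullHp} are contained in $[\sigma - \tilde\varepsilon, \sigma + \tilde\varepsilon]$ (using $\sigma_u \ge \sigma_v$), and $\tilde\varepsilon \le \tfrac{\kappa+1}{2}\varepsilon < \varepsilon_0$ for $\varepsilon$ small enough. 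Applying Lemma \ref{lemma:PartialHarnackIneqBranching} with this $\sigma, \tilde\varepsilon$ produces the trichotomy \eqref{eqn:HarnackImproveBelowBranching}--\eqref{eqn:HarnackIncreaseDistanceBranching} in $B_{\rho_0}$. In each outcome I extract new centers $\sigma'_u, \sigma'_v$ by taking the maximum (respectively minimum) of the original threshold in \eqref{eqn:HarnackIneqBranchingFullHp} and the newly improved threshold coming from Lemma \ref{lemma:PartialHarnackIneqBranching}: in cases (a) and (b) both $u$ and $v$ receive symmetric updates, while in case (c) the shifts are asymmetric but intrinsically consistent with $\sigma_u \ge \sigma_v$ since $u$'s lower bound rises and $v$'s upper bound drops. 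A direct computation, using $\tilde\varepsilon \ge \varepsilon/2$ together with $\sigma_u - \sigma_v \le \kappa \varepsilon$, then gives slack at most $(1-c')\varepsilon$ for some universal $c' > 0$, provided $\kappa$ is chosen small enough depending on the constant $c$ of Lemma \ref{lemma:PartialHarnackIneqBranching}, and the ordering $\sigma'_u \ge \sigma'_v$ is preserved in all three cases.

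In the one-phase regime, if $\kappa > 1$ then the strips containing $\partial\{u>0\} \subset \{-\sigma_u - \varepsilon \le x_N \le -\sigma_u\}$ and $\partial\{v>0\} \subset \{-\sigma_v - \varepsilon \le x_N \le -\sigma_v\}$ are disjoint, so $\partial\{u>0\} \cap \partial\{v>0\} \cap B_1 = \emptyset$. By Definition \ref{def:viscoudSolOneSidedTwoPhasePb} together with Lemma \ref{lemma:VariationalSolIsViscousSol}, the two-phase contact conditions are vacuous and $u$ (resp.\ $v$) is a viscosity solution of the classical one-phase Bernoulli problem with parameter $\Lambda_u$ (resp.\ $\Lambda_v$) in $B_1$. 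Applying \cite[Lemma 3.3]{DeSilva:FreeBdRegularityOnePhase} separately to $u$ and to $v$ produces new centers $\sigma'_u, \sigma'_v$, each with slack reduced to $(1-c)\varepsilon$. Since each such step shifts its center by at most $c\varepsilon$ while the starting gap is $\sigma_u - \sigma_v > \kappa \varepsilon$, the ordering $\sigma'_u - \sigma'_v \ge (\kappa - 2c)\varepsilon > 0$ is preserved as long as $\kappa > 2c$.

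The hard part will be the joint choice of $\kappa$: it must be small enough that in the branching regime the coupled trichotomy of Lemma \ref{lemma:PartialHarnackIneqBranching}, used with a single common center $\sigma$, yields simultaneous flatness improvement for \emph{both} $u$ and $v$, and at the same time large enough (in particular $\kappa > 1$ and $\kappa > 2c$) that in the one-phase regime the two free boundaries are genuinely separated and the ordering is robust under two independent one-phase Harnack steps. The detailed case-by-case tracking of the three trichotomy outcomes through the explicit translations between $(\sigma, \tilde\varepsilon)$ and $(\sigma_u, \sigma_v, \varepsilon)$, together with the corresponding selection of the universal constants $\varepsilon_0, \rho_0, c$ in terms of those of Lemma \ref{lemma:PartialHarnackIneqBranching} and \cite[Lemma 3.3]{DeSilva:FreeBdRegularityOnePhase}, is where the bulk of the bookkeeping lives.
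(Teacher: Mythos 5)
Your two-regime decomposition is the right general shape (it is also how the paper proceeds), but the way you split the cases leaves an uncovered intermediate range, and the constants cannot be reconciled. In your one-phase regime you need $\sigma_u-\sigma_v>\kappa\varepsilon$ with $\kappa>1$ so that the strips $\{-\sigma_u-\varepsilon\le x_N\le-\sigma_u\}$ and $\{-\sigma_v-\varepsilon\le x_N\le-\sigma_v\}$ are disjoint and the two-phase boundary conditions are vacuous; this is genuinely needed, because for $\sigma_u-\sigma_v\le\varepsilon$ the strips overlap, two-phase points may exist, and $u$, $v$ are then \emph{not} separately viscosity solutions of the one-phase problem (the barriers in De Silva's argument could touch the free boundary at a two-phase point, where only the condition $|\nabla u|^2+|\nabla v|^2=1$ is available). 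On the other hand, your branching regime cannot be pushed up to $\kappa=1$: writing $d=\sigma_u-\sigma_v$ and $\tilde\varepsilon=\tfrac12(\varepsilon+d)$, the detachment outcome \eqref{eqn:HarnackIncreaseDistanceBranching} of Lemma \ref{lemma:PartialHarnackIneqBranching} improves only $u$'s lower bound and $v$'s upper bound; translating back, the new window for $u$ has width at least $\tfrac12(1+c)(\varepsilon+d)$, which is $\ge\varepsilon$ once $d\ge\frac{1-c}{1+c}\varepsilon$, so the improvement degenerates before $d$ reaches $\varepsilon$. Hence there is no admissible $\kappa$: the range (roughly) $\frac{1-c}{1+c}\varepsilon\lesssim\sigma_u-\sigma_v\le\varepsilon$ is covered by neither argument. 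This is a genuine gap, not mere bookkeeping.

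The paper closes exactly this gap differently. Its second regime is $|\sigma_u-\sigma_v|\ge\frac{c}{2}\varepsilon$, where the strips may still overlap and two-phase points may exist; the point is that the two-phase set is confined to $\{-\sigma_v-\varepsilon\le x_N\le-\sigma_u\}$, while the one-phase barriers for $u$ and $v$ are built (by choosing the perturbation size $\delta$ small) so that their free boundaries stay within $\frac{c}{2}\varepsilon$-neighborhoods of the \emph{outer} hyperplanes $\{x_N=-\sigma_u-\varepsilon\}$ and $\{x_N=-\sigma_v\}$ respectively. The separation $\sigma_u-\sigma_v\ge\frac{c}{2}\varepsilon$ then guarantees these neighborhoods never meet the two-phase set, so De Silva's sliding argument goes through even though the free boundaries are not disjoint. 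If you replace your disjointness requirement by this confinement-of-the-barriers argument, your threshold can be taken at $\kappa=\frac{c}{2}$ and the two regimes match up.
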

\begin{proof}
The idea of the proof is to consider two cases: if $\left\vert \sigma_u - \sigma_v \right\vert$ is small enough, then the two free boundaries are expected to be moved together, basically thanks to Lemma \ref{lemma:PartialHarnackIneqBranching}. If, on the other hand, $\left\vert \sigma_u - \sigma_v \right\vert$ is large, then the two free boundary can be moved separately, basically treating them as one-phase free boundaries adapting the arguments in \cite{DeSilva:FreeBdRegularityOnePhase}.

To begin with, we prove the validity of \eqref{eqn:HarnackImproveUBranchingFull} and \eqref{eqn:HarnackImproveVBranchingFull} for some $\sigma'_u, \sigma'_v \in \R$, and after this we will prove that one can actually choose $\sigma'_u$ and $\sigma'_v$ so that \eqref{eqn:HarnackBranchingFullSigmaPrimeOrdered} holds.

In order to prove \eqref{eqn:HarnackImproveUBranchingFull} and \eqref{eqn:HarnackImproveVBranchingFull} we distinguish two cases.

\begin{itemize}
    \item[(i)] Let $c>0$ be the constant given in Lemma \ref{lemma:PartialHarnackIneqBranching}, and suppose that
    \begin{equation}\label{eqn:HarnackIneqBranchingFullCase1Hp}
    \left\vert \sigma_u - \sigma_v \right\vert \le \frac{c}{2} \varepsilon .
    \end{equation}
    Under \eqref{eqn:HarnackIneqBranchingFullCase1Hp} and \eqref{eqn:HarnackIneqBranchingFullHp} we have that
   \begin{align*}
    & \sqrt{\Lambda_u} \left( x_N + \sigma_v \right)^+ \le u \le \sqrt{\Lambda_u} \left( x_N + \sigma_v + (1+c/2) \varepsilon\right)^+ , \\ 
    & \sqrt{\Lambda_v} \left( x_N + \sigma_v \right)^+ \le v \le \sqrt{\Lambda_v} \left( x_N + \sigma_v + (1+c/2) \varepsilon\right)^+ ,
   \end{align*}
   A direct application of Lemma \ref{lemma:PartialHarnackIneqBranching} gives
   \begin{align*}
    & \sqrt{\Lambda_u} \left( x_N + \sigma_v \right)^+ \le u \le \sqrt{\Lambda_u} \left( x_N + \sigma_v + (1-c) (1+c/2) \varepsilon\right)^+ , \\ 
    & \sqrt{\Lambda_v} \left( x_N + \sigma_v \right)^+ \le v \le \sqrt{\Lambda_v} \left( x_N + \sigma_v + (1-c)(1+c/2) \varepsilon\right)^+ ,
   \end{align*}
   so that \eqref{eqn:HarnackImproveUBranchingFull} and \eqref{eqn:HarnackImproveVBranchingFull} hold with $\sigma'_u = \sigma'_v = \sigma_v$ and constant $c/4$.

   \item[(ii)] Suppose that
    \begin{equation}\label{eqn:HarnackIneqBranchingFullCase2Hp}
    \left\vert \sigma_u - \sigma_v \right\vert \ge \frac{c}{2} \varepsilon ,
    \end{equation}
    where $c>0$ denotes again the constant given in Lemma \ref{lemma:PartialHarnackIneqBranching}.
    
    Let us introduce the point $\overline{x} \coloneqq 1/5 e_N$ and consider the functions
    \[
      p_u(x) \coloneqq \sqrt{\Lambda_u} \left( x_N + \sigma_u \right) \quad \text{and} \quad p_v (x) \coloneqq \sqrt{\Lambda_v} \left( x_N + \sigma_v \right).
   \]
    We need to distinguish four cases.
    \begin{align}
        & u\left( \overline{x} \right) \le p_u\left( \overline{x} \right) + \varepsilon/2 \quad \text{and} \quad v\left( \overline{x} \right) \le p_v\left( \overline{x} \right) + \varepsilon/2 ,
        \label{eqn:HarnackIneqBranchingFullCase2Case1Hp} \\
        & u\left( \overline{x} \right) \ge p_u\left( \overline{x} \right) + \varepsilon/2 \quad \text{and} \quad v\left( \overline{x} \right) \ge p_v\left( \overline{x} \right) + \varepsilon/2 ,
        \label{eqn:HarnackIneqBranchingFullCase2Case2Hp} \\
        & u\left( \overline{x} \right) \le p_u\left( \overline{x} \right) + \varepsilon/2 \quad \text{and} \quad v\left( \overline{x} \right) \ge p_v\left( \overline{x} \right) + \varepsilon/2 ,
        \label{eqn:HarnackIneqBranchingFullCase2Case3Hp} \\
        & u\left( \overline{x} \right) \ge p_u\left( \overline{x} \right) + \varepsilon/2 \quad \text{and} \quad v\left( \overline{x} \right) \le p_v\left( \overline{x} \right) + \varepsilon/2 .
        \label{eqn:HarnackIneqBranchingFullCase2Case4Hp}
    \end{align}
    Case \eqref{eqn:HarnackIneqBranchingFullCase2Case4Hp} can be dealt with exactly as point (iv) of Lemma \ref{lemma:PartialHarnackIneqBranching}, hence its proof will be omitted. Moreover, cases \eqref{eqn:HarnackIneqBranchingFullCase2Case1Hp}, \eqref{eqn:HarnackIneqBranchingFullCase2Case2Hp} and \eqref{eqn:HarnackIneqBranchingFullCase2Case3Hp} can be dealt with with the same strategy, which is an adaptation of \cite[Lemma 3.3]{DeSilva:FreeBdRegularityOnePhase}. For these reasons, we only sketch the proof of \eqref{eqn:HarnackImproveUBranchingFull} and \eqref{eqn:HarnackImproveVBranchingFull} for case \eqref{eqn:HarnackIneqBranchingFullCase2Case1Hp}, highlighting the main points which allow to run the argument in \cite[Lemma 3.3]{DeSilva:FreeBdRegularityOnePhase}.

    In order to deal with case \eqref{eqn:HarnackIneqBranchingFullCase2Case1Hp} let the function $w$ as in \eqref{eqn:partialHarnackFunctionW} and introduce the competitors
    \begin{align*}
        & u_t \coloneqq p_u + \sqrt{\Lambda_u} \varepsilon + \delta \varepsilon (1-w) - t \delta \varepsilon , \\
        & v_t \coloneqq p_v  - \delta \varepsilon (1-w) + t \delta \varepsilon ,
    \end{align*}
    where the constant $\delta > 0$ is fixed (independently of $u, v, \varepsilon, \sigma_u, \sigma_v$) so that the free boundaries of $u_t$ and $v_t$ lie within $\varepsilon c/2$-neighborhoods of $\{ x_N = - \sigma_u - \varepsilon \} \cap B_{3/4}\left( \overline{x} \right)$ and $\{ x_N = - \sigma_v  \} \cap B_{3/4}\left( \overline{x} \right)$ respectively, for all $t \in [0, 1]$.

    Since $u_t$ and $v_t$ are strict comparison super/subsolutions for $u$ and $v$ respectively (in the sende of Definition \ref{def:viscoudSolOneSidedTwoPhasePb}), and their free boundary cannot touch a two phase point in $B_{3/4}\left( \overline{x} \right)$ for any $t \in [0, 1]$ (by the choice of $\delta$), proceeding exactly as in \cite[Lemma 3.3]{DeSilva:FreeBdRegularityOnePhase} one can derive \eqref{eqn:HarnackImproveUBranchingFull} and \eqref{eqn:HarnackImproveVBranchingFull}.
\end{itemize}
We are only left to prove \eqref{eqn:HarnackBranchingFullSigmaPrimeOrdered}. Moreover, we only need to check it for case (ii), since in case (i) it is trivially verified. To this aim, it is sufficient to choose the constant $\delta$ so small that the constant in \eqref{eqn:HarnackImproveUBranchingFull} and \eqref{eqn:HarnackImproveVBranchingFull} arising in case (ii) is smaller then $c/4$.

\end{proof}
Now we deal with the coincidence regime. The partial Harnack result in this case is contained in the following
\begin{lemma}\label{lemma:PartialHarnackIneqCoincidence}
Let $(u, v)$ be a viscosity solution of problem \eqref{eqn:MainViscousPb}. There exist constants $\varepsilon_0, \rho_0, c, C, \theta > 0$ (independent of the specific solution and $\varepsilon$) such that, if
\begin{align} \label{eqn:HarnackIneqCoincidenceHp}
    \begin{split}
        & \Gamma_u \left( x_N + \sigma - \varepsilon \right)^+ \le u \le \Gamma_u \left( x_N + \sigma + \varepsilon\right)^+ , \\
        & \Gamma_v \left( x_N + \sigma - \varepsilon\right)^+ \le v \le \Gamma_v \left( x_N + \sigma + \varepsilon\right)^+ , \\
        & \left\vert \Gamma_u -  \sqrt{\Lambda_u} \right\vert \ge C \varepsilon \text{ and } \left\vert \Gamma_v -  \sqrt{\Lambda_v} \right\vert \ge C \varepsilon ,
    \end{split}
\end{align}
for some $\varepsilon < \varepsilon_0$ and $\vert \sigma \vert \le 1/10$, then at least one of the following holds:
\begin{align}
    u \ge \Gamma_u \left( x_N + \sigma - c \varepsilon \right)^+ \text{ and } v \ge \Gamma_v \left( x_N + \sigma - c \varepsilon \right)^+  \quad \text{in } B_{\rho} , \label{eqn:HarnackImproveBelowCoincidence}\\ 
    u \le \Gamma_u \left( x_N + \sigma + (1-c) \varepsilon \right)^+ \text{ and } v \le \Gamma_v \left( x_N + \sigma + (1-c) \varepsilon \right)^+ \quad \text{in } B_{\rho} \label{eqn:HarnackImproveAboveCoincidence}.
\end{align}
\end{lemma}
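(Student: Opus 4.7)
The defining feature of the coincidence regime is the quantitative slope gap $|\Gamma_u - \sqrt{\Lambda_u}|, |\Gamma_v - \sqrt{\Lambda_v}| \ge C\varepsilon$, which, for $C$ large, should forbid one-phase behaviour: every free boundary point in the region of interest must be two-phase, and the identity $\Gamma_u^2 + \Gamma_v^2 = 1$ holds up to a higher-order error. Once this reduction is in hand, the argument collapses to a \emph{one-phase} partial Harnack applied jointly to the scalars $m = \sqrt{u^2+v^2}$ and $q_\eta = \Gamma_u u + \Gamma_v v$ (the viscosity test direction $\eta = (\Gamma_u, \Gamma_v) \in S^{N-1}$). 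This is also the reason why the conclusion offers only two alternatives \eqref{eqn:HarnackImproveBelowCoincidence}--\eqref{eqn:HarnackImproveAboveCoincidence} rather than three as in Lemma \ref{lemma:PartialHarnackIneqBranchingFull}: the ``separation'' alternative \eqref{eqn:HarnackIncreaseDistanceBranching} would require the free boundaries to detach, which the coincidence hypothesis rules out.

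\textbf{Step 1: reduction to the two-phase case.} I would first prove that, for $C$ large and $\varepsilon_0$ small, the free boundary is entirely two-phase in $B_{1/2}$, i.e.\ $\partial\{u>0\}\cap B_{1/2} = \partial\{v>0\}\cap B_{1/2}$. Indeed, at a putative one-phase boundary point of $u$, Lemma \ref{lemma:BlowUpProperties}(i) and Lemma \ref{lemma:flatBlowUpsRightGradient}(i) force the blow-up slope to equal $\sqrt{\Lambda_u}$, while the flatness of $u$ with slope $\Gamma_u$ forces that same blow-up slope to be $\Gamma_u$ up to an $O(\varepsilon)$ error, contradicting $|\Gamma_u - \sqrt{\Lambda_u}| \ge C\varepsilon$ for $C$ sufficiently large; the symmetric argument handles $v$. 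Lemma \ref{lemma:flatBlowUpsRightGradient}(iii) then yields $\Gamma_u^2 + \Gamma_v^2 = 1$, again up to an $O(\varepsilon)$ correction that can be absorbed into $\varepsilon$. Squaring and summing the componentwise flatness \eqref{eqn:HarnackIneqCoincidenceHp} gives the sandwich
\[
(x_N + \sigma - \varepsilon)^+ \le m \le (x_N + \sigma + \varepsilon)^+ \quad\text{and}\quad (x_N + \sigma - \varepsilon)^+ \le q_\eta \le (x_N + \sigma + \varepsilon)^+,
\]
while \eqref{eqn:VectorialCompetitorModulusProperties}--\eqref{eqn:VectorialCompetitorAverageProperties} supply the PDE and gradient conditions needed for $m$ (subharmonic, $|\nabla m|^2 = 1$ at the two-phase boundary) and $q_\eta$ (harmonic, $|\nabla q_\eta|^2 = 1$ at the two-phase boundary).

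\textbf{Step 2: sliding argument with joint barriers.} Following \cite[Lemma 3.3]{DeSilva:FreeBdRegularityOnePhase}, set $\bar x \coloneqq \tfrac{1}{5} e_N$ and split according to the sign of $q_\eta(\bar x) - (\bar x_N + \sigma)$. In the case $q_\eta(\bar x) \ge \bar x_N + \sigma$, the interior Harnack inequality applied to the non-negative harmonic function $q_\eta - (x_N + \sigma - \varepsilon)$ yields a gain of order $\varepsilon$ in $B_{1/20}(\bar x)$, and I would introduce the joint competitors
\[
\tilde u_t \coloneqq \Gamma_u \bigl( (x_N + \sigma - \varepsilon) - c_0\varepsilon (1-w) + t c_0 \varepsilon \bigr)^+, \qquad \tilde v_t \coloneqq \Gamma_v \bigl( (x_N + \sigma - \varepsilon) - c_0\varepsilon (1-w) + t c_0 \varepsilon \bigr)^+,
\]
with $w$ as in \eqref{eqn:partialHarnackFunctionW}. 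Because $\Gamma_u^2 + \Gamma_v^2 = 1$, the weighted sum $\tilde q_t \coloneqq \Gamma_u \tilde u_t + \Gamma_v \tilde v_t$ is a strict comparison subsolution for $q_\eta$ on the annulus (one has $\Delta \tilde q_t > 0$ and $|\nabla \tilde q_t|^2 > 1$ at its zero level set for $c_0$ small). The standard continuity argument in $t$ then yields $\tilde u_t \le u$ and $\tilde v_t \le v$ for every $t \in [0,1]$: by Step 1, any first touching must occur at a two-phase point, contradicting the supersolution property for $q_\eta$ in Definition \ref{def:viscoudSolOneSidedTwoPhasePb}. Taking $t=1$ gives \eqref{eqn:HarnackImproveBelowCoincidence}. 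The case $q_\eta(\bar x) \le \bar x_N + \sigma$ is symmetric: one uses the barrier modulus $\tilde m_t = \sqrt{\tilde u_t^2 + \tilde v_t^2}$ as a strict supersolution for $m$, contradicting the subsolution property for $m$, and obtains \eqref{eqn:HarnackImproveAboveCoincidence}.

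\textbf{Main obstacle.} The only substantive new ingredient beyond Lemma \ref{lemma:PartialHarnackIneqBranchingFull} is Step 1: converting the quantitative slope gap $|\Gamma_u - \sqrt{\Lambda_u}| \ge C\varepsilon$ into a uniform exclusion of one-phase boundary points. This is what fixes the admissible constant $C > 0$ in the statement, and it is the technical heart of the coincidence regime.
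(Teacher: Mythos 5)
Your Step 2 follows essentially the paper's route (split on the value of $q_\eta(\overline{x})$, slide proportional competitors, and use the viscosity conditions on $q_\eta$ and $m$ to exclude two-phase touching points). The problem is Step 1, on which all of Step 2 leans. The claim that the unit-scale flatness \eqref{eqn:HarnackIneqCoincidenceHp} with slope $\Gamma_u$ forces the blow-up slope at a one-phase point to equal $\Gamma_u + O(\varepsilon)$ is false: the trapping constrains $u$ in $B_1$ but says nothing about the infinitesimal slope at a free boundary point. Genuine one-phase behaviour (slope $\sqrt{\Lambda_u}$) at scales $r\lesssim 1/C$ around such a point is perfectly compatible with \eqref{eqn:HarnackIneqCoincidenceHp}, since the discrepancy it creates is of order $(\sqrt{\Lambda_u}-\Gamma_u)\,r\lesssim C\varepsilon\cdot (1/C)=\varepsilon$, and $C$ is a fixed constant. (In addition, Lemma \ref{lemma:flatBlowUpsRightGradient} concerns minimizers and presupposes that the blow-up is a half-plane, neither of which is available in this purely viscosity statement.) So you have not shown that $\partial\{u>0\}\cap B_{1/2}=\partial\{v>0\}\cap B_{1/2}$, and I do not believe this identity holds; the lemma's conclusion only prevents the boundaries from separating at the macroscopic scale $\rho$, not pointwise.

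Without Step 1 your sliding argument breaks precisely at one-phase touching points, and it does so asymmetrically. One must first rule out the sub-cases $\Gamma_u\ge\sqrt{\Lambda_u}+C\varepsilon$ and $\Gamma_v\le\sqrt{\Lambda_v}-C\varepsilon$ of \eqref{eqn:HarnackIneqCoincidenceHp} by a separate barrier argument (a step you also skip), leaving $\Gamma_u\le\sqrt{\Lambda_u}-C\varepsilon$ and $\Gamma_v\ge\sqrt{\Lambda_v}+C\varepsilon$. Then $\tilde u_t$, whose slope is $\approx\Gamma_u<\sqrt{\Lambda_u}$, is a strict comparison supersolution but not a strict comparison subsolution for $u$ in the sense of Definition \ref{def:viscoudSolOneSidedTwoPhasePb}; hence in your ``from below'' case a first touching of $u$ by $\tilde u_t$ at a one-phase point of $u$ yields no contradiction, and symmetrically $\tilde v_t$ cannot be slid down past one-phase points of $v$. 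This is exactly why the paper pairs the functions asymmetrically: it slides $(v_t,q_t)$ upward (one-phase touching of $v$ is excluded because $\Gamma_v>\sqrt{\Lambda_v}$, two-phase touching by the $q_\eta$-condition) and $(u_t,m_t)$ downward (one-phase touching of $u$ is excluded because $\Gamma_u<\sqrt{\Lambda_u}$, two-phase touching by the $m$-condition), and then transfers the gain to the remaining function. Your proof needs either a correct replacement for Step 1 or this asymmetric pairing.
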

\begin{remark}
Notice that in the coincidence regime \eqref{eqn:HarnackIneqCoincidenceHp}, the partial Harnack inequality allows to move the free boundaries together and in the same direction, thus removing the possibility of detachment as described in \eqref{eqn:HarnackIncreaseDistanceBranching}.
\end{remark}
\begin{remark}\label{rmk:BddGammauvCoincidence}
    In the coincidence regime \eqref{eqn:HarnackIneqCoincidenceHp}, as long as $0 < \sqrt{\Lambda_u}, \sqrt{\Lambda_v} < 1$ strictly, one can choose $\varepsilon_0$ so small that the following bounds hold:
    \begin{align*}
        \begin{split}
            & \sqrt{\Lambda_v}/2 \le \Gamma_u \le \sqrt{\Lambda_u} , \\
            & \sqrt{\Lambda_v} \le \Gamma_v \le 1 .
        \end{split}
    \end{align*}
    The proof of this fact can be carried out by contradiction, following \cite[Lemma 4.1]{FerreriVelichov2023:BernoulliInternalInclusion}. Actually, the bounds from above and below for $\Gamma_u$ and $\Gamma_v$ respectively, are also remarked at the beginning of the proof of Lemma \ref{lemma:PartialHarnackIneqCoincidence}.
\end{remark}
\begin{proof}[Proof of Lemma \ref{lemma:PartialHarnackIneqCoincidence}.]
Analogously as for Lemma \ref{lemma:PartialHarnackIneqBranching}, we only give the proof in the case $\sigma = 0$.

To begin with, we notice that neither of the cases $\Gamma_u \ge \sqrt{\Lambda_u} + C \varepsilon$ or $\Gamma_v \le \sqrt{\Lambda_v} - C \varepsilon$ in \eqref{eqn:HarnackIneqCoincidenceHp} can actually occur. Indeed, let us suppose by contradiction that $\Gamma_u \ge \sqrt{\Lambda_u} + C \varepsilon$ (the other case can be dealt with analogously). Then, for sufficiently large $C$ depending only on $\varepsilon_0$, one can slightly deform the competitor $\Gamma_u \left( x_N - \varepsilon\right)^+$ to a function that touches $u$ from below at a point of the free boundary, but has gradient strictly larger then $\sqrt{\Lambda_u}$ (see for instance \cite[Lemma 4.1]{FerreriVelichov2023:BernoulliInternalInclusion}). A contradiction.

We are left to consider the case
\[
\Gamma_u \le  \sqrt{\Lambda_u} - C \varepsilon \quad \text{and} \quad \Gamma_v \ge  \sqrt{\Lambda_v} + C \varepsilon .
\]
The idea is to proceed similarly as in the vectorial Bernoulli problem \cite{DeSilvaTortone2020:ViscousVectorialBernoulli}. Since this strategy is similar to the one used for points (i), (ii) and (iii) in the proof of Lemma \ref{lemma:PartialHarnackIneqBranching}, we will only sketch the common parts and highlight the main differences.

Let us consider the point $\overline{x} \coloneqq 1/5 e_N$ and the competitors $m(x)$ and $q(x)$ as in \eqref{eqn:vectorialBernoulliCompetitors} but with $\Gamma_u$ and $\Gamma_v$ in place of $\sqrt{\lambda_u}$ and $\sqrt{\Lambda_v}$ respectively, satisfying properties analogous to \eqref{eqn:VectorialCompetitorModulusProperties}, \eqref{eqn:VectorialCompetitorAverageProperties}.

Now distinguish two mutually exclusive cases.
\begin{itemize}
    \item[(i)] Suppose that
    \[
    q\left( \overline{x} \right) \le p\left( \overline{x} \right) + \varepsilon,
    \]
    where $p(x)$ is defined in \eqref{eqn:partialHarnackCompetitors}. Notice that thanks to \eqref{eqn:HarnackIneqCoincidenceHp} we also have
    \begin{align*}
    & u\left( \overline{x} \right) \le \Gamma_u \left( \overline{x_N} + \varepsilon\right)^+ \le \sqrt{\Lambda_u} \left( \overline{x_N} + \varepsilon\right) - C \varepsilon  \left( \overline{x_N} + \varepsilon\right) = \\
    & = \sqrt{\Lambda_u} \overline{x_N} + \varepsilon \left( \sqrt{\Lambda_u} - C \overline{x_N} - C \varepsilon \right) \le \sqrt{\Lambda_u} \overline{x_N} = p_u\left( \overline{x_N} \right) + \varepsilon,
    \end{align*}
    where the last inequality follows e.g. for $C \ge 5\sqrt{\Lambda_u}$.

    Hence, we can proceed similarly as in case (i) of Lemma \ref{lemma:PartialHarnackIneqBranching}, proving \eqref{eqn:HarnackImproveAboveCoincidence}.

    \item[(ii)] Now suppose that
    \[
    q\left( \overline{x} \right) \ge p\left( \overline{x} \right) + \varepsilon.
    \]
    Noticing that
    \begin{align*}
    & v\left( \overline{x} \right) \ge \Gamma_u \left( \overline{x_N} - \varepsilon\right)^+ \ge \sqrt{\Lambda_u} \left( \overline{x_N} - \varepsilon\right) + C \varepsilon  \left( \overline{x_N} - \varepsilon\right) = \\
    & = \sqrt{\Lambda_u} \overline{x_N} + \varepsilon \left( C \overline{x_N} - \sqrt{\Lambda_u} - C \varepsilon \right) \ge \sqrt{\Lambda_u} \overline{x_N} = p_u\left( \overline{x_N} \right) + \varepsilon,
    \end{align*}
    for $C \ge \sqrt{\Lambda_u}/(1/5 - \varepsilon_0)$, we can proceed similarly as in case (ii) of Lemma \ref{lemma:PartialHarnackIneqBranching}, deriving \eqref{eqn:HarnackImproveBelowCoincidence}.
\end{itemize}
\end{proof}

\subsection{Improvement of flatness}\label{subsection:ImprovFlatness}

In this section we prove an improvement of flatness result for problem \eqref{eqn:MainViscousPb}, following the strategy of \cite{DeSilva:FreeBdRegularityOnePhase}. More precisely, our aim is to show the following
\begin{theorem}\label{thm:ImprovFlat}
    Let $(u, v)$ be a viscosity solution of problem \eqref{eqn:MainViscousPb} with $ 0 \in \partial\{ u>0 \} \cap \partial\{ v>0 \}$. Suppose that for some constants $\Gamma_u, \Gamma_v > 0$ and $\nu \in S^{N-1}$
\begin{align} \label{eqn:ImprovFlatnessHp}
    \begin{split}
        \Gamma_u (x \cdot \nu - \varepsilon)^+ & \le u \le \Gamma_u (x \cdot \nu + \varepsilon)^+ \quad \text{in } B_1, \\
        \Gamma_v (x \cdot \nu - \varepsilon)^+ & \le v \le \Gamma_v (x \cdot \nu + \varepsilon)^+ \quad \text{in } B_1, \\
        & \Gamma_u^2 + \Gamma_v^2 = 1.
    \end{split}
\end{align}
    Then, there exist constants $0< \rho, \sigma < 1$, $\varepsilon_0, C>0$ (independent of $u, \varepsilon, \nu, \Gamma_u, \Gamma_v$), a vector $\nu' \in S^{N-1}$ and real numbers $\Gamma_u', \Gamma_v'$ such that if $\varepsilon \le \varepsilon_0$
    \begin{align}
        & \Gamma_u^{'2} + \Gamma_v^{'2} = 1 , \label{eqn:ImprovFlatThmSum1}\\
        & \vert \Gamma_u - \Gamma_u' \vert + \vert \Gamma_v - \Gamma_v' \vert + \vert \nu - \nu' \vert \le C \varepsilon , \label{eqn:ImprofFlatThmErrors}\\
        & \Gamma_u' (x \cdot \nu' - \rho \sigma \varepsilon)^+ \le u \le \Gamma_u' (x \cdot \nu' + \rho \sigma \varepsilon)^+ \quad \text{in } B_{\rho}, \label{eqn:ImprovFlatThmU} \\
        & \Gamma_v' (x \cdot \nu' - \rho \sigma \varepsilon)^+ \le v \le \Gamma_v' (x \cdot \nu' + \rho \sigma \varepsilon)^+ \quad \text{in } B_{\rho} \label{eqn:ImprovFlatThmV}.
    \end{align}
\end{theorem}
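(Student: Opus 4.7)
I plan to follow the compactness/contradiction scheme of \cite{DeSilva:FreeBdRegularityOnePhase} adapted to track two ordered phases. Suppose the conclusion fails: there exist $\varepsilon_k \to 0^+$, viscosity solutions $(u_k, v_k)$, unit vectors $\nu_k \in S^{N-1}$ and constants $\Gamma_{u,k}, \Gamma_{v,k} > 0$ with $\Gamma_{u,k}^2 + \Gamma_{v,k}^2 = 1$ satisfying \eqref{eqn:ImprovFlatnessHp} but violating \eqref{eqn:ImprovFlatThmU}--\eqref{eqn:ImprovFlatThmV} for every admissible $(\nu', \Gamma_u', \Gamma_v')$. After a rotation, take $\nu_k = e_N$ and, up to subsequences, $\Gamma_{u,k} \to \Gamma_u^\infty$, $\Gamma_{v,k} \to \Gamma_v^\infty$, with $(\Gamma_u^\infty)^2 + (\Gamma_v^\infty)^2 = 1$.

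The first main step is to obtain uniform H\"older estimates for the rescaled ``graph'' functions
\begin{equation*}
\tilde u_k := \frac{u_k - \Gamma_{u,k}\, x_N}{\Gamma_{u,k}\, \varepsilon_k}, \qquad \tilde v_k := \frac{v_k - \Gamma_{v,k}\, x_N}{\Gamma_{v,k}\, \varepsilon_k},
\end{equation*}
defined on $\{u_k > 0\} \cap B_1$ and $\{v_k > 0\} \cap B_1$ respectively. Iterating the partial Harnack inequalities on a dyadic sequence of balls yields an equi-H\"older bound on $B_{1/2}$ up to the respective free boundaries: one invokes Lemma~\ref{lemma:PartialHarnackIneqCoincidence} whenever $|\Gamma_{u,k} - \sqrt{\Lambda_u}| + |\Gamma_{v,k} - \sqrt{\Lambda_v}| \gtrsim \varepsilon_k$ (the coincidence regime, where the two free boundaries move together) and Lemma~\ref{lemma:PartialHarnackIneqBranchingFull} otherwise, the order $\sigma_u \ge \sigma_v$ being preserved along the iteration and inherited by the limit. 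By Arzel\`a--Ascoli and a Hausdorff-convergence argument for the graphs, up to a subsequence $\tilde u_k \to \tilde u$ and $\tilde v_k \to \tilde v$ uniformly on compacts of $B_{1/2}^+ := B_{1/2} \cap \{x_N > 0\}$ and continuously on $\{x_N = 0\}$.

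The second step is to identify and solve the linearized problem for $(\tilde u, \tilde v)$. Interior harmonicity $\Delta \tilde u = \Delta \tilde v = 0$ in $B_{1/2}^+$ is standard. Linearizing the two-phase condition $|\nabla u|^2 + |\nabla v|^2 = 1$ around the planar profile, together with $u_k = v_k = 0$ on the free boundary, yields in the (generic) coincidence case the transmission conditions
\begin{equation*}
\tilde u = \tilde v \quad\text{and}\quad (\Gamma_u^\infty)^2\, \partial_N \tilde u + (\Gamma_v^\infty)^2\, \partial_N \tilde v = 0 \qquad \text{on } \{x_N = 0\},
\end{equation*}
while in the branching case $(\Gamma_u^\infty, \Gamma_v^\infty) = (\sqrt{\Lambda_u}, \sqrt{\Lambda_v})$ one recovers two independent Neumann problems $\partial_N \tilde u = \partial_N \tilde v = 0$. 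Setting $w := (\Gamma_u^\infty)^2 \tilde u + (\Gamma_v^\infty)^2 \tilde v$ and $z := \tilde u - \tilde v$, even/odd reflection across $\{x_N = 0\}$ produces harmonic extensions of $w$ and $z$ to $B_{1/2}$, hence smooth, with Taylor expansions $w(x) = w(0) + a \cdot x + O(|x|^2)$ (with $a \cdot e_N = 0$) and $z(x) = b\, x_N + O(|x|^2)$. Recombining yields linear approximations of $\tilde u, \tilde v$ sharing the same tangential gradient but with distinct normal parts; scaled back, these provide a common new direction $\nu'$ close to $e_N$ and new coefficients $\Gamma_u', \Gamma_v'$ with $(\Gamma_u')^2 + (\Gamma_v')^2 = 1$ satisfying \eqref{eqn:ImprovFlatThmSum1}--\eqref{eqn:ImprofFlatThmErrors} at scale $\rho$, contradicting the failure assumption.

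The main obstacle is the rigorous derivation of the linearized transmission condition at the two-phase points, because the set $\partial\{u_k > 0\} \cap \partial\{v_k > 0\}$ need not be smooth before the limit and may even degenerate in the branching regime. Following \cite{DeSilvaTortone2020:ViscousVectorialBernoulli}, this is bypassed by testing against the auxiliary modulus $m = \sqrt{u^2 + v^2}$ and averages $q_\eta = (u, v) \cdot \eta$ of Definition~\ref{def:viscoudSolOneSidedTwoPhasePb}, which are defined uniformly across both regimes and admit viscosity comparison with smooth strict sub/supersolutions. A secondary delicate point is the unified treatment of the two regimes in the compactness step, which is precisely why Lemma~\ref{lemma:PartialHarnackIneqBranchingFull} is formulated to admit $\sigma_u \ne \sigma_v$.
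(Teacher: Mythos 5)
Your overall architecture matches the paper's: argue by contradiction, split into the branching regime ($\Gamma_{u,k},\Gamma_{v,k}$ within $O(\varepsilon_k)$ of $\sqrt{\Lambda_u},\sqrt{\Lambda_v}$) and the coincidence regime, obtain compactness of the linearized functions by iterating the two partial Harnack lemmas, identify a limit problem in the viscosity sense via the auxiliary functions $m=\sqrt{u^2+v^2}$ and the averages $q_\eta$, and read off $(\nu',\Gamma_u',\Gamma_v')$ from the regularity of the limit. The treatment of the coincidence regime (transmission conditions $\tilde u=\tilde v$, $(\Gamma_u^\infty)^2\partial_N\tilde u+(\Gamma_v^\infty)^2\partial_N\tilde v=0$, smooth limit) is correct and agrees with the paper.

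There is, however, a genuine gap in your description of the branching regime. You claim that when $(\Gamma_u^\infty,\Gamma_v^\infty)=(\sqrt{\Lambda_u},\sqrt{\Lambda_v})$ the limit pair solves ``two independent Neumann problems'' and is therefore smooth after reflection, with Taylor remainder $O(|x|^2)$. This is false: in the branching regime the two free boundaries may coincide on part of $\{x_N=0\}$ and separate on another part, and which alternative occurs is not known before passing to the limit. The correct limit problem is the \emph{one-sided two-membrane problem} \eqref{eqn:ViscousOneSidedTwoMembranePb}: $\tilde u\ge\tilde v$ on $\{x_N=0\}$ with sign conditions $\partial_N\tilde u\le 0\le\partial_N\tilde v$, independent Neumann conditions only on the separated set $\{\tilde u>\tilde v\}$, and the transmission condition $\Lambda_u\partial_N\tilde u+\Lambda_v\partial_N\tilde v=0$ on the contact set $\{\tilde u=\tilde v\}$. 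Consequently, while the combination $w=\Lambda_u\tilde u+\Lambda_v\tilde v$ does satisfy a pure Neumann condition and extends harmonically by even reflection, the difference $z=\tilde u-\tilde v$ solves a thin obstacle (Signorini) problem: $z\ge 0$, $\partial_N z\le 0$, $z\,\partial_N z=0$ on $\{x_N=0\}$. Neither even nor odd reflection of $z$ is harmonic, $z$ is only $C^{1,1/2}$ up to $\{x_N=0\}$, and the expansion is $z(x)=\nabla z(0)\cdot x+O(|x|^{3/2})$ — the $3/2$-homogeneous Signorini profile genuinely occurs. Your $O(|x|^2)$ expansion would yield a quadratic improvement of flatness that is simply not available here; it is precisely this $C^{1,1/2}$ obstruction that limits Theorem \ref{thm:C1aRegularity} to $\alpha<1/2$ and makes the sharp exponent $1/2$ in Theorem \ref{thm:SharpRegularity} require the separate frequency-function argument of Section \ref{section:SharpRegularity}. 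The fix is to replace your smoothness claim in the branching case by the $C^{1,1/2}$ a priori estimate for the two-membrane problem (harmonic part plus thin-obstacle part, as in Remark \ref{rmk:ViscousLimitPbsRegularity}), which still gives $|\tilde u(x)-\nabla\tilde u(0)\cdot x|\le C\rho^{3/2}$ in $B_\rho$ and hence suffices to close the contradiction with the decay rate $\rho^{1+\alpha}$, $\alpha<1/2$.
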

The proof is carried out by contradiction and is divided into three main steps. Given sequences $\Gamma_{u, n}, \Gamma_{v, n}>0$, $\varepsilon_n \to 0^+$ and $(u_n, v_n)$ solutions of problem \eqref{eqn:MainViscousPb} satisfying the hypotheses \eqref{eqn:ImprovFlatnessHp}, depending on the regime one can introduce the following functions.
\begin{itemize}
    \item[(i)] If, up to a subsequence, 
    \begin{equation}\label{eqn:ImprovFlatRegimeBranching}
    \left\vert \Gamma_{u, n} - \sqrt{\Lambda_u} \right\vert \le C \varepsilon_n \quad \text{and} \quad  \left\vert \Gamma_{v, n} - \sqrt{\Lambda_v} \right\vert \le C \varepsilon_n,
    \end{equation}
    where $C>0$ is the constant of Lemma \ref{lemma:PartialHarnackIneqCoincidence}, consider
    \begin{equation}\label{eqn:TildeunTildevnDefBranching}
      \tilde{u}_n \coloneqq \frac{u_n - \sqrt{\Lambda_u} x_N}{\sqrt{\Lambda_u} \varepsilon_n} \text{ in } \{ u>0 \} \quad \text{and} \quad \tilde{v}_n \coloneqq \frac{v_n - \sqrt{\Lambda_v} x_N}{\sqrt{\Lambda_v} \varepsilon_n} \text{ in } \{ v>0 \} .
    \end{equation}
    Notice that thanks to \eqref{eqn:ImprovFlatnessHp} and \eqref{eqn:ImprovFlatRegimeBranching} we have
    \[
    -(C+1) \le \tilde{u}_n \le 1 \quad \text{and} \quad -(C+1) \le \tilde{v}_n \le 1.
    \]

    \item[(ii)] If, on the other hand, 
    \begin{equation}\label{eqn:ImprovFlatRegimeCoincidence}
     \left\vert \Gamma_{u, n} - \sqrt{\Lambda_u} \right\vert \ge C \varepsilon_n \quad \text{and} \quad  \left\vert \Gamma_{v, n} - \sqrt{\Lambda_v} \right\vert \ge C \varepsilon_n,
    \end{equation}
    with $C>0$ again the constant from Lemma \ref{lemma:PartialHarnackIneqCoincidence}, consider the functions
    \begin{equation}\label{eqn:TildeunTildevnDefCoincidence}
      \tilde{u}_n \coloneqq \frac{u_n - \Gamma_{u, n} x_N}{\Gamma_{u, n} \varepsilon_n} \text{ in } \{ u_n>0 \} \quad \text{and} \quad \tilde{v}_n \coloneqq \frac{v_n - \Gamma_{v, n} x_N}{\Gamma_{v, n} \varepsilon_n} \text{ in } \{ u_n>0 \} .
    \end{equation}
    Notice that in this case, both $\tilde{u}_n$ and $\tilde{v}_n$ are defined on the positivity set of $u_n$. The linearization of $v_n$ over the domain of positivity of $u_n$ is only possible in the coincidence regime.

    Thanks to \eqref{eqn:ImprovFlatnessHp} and the condition $\{ v_n>0 \} \subset \{ u_n>0 \}$, we have
    \begin{equation}\label{eqn:ImprovFlatTildeMUnifBound}
        -1 \le \tilde{u}_n \le 1 \quad \text{and} \quad -1 \le \tilde{v}_n \le 1.
    \end{equation}
\end{itemize}
Then, the idea is first of all to prove a compactness result for the functions $\tilde{u}_n$ and $\tilde{v}_n$ or $\tilde{m}_n$, which are the content of Lemmas \ref{lemma:compactnessBranching} and \ref{lemma:compactnessCoincidence} below respectively. After this, one can study the regularity of the limit functions, which turn out to be solutions (in the viscous sense) of a limit linearized problem (see Lemmas \ref{lemma:linearizationBranching} and \ref{lemma:linearizationCoincidence}). Then, one can use the regularity of the limit functions to prove Theorem \ref{thm:ImprovFlat} by contradiction, and this is the content of section \ref{subsubsec:improvement}.
\begin{remark}\label{rmk:ImprovFlatEstimateLinearizedFunctions}
    Similarly as in \cite{ChangLaraSavin:BoundaryRegularityOnePhase}, we remark that Theorem \ref{thm:ImprovFlat} implies the existence of universal constants $\varepsilon_0, C>0$ such that if
    \[
    \varepsilon_n \| \tilde{u}_n \|_{L^{\infty}\left( \Omega_u^+ \cap B_1 \right)} \le \varepsilon_0 \quad \text{and} \quad \varepsilon_n \| \tilde{v}_n \|_{L^{\infty}\left( \Omega_v^+ \cap B_1 \right)} \le \varepsilon_0
    \]
    then
    \begin{align*}
        \| \tilde{u}_n \|_{C^{1, \alpha}\left( \overline{\Omega_u^+} \cap B_{1/2} \right)} \le C \| \tilde{u}_n \|_{L^{\infty}\left( \Omega_u^+ \cap B_1 \right)} \quad \text{and} \quad \| \tilde{v}_n \|_{C^{1, \alpha}\left( \overline{\Omega_v^+} \cap B_{1/2} \right)} \le C \| \tilde{v}_n \|_{L^{\infty}\left( \Omega_v^+ \cap B_1 \right)} .
    \end{align*}
    This estimate will be one of the key points in proving the sharp regularity, namely Theorem \ref{thm:SharpRegularity}.
\end{remark}

\subsubsection{Compactness}\label{subsubsec:compactness}
The aim of this section is to prove a compactness result for the linearizing sequences \eqref{eqn:TildeunTildevnDefBranching} and \eqref{eqn:TildeunTildevnDefCoincidence}. More precisely, in the branching regime, our aim is to prove the following
\begin{lemma}\label{lemma:compactnessBranching}
    Let $\varepsilon_n \to 0^+$, $\Gamma_{u, n}, \Gamma_{v, n}>0$, and $(u_n, v_n)$ solutions of problem \eqref{eqn:MainViscousPb} satisfying  \eqref{eqn:ImprovFlatnessHp}. Moreover suppose that \eqref{eqn:ImprovFlatRegimeBranching} holds and let the functions $\tilde{u}_n$, $\tilde{v}_n$ be defined as in \eqref{eqn:TildeunTildevnDefBranching}.

    Then, there exist two functions $\tilde{u}_{\infty}$, $\tilde{v}_{\infty} \in C^{0, \alpha}\left( \{ x_N \ge 0 \} \cap B_{1/2} \right)$ for some $0 < \alpha \le 1$ such that:
    \begin{itemize}
        \item[(i)] $\tilde{u}_n \to \tilde{u}_{\infty}$ and $\tilde{v}_n \to \tilde{v}_{\infty}$ in $C^{0, \alpha}\left( \{ x_N \ge \delta \} \cap B_{1/2} \right)$ for all $\delta > 0$.

        \item[(ii)] In the Hausdorff distance,
        \begin{align*}
            & \Theta_{u, n} \coloneqq \left\{ \left(x, \tilde{u}_n(x)\right) \in \R^{N+1} : x \in \{ u_n>0 \} \right\} \to \Theta_{u, \infty} \coloneqq \left\{ \left(x, \tilde{u}_{\infty}(x)\right) \in \R^{N+1} : x \in \{ x_N\ge0 \} \right\}, \\
            & \Theta_{v, n} \coloneqq \left\{ \left(x, \tilde{v}_n(x)\right) \in \R^{N+1} : x \in \{ v_n>0 \} \right\} \to \Theta_{v, \infty} \coloneqq \left\{ \left(x, \tilde{v}_{\infty}(x)\right) \in \R^{N+1} : x \in \{ x_N\ge0 \} \right\}.
        \end{align*}
    \end{itemize}
\end{lemma}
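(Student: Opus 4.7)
The plan is to carry out a DeSilva-type linearization argument: iterate the partial Harnack inequality of Lemma \ref{lemma:PartialHarnackIneqBranchingFull} to extract a uniform interior Hölder bound for $\tilde u_n$ and $\tilde v_n$, then conclude by Arzelà--Ascoli. Fix $\delta>0$ and a point $x_0\in\{x_N\ge\delta\}\cap B_{1/2}$, and consider balls of radius $r_k=\rho_0^k r_0$ centered at $x_0$ with $r_0\sim\delta$. The hypotheses of Lemma \ref{lemma:PartialHarnackIneqBranchingFull} are satisfied at the initial scale $r_0$ thanks to the flatness \eqref{eqn:ImprovFlatnessHp} together with the branching-regime condition \eqref{eqn:ImprovFlatRegimeBranching}: the latter allows one to rewrite the trapping of $u_n$ by $\Gamma_{u,n}(x\cdot\nu\pm\varepsilon_n)^+$ as a trapping by $\sqrt{\Lambda_u}(x_N+\sigma_{u,0}\pm\varepsilon_n)^+$ for some $|\sigma_{u,0}|\le C\varepsilon_n$, and analogously for $v_n$ with $\sigma_{v,0}\le\sigma_{u,0}$.

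Iteratively applying Lemma \ref{lemma:PartialHarnackIneqBranchingFull} produces updated ordered shifts $\sigma_{u,k}\ge\sigma_{v,k}$ with $|\sigma_{u,k}-\sigma_{u,k-1}|,|\sigma_{v,k}-\sigma_{v,k-1}|\lesssim (1-c)^k\varepsilon_n$ and the improved trapping
\begin{align*}
\sqrt{\Lambda_u}\bigl(x_N+\sigma_{u,k}\bigr)^+ &\le u_n \le \sqrt{\Lambda_u}\bigl(x_N+\sigma_{u,k}+(1-c)^k\varepsilon_n\bigr)^+,\\
\sqrt{\Lambda_v}\bigl(x_N+\sigma_{v,k}\bigr)^+ &\le v_n \le \sqrt{\Lambda_v}\bigl(x_N+\sigma_{v,k}+(1-c)^k\varepsilon_n\bigr)^+,
\end{align*}
in $B_{r_k}(x_0)$. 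The total shifts $\sigma_{u,k}-\sigma_{u,0}$, $\sigma_{v,k}-\sigma_{v,0}$ remain of order $\varepsilon_n$ by geometric summation, so the constraint $|\sigma_{u,k}|,|\sigma_{v,k}|\le 1/10$ is preserved, and the iteration can be run as long as $r_k\gtrsim\varepsilon_n$, i.e.\ for any $k$ once $n$ is large. In the linearized variables this reads $\mathrm{osc}_{B_{r_k}(x_0)}\tilde u_n+\mathrm{osc}_{B_{r_k}(x_0)}\tilde v_n\le C(1-c)^k$, which, with $\alpha:=\log(1-c)/\log\rho_0\in(0,1)$, upgrades by standard interpolation between dyadic scales to a uniform $C^{0,\alpha}$ bound for $\tilde u_n,\tilde v_n$ on $\{x_N\ge\delta\}\cap B_{1/2}$, independent of $n$ once $\varepsilon_n\ll\delta$.

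Combined with the uniform bound $-(C+1)\le\tilde u_n,\tilde v_n\le 1$ noted after \eqref{eqn:TildeunTildevnDefBranching}, Arzelà--Ascoli and a diagonal extraction over $\delta_j=1/j$ yield a single subsequence along which $\tilde u_n,\tilde v_n$ converge in $C^{0,\alpha'}$ for every $\alpha'<\alpha$ on every $\{x_N\ge\delta\}\cap B_{1/2}$; since the Hölder seminorm bound is uniform in $\delta$, the limits $\tilde u_\infty,\tilde v_\infty$ extend to $C^{0,\alpha}(\{x_N\ge 0\}\cap B_{1/2})$, giving item (i). Item (ii) follows from (i) together with the observation that the flatness \eqref{eqn:ImprovFlatnessHp} confines the free boundaries $\partial\{u_n>0\}$, $\partial\{v_n>0\}$ to an $O(\varepsilon_n)$-neighborhood of $\{x_N=0\}$, so the domains of $\tilde u_n,\tilde v_n$ exhaust $\{x_N>0\}\cap B_{1/2}$ and the uniform $L^\infty$ bound controls the graphs in the remaining thin strip near $\{x_N=0\}$. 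The main technical subtlety is the bookkeeping in the iteration: one must verify that the successive shifts $\sigma_{u,k},\sigma_{v,k}$ stay ordered and uniformly bounded, and that the improved flatness at each step fits into the hypotheses of Lemma \ref{lemma:PartialHarnackIneqBranchingFull} at the next step — which is where the geometric decay of the shift increments, inherited from the geometric decay of the widths, is used crucially.
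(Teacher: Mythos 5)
Your overall architecture is the same as the paper's: iterate the partial Harnack inequality of Lemma \ref{lemma:PartialHarnackIneqBranchingFull} to get a uniform H\"older-type oscillation decay, then conclude by Ascoli--Arzel\`a and a diagonal extraction. However, the execution has two genuine gaps.

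First, the bookkeeping of the shifts is wrong. In the hypothesis of Lemma \ref{lemma:PartialHarnackIneqBranchingFull} the numbers $\sigma_u,\sigma_v$ are measured relative to the current scale: when you pass from $B_{r_k}(x_0)$ to $B_{r_{k+1}}(x_0)=B_{\rho_0 r_k}(x_0)$ and renormalize, each shift is divided by $\rho_0$, so for a center $x_0$ at distance $d>0$ from the free boundaries the shift grows like $d/r_k$ and the constraint $|\sigma_{u,k}|,|\sigma_{v,k}|\le 1/10$ fails as soon as $r_k\lesssim d$ --- regardless of how large $n$ is. Your claim that the shifts only move by $O(\varepsilon_n)$ ``by geometric summation'' confuses the unrescaled translation of the free boundary with the rescaled parameter in the lemma, and the assertion that ``the iteration can be run for any $k$ once $n$ is large'' is false. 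The iteration must stop when a shift exceeds $1/10$, and one must then distinguish which boundary has left the ball: if both, continue with the interior Harnack inequality for the harmonic functions; if only $\partial\{v_n>0\}$ has left, continue with the one-phase partial Harnack of De Silva applied to $u_n$. This case analysis is exactly the content of the paper's Lemma \ref{lemma:HolderTildeunTildevnBranching} (its ``case 2''), and without it the oscillation decay across all scales is not established.

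Second, starting the iteration at radius $r_0\sim\delta$ destroys the uniformity in $\delta$ that you later invoke: the decay $\mathrm{osc}_{B_{r_k}(x_0)}\tilde u_n\le C(1-c)^k$ with $r_k=\rho_0^k r_0$ translates into $|\tilde u_n(x)-\tilde u_n(x_0)|\le C\,\delta^{-\alpha}|x-x_0|^{\alpha}$, so the H\"older seminorm blows up as $\delta\to0$ and the limits cannot be extended to $C^{0,\alpha}(\{x_N\ge0\}\cap B_{1/2})$ this way. The estimate has to be anchored at a \emph{fixed} macroscopic scale and centered at points of $\overline{\Omega_{u_n}}$ and $\overline{\Omega_{v_n}}$ (including free boundary points), and pushed down to the scale $\varepsilon_n/\varepsilon_0$, as in \eqref{eqn:HolderTildeunBranching}--\eqref{eqn:HolderTildevnBranching}. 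This is also what part (ii) requires: Hausdorff convergence of the graphs over the thin strip $\{0\le x_N\lesssim\delta\}$ does not follow from an $L^\infty$ bound alone (which would permit wild oscillation of $\tilde u_n$ there), but from the equicontinuity at scales above $\varepsilon_n$ near the free boundary, which propagates the interior values of $\tilde u_n,\tilde v_n$ into the strip.
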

The (similar) result in the coincidence regime is the following
\begin{lemma}\label{lemma:compactnessCoincidence}
    Let $\varepsilon_n \to 0^+$, $\Gamma_{u, n}, \Gamma_{v, n}>0$, and $(u_n, v_n)$ solutions of problem \eqref{eqn:MainViscousPb} satisfying  \eqref{eqn:ImprovFlatnessHp}. Moreover suppose that \eqref{eqn:ImprovFlatRegimeCoincidence} holds and let the functions $\tilde{u}_n$, $\tilde{v}_n$ be defined as in \eqref{eqn:TildeunTildevnDefCoincidence}.

    Then, there exist functions $\tilde{u}_{\infty}$, $\tilde{v}_{\infty} \in C^{0, \alpha}\left( \{ x_N \ge 0 \} \cap B_{1/2} \right)$ for some $0 < \alpha \le 1$ such that:
    \begin{itemize}
        \item[(i)] $\tilde{u}_n \to \tilde{u}_{\infty}$ and $\tilde{v}_n \to \tilde{v}_{\infty}$ in $C^{0, \alpha}\left( \{ x_N \ge \delta \} \cap B_{1/2} \right)$ for all $\delta > 0$.

        \item[(ii)] $\tilde{u}_{\infty} = \tilde{v}_{\infty}$ on $\{ x_N = 0 \} \cap B_{1/2}$.

        \item[(iii)] In the Hausdorff distance,
        \begin{align*}
            & \Theta_{u, n} \to \Theta_{u, \infty}, \\
            & \Theta_{v, n} \to \Theta_{v, \infty} .
        \end{align*}
    \end{itemize}
\end{lemma}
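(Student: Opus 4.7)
The plan is to follow the De Silva-style argument of \cite{DeSilva:FreeBdRegularityOnePhase}, using the coincidence-regime partial Harnack (Lemma \ref{lemma:PartialHarnackIneqCoincidence}) to derive a uniform H\"older estimate for both $\tilde{u}_n$ and $\tilde{v}_n$. The crucial feature here is that Lemma \ref{lemma:PartialHarnackIneqCoincidence} has only the two alternatives \eqref{eqn:HarnackImproveBelowCoincidence} and \eqref{eqn:HarnackImproveAboveCoincidence}, in both of which the two free boundaries are shifted \emph{together} by a single common $\sigma$ — there is no detachment option analogous to \eqref{eqn:HarnackIncreaseDistanceBranching}. This tight coupling is what will ultimately force $\tilde{u}_\infty = \tilde{v}_\infty$ on $\{x_N = 0\}$.

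Concretely, starting from any free boundary base point $x_0 \in \partial\{u_n>0\}\cap \partial\{v_n>0\}$ near $\{x_N=0\}\cap B_{1/2}$, I would iterate Lemma \ref{lemma:PartialHarnackIneqCoincidence} dyadically on balls $B_{\rho^k}(x_0)$: each step produces, for both $u_n$ and $v_n$, the same shift $\sigma_k$ with $|\sigma_{k+1}-\sigma_k|\le(1-c)^k\varepsilon_n$ and improved flatness $(1-c)^{k+1}\varepsilon_n$. Rescaling by $\varepsilon_n$, this translates into an oscillation bound of order $C(1-c)^k$ for both $\tilde{u}_n$ and $\tilde{v}_n$ on $B_{\rho^k}(x_0)$, yielding a uniform $C^{0,\alpha}$ estimate with $\alpha=\log(1-c)/\log\rho$. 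Away from the free boundary, both $u_n$ and $v_n$ are harmonic with uniformly bounded gradients by Corollary \ref{crl:localLipschitzReg}, so standard interior elliptic estimates give uniform control of the linearizations on compact subsets of $\{x_N>0\}$. Combining these bounds and invoking Arzel\`a--Ascoli delivers part (i) along a subsequence, with continuous extensions of $\tilde{u}_\infty$ and $\tilde{v}_\infty$ up to $\{x_N=0\}$.

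For part (ii), at each boundary point $x_0$ the common limiting shift $\sigma_\infty(x_0):=\lim_k\sigma_k$ controls the boundary trace of both $\tilde{u}_n$ and $\tilde{v}_n$; passing to the limit yields $\tilde{u}_\infty(x_0)=\sigma_\infty(x_0)=\tilde{v}_\infty(x_0)$ for every $x_0 \in \{x_N=0\}\cap B_{1/2}$. Part (iii) then follows from the uniform convergence on $\{x_N\ge\delta\}$ together with the observation, guaranteed by \eqref{eqn:ImprovFlatnessHp}, that both free boundaries $\partial\{u_n>0\}$ and $\partial\{v_n>0\}$ lie within an $\varepsilon_n$-neighborhood of $\{x_N=0\}$, so that the graphs $\Theta_{u,n}$, $\Theta_{v,n}$ converge in Hausdorff distance to $\Theta_{u,\infty}$, $\Theta_{v,\infty}$.

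The main obstacle I expect is the rigorous coupling of the two sequences $\tilde u_n$ and $\tilde v_n$ through a common $\sigma_k$ at each iteration step. This requires that at every dyadic scale the hypothesis \eqref{eqn:HarnackIneqCoincidenceHp} of the coincidence partial Harnack remain valid simultaneously for both functions, with the same $\sigma$. Since $\Gamma_u,\Gamma_v$ are invariant under translation and rescaling and $\varepsilon_n$ decreases geometrically, the quantitative coincidence lower bound $|\Gamma-\sqrt{\Lambda}|\ge C\varepsilon$ only becomes easier along the iteration (using Remark \ref{rmk:BddGammauvCoincidence}), but one must nonetheless carefully recenter at each step so as to preserve both flatness conditions at once; the fact that $\tilde{v}_n$ is defined on the larger set $\{u_n>0\}$ rather than $\{v_n>0\}$ also requires handling the thin transition strip where $v_n=0$ but $u_n>0$, on which $\tilde{v}_n$ is bounded by \eqref{eqn:ImprovFlatTildeMUnifBound}.
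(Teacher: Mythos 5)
Your proposal follows essentially the same route as the paper: a dyadic iteration of the coincidence partial Harnack inequality (Lemma \ref{lemma:PartialHarnackIneqCoincidence}) with a single common shift $\sigma_k$ at each scale yields the uniform H\"older estimate of Lemma \ref{lemma:HolderTildeunTildevnCoincidence}, and Arzel\`a--Ascoli then gives (i) and (iii). For (ii) the paper's justification (Remark \ref{rmk:TildeUnUnInftyCoincidenceXn0}) is slightly more direct than your shift-tracking argument — both $\tilde u_n$ and $\tilde v_n$ are linearized on the same set $\{u_n>0\}$ and equal $-x_N/\varepsilon_n$ on its boundary by definition \eqref{eqn:TildeunTildevnDefCoincidence} — but your version captures the same mechanism and is correct.
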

\begin{remark}\label{rmk:TildeUnUnInftyCoincidenceXn0}
 Notice that in the coincidence regime we have the additional condition $\tilde{u}_{\infty} = \tilde{v}_{\infty}$ on $\{ x_N = 0 \} \cap B_{1/2}$, which in general is false in the branching regime. Such condition is a direct consequence of the fact that $u_n$ and $v_n$ are linearized on the largest domain of positivity, namely that of $u_n$, at whose boundary $\tilde{u}_{n} = \tilde{v}_{n} = x_N/\varepsilon_n$ by definitions \eqref{eqn:TildeunTildevnDefCoincidence}.
\end{remark}
We begin with Lemma \ref{lemma:compactnessBranching}, thus dealing with the branching regime. The proof is based on the following Hölder continuity lemma, which in turn relies on Harnack's inequality (see Lemma \ref{lemma:PartialHarnackIneqBranchingFull}). Once Lemma \ref{lemma:HolderTildeunTildevnBranching} is proved, Lemma \ref{lemma:compactnessBranching} basically follows from an 
 Ascoli-Arzelà compactness argument (see e.g. \cite{DeSilva:FreeBdRegularityOnePhase} or \cite[Lemma 7.15]{Velichkov:RegularityOnePhaseFreeBd}), of which we omit the proof. 
\begin{lemma}\label{lemma:HolderTildeunTildevnBranching}
    Under the hypotheses of Lemma \ref{lemma:compactnessBranching} there exist constants $c>0$ and $0 < \beta \le 1$ such that
    \begin{itemize}
        \item[(i)] for all $\overline{x} \in \overline{\Omega_{u_n}} \cap B_{1/2}$
        \begin{equation}\label{eqn:HolderTildeunBranching}
        \left\vert \tilde{u}_n\left( x \right) - \tilde{u}_n\left( \overline{x} \right) \right\vert \le c \left\vert  x - \overline{x} \right\vert^{\beta} \quad \text{if } x \in \overline{\Omega_{u_n}} \cap \left( B_{1/2}\left( \overline{x} \right) \setminus B_{\varepsilon_n/\varepsilon_0}\left( \overline{x} \right) \right) ,
        \end{equation}
        \item[(ii)] for all $\overline{x} \in \overline{\Omega_{v_n}} \cap B_{1/2}$
        \begin{equation}\label{eqn:HolderTildevnBranching}
        \left\vert \tilde{v}_n\left( x \right) - \tilde{v}_n\left( \overline{x} \right) \right\vert \le c \left\vert  x - \overline{x} \right\vert^{\beta} \quad \text{if } x \in \overline{\Omega_{v_n}} \cap \left( B_{1/2}\left( \overline{x} \right) \setminus B_{\varepsilon_n/\varepsilon_0}\left( \overline{x} \right) \right) .
        \end{equation}
    \end{itemize}
\end{lemma}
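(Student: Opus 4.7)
The plan is to iterate the partial Harnack inequality, Lemma \ref{lemma:PartialHarnackIneqBranchingFull}, at a geometric sequence of scales $\rho_0^k$ centred at $\overline x$, following the classical De Silva strategy \cite{DeSilva:FreeBdRegularityOnePhase}. Since the arguments for $\tilde u_n$ and $\tilde v_n$ are fully symmetric, I describe only the one leading to \eqref{eqn:HolderTildeunBranching}.

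Fix $\overline x\in\overline{\Omega_{u_n}}\cap B_{1/2}$. If $\overline x$ lies at distance at least a fixed universal constant from $\partial\Omega_{u_n}$ then $u_n$ is harmonic in a neighbourhood of $\overline x$ and \eqref{eqn:HolderTildeunBranching} (in fact, Lipschitz continuity of $\tilde u_n$) follows immediately from standard interior Schauder estimates applied to the harmonic function $u_n-\sqrt{\Lambda_u}x_N$. I therefore focus on the case in which $\overline x$ is close to $\partial\Omega_{u_n}$, where the flatness hypothesis \eqref{eqn:ImprovFlatnessHp} forces $|\overline x_N|\lesssim\varepsilon_n$. The first step is to rewrite \eqref{eqn:ImprovFlatnessHp}, after translation to $\overline x$, in the exact form \eqref{eqn:HarnackIneqBranchingFullHp} required by Lemma \ref{lemma:PartialHarnackIneqBranchingFull}: this is where the branching assumption \eqref{eqn:ImprovFlatRegimeBranching} enters crucially, since the bound $|\Gamma_{u,n}-\sqrt{\Lambda_u}|\le C\varepsilon_n$ (and the analogous bound for $v$) allows one to replace the coefficients $\Gamma_{u,n},\Gamma_{v,n}$ by $\sqrt{\Lambda_u},\sqrt{\Lambda_v}$ at the price of enlarging the flatness parameter by a universal multiplicative constant, producing initial ordered shifts $\sigma_u^{(0)}\ge\sigma_v^{(0)}$ of size $O(\varepsilon_n)$ and a new flatness parameter $\tilde\varepsilon_0$ comparable to $\varepsilon_n$.

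Inductively, an application of Lemma \ref{lemma:PartialHarnackIneqBranchingFull} to the rescaled solution at scale $\rho_0^k$ produces updated ordered shifts $\sigma_u^{(k+1)}\ge\sigma_v^{(k+1)}$ and an improved flatness parameter $\tilde\varepsilon_{k+1}=(1-c)\tilde\varepsilon_k$. From these flatness bounds one reads off
\[
\operatorname{osc}_{\overline{\Omega_{u_n}}\cap B_{\rho_0^k}(\overline x)}\tilde u_n\;\lesssim\;(1-c)^k,
\]
and interpolating between consecutive dyadic scales yields \eqref{eqn:HolderTildeunBranching} with H\"older exponent $\beta$ determined by $(1-c)=\rho_0^\beta$.

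The iteration can be continued only while the rescaled flatness $\tilde\varepsilon_k/\rho_0^k$ stays below the universal threshold $\varepsilon_0$ of Lemma \ref{lemma:PartialHarnackIneqBranchingFull}; this ceases to hold precisely when $\rho_0^k\sim\varepsilon_n/\varepsilon_0$, which is the origin of the restriction $|x-\overline x|\ge\varepsilon_n/\varepsilon_0$ appearing in the statement. The main technical obstacle is the monitoring of the successive shifts through the iteration: it is essential that Lemma \ref{lemma:PartialHarnackIneqBranchingFull} produces a \emph{single} alternative with ordered shifts, rather than the three-way dichotomy of Lemma \ref{lemma:PartialHarnackIneqBranching}, since this is exactly what permits a clean inductive update of the coupled pair $(\sigma_u^{(k)},\sigma_v^{(k)})$ (the accumulated rescaled shifts remain within the admissible range $1/10$ by summing the geometric series in $(1-c)^k$) and what makes the iteration close.
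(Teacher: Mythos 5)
Your overall strategy (iterating Lemma \ref{lemma:PartialHarnackIneqBranchingFull} on dyadic scales and reading off the H\"older modulus from the geometric decay of the flatness) is the same as the paper's, and your observation that the single ordered alternative of Lemma \ref{lemma:PartialHarnackIneqBranchingFull} is what makes the induction close is correct. However, there is a genuine gap in how you control the shifts along the iteration. Your dichotomy ``$\dist(\overline x,\partial\Omega_{u_n})$ at least a universal constant'' versus ``$|\overline x_N|\lesssim\varepsilon_n$'' misses all points of $\overline{\Omega_{u_n}}\cap B_{1/2}$ at intermediate depth $\varepsilon_n\ll \overline x_N\ll 1$, and for those points your key claim --- that the rescaled shifts stay within the admissible range $1/10$ because the increments sum to a geometric series --- is false. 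The increments $|\sigma^{(k+1)}-\sigma^{(k)}|\le(1-c)^k\varepsilon_n$ do sum to $O(\varepsilon_n)$, but the quantity that must stay below $1/10$ at step $k$ is $|\sigma^{(k)}|/\rho_0^{k}$, whose dominant term is $|\sigma^{(0)}|/\rho_0^{k}$; this is controlled down to the final scale $\rho_0^{k}\sim\varepsilon_n/\varepsilon_0$ only when $|\sigma^{(0)}|=O(\varepsilon_n)$, i.e.\ only when $\overline x$ lies within $O(\varepsilon_n)$ of the relevant free boundary. For an interior point at depth $d$ the hypothesis of Lemma \ref{lemma:PartialHarnackIneqBranchingFull} fails as soon as $\rho_0^k\lesssim d$, and your iteration stops there without reaching the scale $\varepsilon_n/\varepsilon_0$.

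The paper closes exactly this gap with a stopping-time argument: one iterates until either $k=\overline n$ or one of the rescaled shifts leaves $[-1/10,1/10]$, and in the second case one continues the decay of the oscillation by a different mechanism --- the standard interior Harnack inequality for the harmonic function $u_n-\sqrt{\Lambda_u}x_N$ when $\sigma_{u}$ exits through the top (the ball is then contained in $\{u_n>0\}$), or De Silva's one-phase partial Harnack \cite[Lemma 3.3]{DeSilva:FreeBdRegularityOnePhase} when only the other phase's boundary has exited (the remaining free boundary points are then one-phase points). Note also that your interior regime and your boundary regime must be \emph{concatenated}, not treated as exclusive alternatives: for an intermediate-depth point the boundary iteration supplies the oscillation bound $\lesssim(1-c)^k$ down to scale $\sim d$, and only then do interior estimates take over; using interior gradient bounds alone from scale $1$ would give a Lipschitz constant of order $1/d$, which is not uniform. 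You should make this two-stage structure explicit to complete the proof.
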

\begin{proof}
    The basic idea is to iterate Lemma \ref{lemma:PartialHarnackIneqBranchingFull}.

    Fix $n \in \N$. In the following we denote $u_n = u$. Now let $\overline{n}$ be the largest integer such that $\varepsilon_n \le \rho^{\overline{n}} \varepsilon_0$, where $\rho$ and $\varepsilon_0$ are as in Lemma \ref{lemma:PartialHarnackIneqBranchingFull}. Hence, $\overline{n}$ is the largest number of times we can iterate Lemma \ref{lemma:PartialHarnackIneqBranchingFull}.

    First, we deal with \eqref{eqn:HolderTildeunBranching}. To this aim, let $\overline{x} \in \overline{\Omega_{u_n}} \cap B_{1/2}$, $\rho, c$ as in Lemma \ref{lemma:PartialHarnackIneqBranchingFull} and $\sigma \ge -\varepsilon$ so that
    \begin{align*}
        & \sqrt{\Lambda_u} \left( x_N + \sigma \right)^+ \le u_{1/2} \le \sqrt{\Lambda_u} \left( x_N + \sigma + \varepsilon\right)^+ \quad \text{in } B\left( \overline{x} \right) , \\
        & \sqrt{\Lambda_v} \left( x_N + \sigma \right)^+ \le v_{1/2} \le \sqrt{\Lambda_v} \left( x_N + \sigma + \varepsilon\right)^+ \quad \text{in } B\left( \overline{x} \right) ,
    \end{align*}
    which exists thanks to \eqref{eqn:ImprovFlatnessHp} and \eqref{eqn:ImprovFlatRegimeBranching}. Now we can iterate Lemma \ref{lemma:PartialHarnackIneqBranchingFull} $k \in \N \cup \{ 0 \}$ times, until one of the following occurs.
    \begin{itemize}
        \item[1.] $k = \overline{n}$ ,
        
        \item[2.] $\left\vert \sigma_{u_{1/2 \rho^k}} \right\vert > 1/10$ or $\left\vert \sigma_{v_{1/2 \rho^k}} \right\vert > 1/10$ (which are not mutually exclusive).
    \end{itemize}
    Now, case 1 implies that
    \begin{equation}\label{eqn:compactnessBranchingUToProve}
    \sqrt{\Lambda_u} \left( x_N + \sigma_{u, k} \right)^+ \le u \le \sqrt{\Lambda_u} \left( x_N + \sigma_{u, k} + (1-c)^k\varepsilon\right)^+ \quad \text{in } B_{\rho^k/2}\left( \overline{x} \right) , \quad k = 0, ..., \overline{n}
    \end{equation}
    so that
    \[
    \frac{\left\vert \tilde{u}_n\left( x \right) - \tilde{u}_n\left( \overline{x} \right) \right\vert}{\rho^{k+1}} \le \frac{2}{\rho}  \left(\frac{1-c}{\rho}\right)^k \quad \text{for } x \in \overline{\Omega_{u_n}} \cap \left( B_{\rho^k/2}\left( \overline{x} \right) \setminus B_{\rho^{k+1}/2}\left( \overline{x} \right) \right), \quad k = 0, ..., \overline{n}
    \]
    which implies \eqref{eqn:HolderTildeunBranching} with $\beta$ so that $\rho^{\beta} = 1-c$.

    Suppose instead that case 2 occurs. Since $\overline{x} \in \overline{\Omega_{u_n}}$ and by Lemma \ref{lemma:PartialHarnackIneqBranchingFull}
    \[
    \sigma_{u_{1/2 \rho^k}} \ge \sigma_{v_{1/2 \rho^k}} ,
    \]
    necessarily
    \begin{equation}\label{eqn:CompactnessUCase2Characheriz}
    \sigma_{u_{1/2 \rho^k}}  > 1/10 \quad \text{or} \quad \sigma_{v_{1/2 \rho^k}}  < - 1/10
    \end{equation}
    If the first inequality holds, for all steps $k, ..., \overline{n}$ one can apply the standark Harnack inequality to $u_{\rho^{-k}/2}$ to deduce \eqref{eqn:compactnessBranchingUToProve}, so that \eqref{eqn:HolderTildeunBranching} follows again. On the other hand, if the second inequality occurs, for all steps $k, ..., \overline{n}$ one can apply to $u_{\rho^{-k}/2}$ the partial Harnack inequality for the one-phase Bernoulli problem \cite[Lemma 3.3]{DeSilva:FreeBdRegularityOnePhase}, deducing \eqref{eqn:compactnessBranchingUToProve} and consequently \eqref{eqn:HolderTildeunBranching}.

    Now we sketch the proof \eqref{eqn:HolderTildevnBranching}, which is very similar to that of \eqref{eqn:HolderTildeunBranching}. Thus time, let $\overline{x} \in \overline{\Omega_{v_n}} \cap B_{1/2}$, and again $\rho, c$. Then, one can proceed exactly as for \eqref{eqn:HolderTildeunBranching}, introducing cases 1 and 2 as above, the only difference being that conditions \eqref{eqn:CompactnessUCase2Characheriz} are substituted by
    \[
    \sigma_{v_{1/2 \rho^k}}  > 1/10 \quad \text{or} \quad \sigma_{u_{1/2 \rho^k}}  > 1/10.
    \]
    Again, if the first case occurs one can proceed applying for $k, ..., \overline{n}$ the standard Harnack inequality for harmonic functions to $v_{\rho^{-k}/2}$, while if the second case occurs one can proceed applying  to $v_{\rho^{-k}/2}$ Harnack's inequality for the one-phase Bernoulli problem \cite[Lemma 3.3]{DeSilva:FreeBdRegularityOnePhase} for $k, ..., \overline{n}$.
\end{proof}
Now we deal with Lemma \ref{lemma:compactnessCoincidence}, hence with the coincidence regime. The strategy is analogous to that used for Lemma \ref{lemma:compactnessBranching}. Again, the compactness result is based on the following Hölder continuity lemma and an Ascoli-Arzelà compatness argument. We omit the proof of the last part, for which we refer e.g. to \cite{DeSilva:FreeBdRegularityOnePhase} or \cite[Lemma 7.15]{Velichkov:RegularityOnePhaseFreeBd}.
\begin{lemma}\label{lemma:HolderTildeunTildevnCoincidence}
    Under the hypotheses of Lemma \ref{lemma:compactnessCoincidence} there exist constants $c>0$ and $0 < \beta \le 1$ such that for all $\overline{x} \in \overline{\Omega_{u_n}} \cap B_{1/2}$ and $x \in \overline{\Omega_{u_n}} \cap \left( B_{1/2}\left( \overline{x} \right) \setminus B_{\varepsilon_n/\varepsilon_0}\left( \overline{x} \right) \right)$
    \begin{equation}\label{eqn:HolderTildeqnCoincidence}
        \left\vert \tilde{u}_n\left( x \right) - \tilde{u}_n\left( \overline{x} \right) \right\vert \le c \left\vert  x - \overline{x} \right\vert^{\beta} \quad \text{and} \quad \left\vert \tilde{v}_n\left( x \right) - \tilde{v}_n\left( \overline{x} \right) \right\vert \le c \left\vert  x - \overline{x} \right\vert^{\beta}.
    \end{equation}
\end{lemma}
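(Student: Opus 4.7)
The plan is to mimic the proof of Lemma \ref{lemma:HolderTildeunTildevnBranching}, but now iterating the coincidence-regime partial Harnack inequality of Lemma \ref{lemma:PartialHarnackIneqCoincidence} in place of Lemma \ref{lemma:PartialHarnackIneqBranchingFull}. Fix $n$ and drop the index; let $\bar{k}$ be the largest integer with $\varepsilon \le \rho^{\bar{k}} \varepsilon_0$. By the flatness hypothesis \eqref{eqn:ImprovFlatnessHp} and the coincidence hypothesis \eqref{eqn:ImprovFlatRegimeCoincidence}, the rescaled solution $(u_{\bar{x}, 1/2}, v_{\bar{x}, 1/2})$ about any $\bar{x} \in \overline{\Omega_{u_n}} \cap B_{1/2}$ satisfies \eqref{eqn:HarnackIneqCoincidenceHp} with some $|\sigma| \le \varepsilon$.

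Next, I would iterate Lemma \ref{lemma:PartialHarnackIneqCoincidence}. The crucial observation is that under the parabolic rescaling $x \mapsto \rho x$, the oscillation parameter becomes $(1-c)\varepsilon$, whereas $\Gamma_u, \Gamma_v, \sqrt{\Lambda_u}, \sqrt{\Lambda_v}$ are invariant; hence the coincidence condition $|\Gamma_{u,n} - \sqrt{\Lambda_u}| \ge C\varepsilon_n,\ |\Gamma_{v,n} - \sqrt{\Lambda_v}| \ge C\varepsilon_n$ is preserved (indeed, strengthened) at every scale. Moreover, since Lemma \ref{lemma:PartialHarnackIneqCoincidence} produces only the two alternatives \eqref{eqn:HarnackImproveBelowCoincidence} and \eqref{eqn:HarnackImproveAboveCoincidence}, the \emph{same} $\sigma_k$ works simultaneously for $u$ and $v$ at every step. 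Thus as long as $|\sigma_k| \le 1/10$ and $k \le \bar{k}$, one obtains
\begin{equation*}
\Gamma_u \left( x_N + \sigma_k \right)^+ \le u \le \Gamma_u \left( x_N + \sigma_k + (1-c)^k \varepsilon \right)^+,
\end{equation*}
and the analogous inequality for $v$ with $\Gamma_v$ in place of $\Gamma_u$, on $B_{\rho^k/2}(\bar{x})$. Dividing by $\Gamma_u \varepsilon$ (resp.\ $\Gamma_v \varepsilon$) and using the bounds on $\Gamma_u, \Gamma_v$ from Remark \ref{rmk:BddGammauvCoincidence} to ensure uniform constants yield \eqref{eqn:HolderTildeqnCoincidence} on the scales $\rho^{k+1}/2 \le |x - \bar{x}| \le \rho^k/2$ with $\beta$ determined by $\rho^\beta = 1-c$.

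Finally, one has to close the iteration in the case it stops prematurely at some step $k^\ast < \bar{k}$ because $|\sigma_{k^\ast}|$ exceeds $1/10$. In that event the free boundaries have left the working ball $B_{\rho^{k^\ast}/2}(\bar{x})$, so on all smaller scales $u$ and $v$ are either harmonic on a full ball (when $\sigma_{k^\ast} > 1/10$) or identically zero (when $\sigma_{k^\ast} < -1/10$, which cannot occur at a point of $\overline{\Omega_{u_n}}$). In the former case the classical interior Harnack inequality applied to the harmonic differences $u - \Gamma_u x_N$ and $v - \Gamma_v x_N$ iterated on the scales $\rho^k$ with $k \ge k^\ast$ provides the desired Hölder control, exactly as at the corresponding stage of Lemma \ref{lemma:HolderTildeunTildevnBranching}.

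The main technical point --- and what makes the coincidence case simpler than the branching case --- is that the absence of the detachment alternative \eqref{eqn:HarnackIncreaseDistanceBranching} in Lemma \ref{lemma:PartialHarnackIneqCoincidence} automatically couples the improvements for $u$ and $v$ through a common shift $\sigma_k$. The only delicate bookkeeping is to verify that the coincidence condition \eqref{eqn:ImprovFlatRegimeCoincidence} is not destroyed during rescaling, which follows from the scaling invariance noted above.
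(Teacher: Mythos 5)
Your proposal is correct and follows essentially the same route as the paper: iterate Lemma \ref{lemma:PartialHarnackIneqCoincidence}, exploit that the common shift $\sigma_k$ serves both $u$ and $v$ (the absence of the detachment alternative), stop either at $k=\overline{n}$ or when $|\sigma_k|>1/10$, rule out $\sigma_k<-1/10$ because $\overline{x}\in\overline{\Omega_{u_n}}$, and finish the remaining scales with the classical Harnack inequality for the harmonic differences. Your explicit check that the coincidence condition \eqref{eqn:ImprovFlatRegimeCoincidence} survives the rescaling (since $\varepsilon$ only decreases) is a detail the paper leaves implicit, but it does not change the argument.
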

\begin{proof}
    We basically iterate Lemma \ref{lemma:PartialHarnackIneqCoincidence}. Let $n \in \N$ be fixed and choose $\overline{n}$ as the largest integer such that $\varepsilon_n \le \rho^{\overline{n}} \varepsilon_0$, with $\rho$ and $\varepsilon_0$ as in Lemma \ref{lemma:PartialHarnackIneqCoincidence}. In the following we denote $u_u = u$ and $v_n = v$.
    
    Let $\overline{x} \in \overline{\Omega_{q_n}} \cap B_{1/2}$ and $\sigma \ge - \varepsilon$ so that
    \[
    \left( x_N + \sigma \right)^+ \le u_{1/2} \le \left( x_N + \sigma + \varepsilon\right)^+ \quad \text{and} \quad \left( x_N + \sigma \right)^+ \le v_{1/2} \le \left( x_N + \sigma + \varepsilon\right)^+ \quad \text{in } B\left( \overline{x} \right) ,
    \]
    which exists thanks to \eqref{eqn:ImprovFlatnessHp}. Before iterating Lemma \ref{lemma:PartialHarnackIneqCoincidence}, we remark that the main key difference with respect to the proof of Lemma \ref{lemma:compactnessBranching} is that, for all iterations,
    \[
    \sigma_{u_{1/2 \rho^k}} = \sigma_{v_{1/2 \rho^k}} \coloneqq \sigma_k.
    \]
    Now we iterate Lemma \ref{lemma:PartialHarnackIneqCoincidence} $k$ times, until one of the following occurs.
    \begin{itemize}
        \item[1.] $k = \overline{n}$ ,
        
        \item[2.] $\vert \sigma_k \vert > 1/10$.
    \end{itemize}
    Case 1 gives, for all $k = 0, ..., \overline{n}$
    \begin{align}\label{eqn:compactnessCoincidenceToProve}
    \begin{split}
        & \left( x_N + \sigma_{u, k} \right)^+ \le u \le \left( x_N + \sigma_{u, k} + (1-c)^k\varepsilon\right)^+ \quad \text{in } B_{\rho^k/2}\left( \overline{x} \right) , \\
        & \left( x_N + \sigma_{v, k} \right)^+ \le v \le \left( x_N + \sigma_{v, k} + (1-c)^k\varepsilon\right)^+ \quad \text{in } B_{\rho^k/2}\left( \overline{x} \right) ,
    \end{split}
    \end{align}
    which in turn imply \eqref{eqn:HolderTildeqnCoincidence} with $\beta$ such that $\rho^{\beta} = 1-c$.

    If case 2 occurs, since $\overline{x} \in \overline{\Omega_{u_n}}$, necessarily  $\sigma_k > 1/10$. Hence, for all steps $k, ..., \overline{n}$ one can apply to $u_{\rho^{-k}/2}$ and $v_{\rho^{-k}/2}$ the standard Harnack inequality for harmonic functions, to deduce \eqref{eqn:compactnessCoincidenceToProve} and consequently \eqref{eqn:HolderTildeqnCoincidence}.
\end{proof}

\subsubsection{Limit problem}\label{subsec:LimitPb}
In this section we derive the linearized problems solved by the limit functions $\tilde{u}_{\infty}$, $\tilde{v}_{\infty}$ and $\tilde{q}_{\infty}$ defined in Lemmas \ref{lemma:compactnessBranching} and \ref{lemma:HolderTildeunTildevnCoincidence} respectively. Also in this case, we distinguish the branching and the coincidence regime.

Before giving the results, some definitions might be useful.
\begin{definition}\label{def:ViscousOneSidedTwoMembranePb}
Let the functions $h, w \in C^{0}\left( \overline{B_1^+} \right)$. We say that they are viscosity solutions of the 
 one-sided two membranes problem with coefficients $\Lambda_h$ and $\Lambda_w$
\begin{equation}\label{eqn:ViscousOneSidedTwoMembranePb}
    \begin{cases}
        \Delta h = \Delta w = 0 & \text{in } B_1^+ , \\
         h \ge w & \text{on } \{ x_N = 0 \} \cap \overline{B_1^+} , \\
         \partial_N h \le 0 \text{ and } \partial_N w \ge 0 & \text{on } \{ x_N = 0 \} \cap B_1 , \\
         \partial_N h = \partial_N w = 0 & \text{on } \{ h > w \} \cap \{ x_N = 0 \} \cap B_1 , \\
         \Lambda_h \partial_N h  + \Lambda_w \partial_N w = 0 & \text{on } \{ h = w \} \cap \{ x_N = 0 \} \cap B_1 ,
    \end{cases}
\end{equation}
if the following properties hold:
\begin{itemize}
    \item[(i)] The functions $h$ and $w$ are harmonic in the viscosity sense in $B_1^+$,

    \item[(ii)] Whenever a second order polynomial $p(x)$ touches $h \, (w)$ from below (above) at $\overline{x} \in \{ x_N = 0 \} \cap B_1$, then $\partial_N \varphi \left( \overline{x} \right) \le 0 \, (\ge 0)$,

    \item[(iii)] Whenever a second order polynomial $p(x)$ touches $h \, (w)$ from above (below) at $\overline{x} \in \{ h > w \} \cap \{ x_N = 0 \} \cap B_1$, then $\partial_N \varphi \left( \overline{x} \right) \ge 0 \, (\le 0)$,

    \item[(iv)] Let $A \le 0$, $B \ge 0$, $p_u(x) \coloneqq A x_N + p(x)$ and $p_v(x) \coloneqq B x_N + p(x)$ touch respectively $u$ and $v$ from above (below) at $\overline{x} \in \{ h = w \} \cap \{ x_N = 0 \} \cap B_1$, for some second order polynomial $p(x)$ with $\partial_N p\left( \overline{x} \right) = 0$. Then $\Lambda_h A+ \Lambda_w B \le 0 \, (\Lambda_h A+\Lambda_w B \ge 0)$.
\end{itemize}
\end{definition}
\begin{definition}\label{def:ViscousOneSidedTransmissionPb}
Let the functions $h, w \in C^{0}\left( \overline{B_1^+} \right)$. We say that they are viscosity solutions of the following one-sided transmission problem (with coefficients $\Lambda_h$ and $\Lambda_w$)
\begin{equation}\label{eqn:ViscousOneSidedTransmissionPb}
    \begin{cases}
        \Delta h = \Delta w = 0 & \text{in } B_1^+ , \\
         h = w & \text{on } \{ x_N = 0 \} \cap B_1 , \\
         \partial_N h \le 0 \text{ and } \partial_N w \ge 0 & \text{on } \{ x_N = 0 \} \cap B_1 , \\
         \Lambda_h \partial_N h  + \Lambda_w \partial_N w = 0 & \text{on } \{ x_N = 0 \} \cap B_1 ,
    \end{cases}
\end{equation}
if the following properties hold:
\begin{itemize}
    \item[(i)] The functions $h$ and $w$ are harmonic in the viscosity sense in $B_1^+$,

    \item[(ii)] If a second order polynomial $p(x)$ touches $h \, (w)$ from below (above) at $\overline{x} \in \{ x_N = 0 \} \cap B_1$, then $\partial_N \varphi \left( \overline{x} \right) \le 0 \, (\ge 0)$,

    \item[(iii)] Let $A \le 0$, $B \ge 0$, $p_u(x) \coloneqq A x_N + p(x)$ and $p_v(x) \coloneqq B x_N + p(x)$ touch respectively $u$ and $v$ from above (below) at $\overline{x} \in \{ x_N = 0 \} \cap B_1$, for some second order polynomial $p(x)$ with $\partial_N p\left( \overline{x} \right) = 0$. Then $\Lambda_h A+ \Lambda_w B \le 0 \, (\Lambda_h A+\Lambda_w B \ge 0)$.
\end{itemize}
\end{definition}
\begin{remark}\label{rmk:ViscousLimitPbsRegularity}
We recall that, if the functions $h, w \in C^{0}\left( \overline{B_1^+} \right)$ are viscosity solutions to problem \eqref{eqn:ViscousOneSidedTwoMembranePb} in the sense of definition \ref{def:ViscousOneSidedTwoMembranePb}, then $h, w \in C^{1, 1/2}\left( B_1^+ \right)$ up to $\{ x_N = 0 \}$. Moreover, there exists a constant $C>0$ independent of $h$ and $w$ such that
\[
\| h \|_{C^{1/2}\left( \overline{B}_{1/2}^+ \right)} \le C \| h \|_{L^{\infty}\left( \overline{B}_{1}^+ \right)} , \quad \text{and} \quad \| w \|_{C^{1/2}\left( \overline{B}_{1/2}^+ \right)} \le C \| w \|_{L^{\infty}\left( \overline{B}_{1}^+ \right)} .
\]
This can be seen splitting $h$ and $w$ respectively into a sum and a difference of a harmonic function in $B_1$ and a solution of the thin obstacle problem (see e.g. \cite[Appendix B]{DePhilippisSpolaorVelichkov2021:TwoPhaseBernoulli}).

If, on the other hand, the functions $h, w \in C^{0}\left( \overline{B_1^+} \right)$ are viscosity solutions to problem \eqref{eqn:ViscousOneSidedTransmissionPb} in the sense of definition \ref{def:ViscousOneSidedTransmissionPb}, then $h, w \in C^{\infty}\left( B_1^+ \right)$ up to $\{ x_N = 0 \}$. Similarly as before, there exists a constant $C>0$ independent of $h$ and $w$ such that
\[
\| h \|_{C^{2}\left( \overline{B}_{1/2}^+ \right)} \le C \| h \|_{L^{\infty}\left( \overline{B}_{1}^+ \right)} , \quad \text{and} \quad \| w \|_{C^{2}\left( \overline{B}_{1/2}^+ \right)} \le C \| w \|_{L^{\infty}\left( \overline{B}_{1}^+ \right)} .
\]
This result can be found in \cite[Section 3]{DeSilvaFerrariSalsa:2PhaseFreeBdDivergenceForm}
\end{remark}
We begin studying the branching regime, for which we have the following
\begin{lemma}\label{lemma:linearizationBranching}
    Let the functions $\tilde{u}_{\infty}$ and $\tilde{v}_{\infty}$ be as in Lemma \ref{lemma:compactnessBranching}. Then, in the sense of Definition \ref{def:ViscousOneSidedTwoMembranePb}, they are viscosity solutions of problem \eqref{eqn:ViscousOneSidedTwoMembranePb} (with $h = \tilde{u}_{\infty}$, $w = \tilde{v}_{\infty}$, $\Lambda_h = \Lambda_u$ and $\Lambda_w = \Lambda_v$).
\end{lemma}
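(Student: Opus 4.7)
The plan is to verify the four properties (i)--(iv) of Definition \ref{def:ViscousOneSidedTwoMembranePb} in turn, each obtained by linearizing the corresponding viscosity property of $(u_n, v_n)$. Harmonicity of $\tilde u_\infty, \tilde v_\infty$ in $B_1^+$, namely (i), is immediate from Lemma \ref{lemma:compactnessBranching}: the flatness bound \eqref{eqn:ImprovFlatnessHp} forces any compact $K \Subset B_1^+$ to lie inside $\{u_n>0\}$ for $n$ large, where $\tilde u_n$ is, by construction, a linear rescaling of the harmonic function $u_n$; locally uniform convergence then gives $\Delta \tilde u_\infty = 0$, and likewise for $\tilde v_\infty$.

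For each boundary condition (ii)--(iv) I would follow the standard De Silva-style linearization template. Suppose a second-order polynomial $\pi$ touches as prescribed at some $\bar x \in \{x_N = 0\} \cap B_1$; a quadratic perturbation $\pi \pm c|x-\bar x|^2$ (with the sign and magnitude dictated by the direction of touching and by the need to fix the sign of $\Delta \pi$) enforces strict touching, and the Hausdorff convergence of the graphs $\Theta_{u,n}, \Theta_{v,n}$ from Lemma \ref{lemma:compactnessBranching}(ii) produces a sequence of touching points $\bar x_n \to \bar x$ at finite $n$. At each $\bar x_n$ one builds the natural linearized candidate test function for $(u_n, v_n)$: $\sqrt{\Lambda_u}(x_N + \varepsilon_n \pi)^+$ for statements involving $u_n$ alone, $\sqrt{\Lambda_v}(x_N + \varepsilon_n \pi)^+$ for $v_n$ alone, and $x_N + \varepsilon_n \pi$ for those involving the modulus $m_n = \sqrt{u_n^2 + v_n^2}$ or the weighted average $q_{n,\eta_*} = \sqrt{\Lambda_u}\, u_n + \sqrt{\Lambda_v}\, v_n$. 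These candidates turn out to be strict comparison sub/supersolutions in the sense of Definition \ref{def:viscoudSolOneSidedTwoPhasePb} unless their gradient-squared satisfies the correct one-sided bound; linearizing this bound in $\varepsilon_n$ yields the desired inequality on $\partial_N \pi(\bar x)$.

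The cases (ii) and (iii) follow by isolating a one-phase portion of the free boundary of $u_n$ or $v_n$: at $\bar x \in \{h > w\}$ this is automatic from the strict gap $\tilde u_n(\bar x_n) - \tilde v_n(\bar x_n) \to h(\bar x) - w(\bar x) > 0$, which forces $\bar x_n$ to sit on $\partial\{u_n>0\} \setminus \partial\{v_n>0\}$ (or on the analogue for $v_n$) for $n$ large, so that Definition \ref{def:viscoudSolOneSidedTwoPhasePb}(i).1--(ii).1 (respectively (i).2--(ii).2) applies. At a general $\bar x \in \{x_N = 0\} \cap B_1$, only the supersolution direction of the one-phase Bernoulli condition for $u_n$ and the subsolution direction for $v_n$ can be deployed without using $h > w$ locally, which accounts for the asymmetry in (ii) -- only ``from below'' for $h$ and ``from above'' for $w$ yield unconditional constraints.

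The main obstacle is the transmission condition (iv), in which the touching points $\bar x_n$ must lie on the two-phase set $\partial\{u_n>0\} \cap \partial\{v_n>0\}$ so that Definition \ref{def:viscoudSolOneSidedTwoPhasePb}(i).3 and (ii).3 apply to $q_{n,\eta_*}$ and $m_n$. Using $\Lambda_u + \Lambda_v = 1$ and the branching bounds \eqref{eqn:ImprovFlatRegimeBranching}, one derives the expansions
\[
m_n = x_N + \varepsilon_n \big(\Lambda_u \tilde u_n + \Lambda_v \tilde v_n\big) + O\!\big(\varepsilon_n^2/x_N\big), \qquad q_{n,\eta_*} = x_N + \varepsilon_n \big(\Lambda_u \tilde u_n + \Lambda_v \tilde v_n\big),
\]
so that the natural barrier at a coincidence point $\bar x \in \{h=w\} \cap \{x_N=0\}$ is
\[
\Phi_n(x) := x_N + \varepsilon_n\big(\Lambda_u p_u(x) + \Lambda_v p_v(x)\big) = x_N + \varepsilon_n\big((\Lambda_u A + \Lambda_v B) x_N + p(x)\big),
\]
with $|\nabla \Phi_n(\bar x)|^2 = 1 + 2 \varepsilon_n (\Lambda_u A + \Lambda_v B) + O(\varepsilon_n^2)$. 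Pairing the ``from above'' (respectively ``from below'') touching of $(\tilde u_\infty, \tilde v_\infty)$ with the subsolution property of $m_n$ (respectively the supersolution property of $q_{n,\eta_*}$) and sending $\varepsilon_n \to 0$ produces the required sign of $\Lambda_u A + \Lambda_v B$. The delicate point is to calibrate the quadratic perturbation together with a small vertical shift so that the free boundary of the perturbed $\Phi_n$ actually lands inside the two-phase set of $(u_n, v_n)$ -- otherwise the wrong viscosity condition (one-phase rather than two-phase) is triggered and the proof degenerates; this is achieved by combining the coincidence $\tilde u_n(\bar x_n) - \tilde v_n(\bar x_n) \to 0$ with a Hausdorff-convergence argument applied to $\Theta_{u,n} \cap \Theta_{v,n}$.
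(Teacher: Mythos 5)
Your overall route is the same as the paper's: for (iv) you linearize the modulus $m_n=\sqrt{u_n^2+v_n^2}$ and the weighted average $q_n=\sqrt{\Lambda_u}\,u_n+\sqrt{\Lambda_v}\,v_n$, build the barrier $x_N+\varepsilon_n\bigl((\Lambda_u A+\Lambda_v B)x_N+p\bigr)$, and invoke Definition \ref{def:viscoudSolOneSidedTwoPhasePb} (i).3 and (ii).3; the expansions you write for $m_n$ and $q_n$ are exactly the ones the paper uses (including the algebraic identity producing the $O(\varepsilon_n^2/x_N)$ term, which in the paper is absorbed by adding $C\varepsilon_n^2$ to the barrier since the two free boundaries are $O(\varepsilon_n^2)$ apart). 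Parts (i)--(iii) are treated as standard in the paper, and your sketch of them is consistent with that.

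There is, however, one genuine gap, precisely at the point you flag as ``delicate.'' You propose to force the touching point $\bar x_n$ to land on the two-phase set $\partial\{u_n>0\}\cap\partial\{v_n>0\}$ by calibrating the perturbation and appealing to Hausdorff convergence of $\Theta_{u,n}\cap\Theta_{v,n}$. This cannot work: even though $\tilde u_\infty(\bar x)=\tilde v_\infty(\bar x)$, at finite $n$ the two free boundaries near $\bar x_n$ may be disjoint (they can separate along a cusp), so the two-phase set there may be empty; and Hausdorff convergence of the individual graphs does not pass to their intersection. Moreover, your remark that triggering the one-phase condition makes ``the proof degenerate'' is a misconception, and fixing it is exactly what closes the argument. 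The correct dichotomy is: slide the barrier for the \emph{single} component ($\phi_n=\sqrt{\Lambda_v}\,(x_N+\varepsilon_n\tilde p_v)$ with $\tilde p_v=(\Lambda_u A+\Lambda_v B)x_N+p$ in the ``from below'' case) until it touches $v_n$ at some free boundary point $x_n$. If $x_n$ is a one-phase point of $v_n$, then under the contradiction hypothesis $\Lambda_u A+\Lambda_v B>0$ one has $|\nabla\phi_n(x_n)|^2=\Lambda_v\bigl(1+2\varepsilon_n(\Lambda_u A+\Lambda_v B+\partial_N p(x_n))\bigr)+O(\varepsilon_n^2)>\Lambda_v$, so $\phi_n$ is a strict comparison subsolution for $v$ and the one-phase viscosity condition is \emph{already violated} --- no two-phase information is needed. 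If instead $x_n$ is a two-phase point, one observes that $\varphi_n=\phi_n/\sqrt{\Lambda_v}$ and $\phi_n$ are proportional, hence share the same free boundary, so $\varphi_n$ touches $q_n$ from below at the same $x_n$, and Definition \ref{def:viscoudSolOneSidedTwoPhasePb} (i).3 gives the contradiction. Both branches must be run (and both succeed); your proposal only runs the second and tries, unsuccessfully, to exclude the first a priori. The same remark applies to the ``from above'' case with $u_n$ and $m_n$ in place of $v_n$ and $q_n$.
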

\begin{proof}
    The proof of Definition \eqref{def:ViscousOneSidedTwoMembranePb} (i), (ii), (iii) is standard and we refer to \cite{DeSilva:FreeBdRegularityOnePhase, Velichkov:RegularityOnePhaseFreeBd, DePhilippisSpolaorVelichkov2021:TwoPhaseBernoulli, DeSilvaFerrariSalsa:2PhaseFreeBdDivergenceForm}. Now we prove Definition \eqref{def:ViscousOneSidedTwoMembranePb} (iv). 
    
    Let $A, B, p_u(x), p_v(x), p(x)$ and $\overline{x}$ as in Definition \eqref{def:ViscousOneSidedTwoMembranePb} (iv), with $p_u$ and $p_v$ touching respectively $\tilde{u}_{\infty}$ and $\tilde{v}_{\infty}$ from below.
    
    Without loss of generality we can assume that they touch strictly from below and that $\Delta p > 0$, and consequently $\Delta p_u > 0$ and $\Delta p_v > 0$. We need to show that $\Lambda_h A+ \Lambda_w B \le 0$, and we do this by contradiction.

    Suppose that $\Lambda_u A + \Lambda_v B > 0$. Let us consider a new competitor $\tilde{p}_v(x)$ touching $\tilde{v}_{\infty}$ strictly from below at the same point $\overline{x}$, defined by
    \[
    \tilde{p}_v(x) \coloneqq \left( \Lambda_u A + \Lambda_v B \right) x_N + p(x).
    \]
    and let us denote
    \[
    \phi_n(x) \coloneqq \sqrt{\Lambda_v} x_N + \varepsilon_n \sqrt{\Lambda_v} \tilde{p}_v(x)
    \]
    By Lemma \ref{lemma:compactnessBranching} there exist a small constant $\delta>0$ and constants $\eta_n \to 0$ as $n \to +\infty$, such that for $n \in \N$ sufficiently large
    \begin{itemize}
        \item[(i)] $\phi_n(x - t e_N) < u_n$ on $\partial B_{\delta}\left( \overline{x} \right)$ for all $t \in [-\eta_n, \eta_n ]$,

        \item[(ii)] there exists $t_n \in (-\eta_n, \eta_n )$ such that $\phi_n(x - t e_N)$ touches $u_n$ from below at $x_n \in B_{10 \sqrt{\Lambda_v} \eta_n}\left( \overline{x} \right)$ and $\phi_n(x - t e_N) > u_n$ in $B_{\delta}\left( \overline{x} \right)$ for all $t \in (t_n, \eta_n]$,

        \item[(iii)] $x_n \to \overline{x}$ as $n \to +\infty$.
    \end{itemize}
    Now, $x_n$ can be either one-phase points for $v_n$, or two-phase points. Let us reach a contradiction proving that, for $n$ sufficiently large, none of the cases above is possible.
    
    First, we prove that $x_n$ cannot be one-phase points for $v_n$. Indeed, by Definition \ref{def:viscoudSolOneSidedTwoPhasePb} (i) we would have
    \begin{align*}
    \begin{split}
        & \Lambda_u \ge \left\vert \nabla \phi_n(x - t_n e_N)(x_n) \right\vert^2 = \Lambda_u + \varepsilon_n \Lambda_u \partial_N \tilde{p}_v(x_n) + O(\varepsilon_n^2) = \\
        & = \Lambda_u + \varepsilon_n \Lambda_u (\Lambda_u A + \Lambda_v B + \partial_N p(x_n)) + O(\varepsilon_n^2)
    \end{split}
    \end{align*}
    which gives a contradiction for large $n$, since $\partial_N p(x_n) \to 0$ as $n \to +\infty$ while $\Lambda_u A + \Lambda_v B > 0$ by hypothesis. 

    Hence, $x_n$ need to be two phase points, but now we show that this is not possible. To this aim, notice that the competitor $\tilde{p}_v(x)$ touches from below at $\overline{x}$ also the (linearized) average $\tilde{q}_{\infty} \coloneqq \sqrt{\Lambda_u} \tilde{u}_{\infty} + \sqrt{\Lambda_v} \tilde{v}_{\infty}$.

    Now consider the competitors for the average $q_n(x) = \sqrt{\Lambda_u} u_n + \sqrt{\Lambda_v} v_n$ defined by
    \[
    \varphi_n(x) \coloneqq x_N + \varepsilon_n \tilde{p}_v(x).
    \]
    Similarly as for the partial Harnack inequality (Lemma \ref{lemma:PartialHarnackIneqBranching}), the key point is that the competitors $\phi_n$ and $\varphi_n$ for $v_n$ and $q_n$ respectively, share the same free boundary.

    By Lemma \ref{lemma:compactnessBranching} and the previous discussion we have that 
    \begin{itemize}
        \item[(i)] $\varphi_n(x - t e_N) < q_n$ on $\partial B_{\delta}\left( \overline{x} \right) \cap \{ v_n > 0 \}$ for all $t \in [-\eta_n, \eta_n ]$,

        \item[(ii)] $\varphi_n(x - \eta_n e_N) < q_n$ in $\overline{B_{\delta}\left( \overline{x} \right)}$.
    \end{itemize}
    Notice that in point $(ii)$ the restriction $x \in \{u > 0 \}$ has been dropped, since for $t \in [t_n, \eta_n]$ the free boundary of $\varphi_n(x - \eta_n e_N)$ belongs to $\overline{\{ v_n > 0 \}}$ and consequently $\{ \varphi_n(x - \eta_n e_N) > 0 \} \cap B_{\delta}\left( \overline{x} \right) \subseteq \{ v_n > 0 \}$.
    
    For the same reason we have that $\varphi_n(x-t_n e_N)$ touches $q_n$ from below at $x_n$ and $\varphi_n(x-t_n e_N) < q_n$ in $B_{\delta}\left( \overline{x} \right)$ for all $t \in (t_n, \eta_n]$ (notice that $\varphi_n(x-t_n e_N)$ cannot touch $q_n$ at a point in $\{ q_n > 0 \} \cap \{ v_n>0 \}$, since $q_n$ is harmonic on such set). However, this leads to a contradiction. Indeed, by \ref{def:viscoudSolOneSidedTwoPhasePb} (i) we would have
    \begin{align*}
    \begin{split}
        & 1 \ge \left\vert \nabla \phi_n(x - t_n e_N)(x_n) \right\vert^2 = 1 + \varepsilon_n \partial_N \tilde{p}_v(x_n) + O(\varepsilon_n^2) = \\
        & = 1+ \varepsilon_n (\Lambda_u A + \Lambda_v B + \partial_N p(x_n)) + O(\varepsilon_n^2)
    \end{split}
    \end{align*}
    which again gives a contradiction for large $n$, since $\partial_N p(x_n) \to 0$ and $\Lambda_u A + \Lambda_v B > 0$.

    Now we deal with the case when $p_u$ and $p_v$ touch $\tilde{u}_{\infty}$ and $\tilde{v}_{\infty}$ from above at $\overline{x}$. We only sketch the proof, since the strategy is analogous to the case just treated when they touch from below.

     Without loss of generality we can assume that the polynomials touch strictly from above and that $\Delta p$, $\Delta p_u$, $\Delta p_v > 0$.
     
     Again, we proceed by contradiction. Assume that $\Lambda_u A + \Lambda_v B < 0$ and introduce the competitors for $\tilde{u}_{\infty}$ defined by
    \[
    \tilde{p}_{u, n}(x) \coloneqq \left( \Lambda_u A + \Lambda_v B \right) x_N + p(x)
    \]
    which touch $\tilde{u}_{\infty}$ from above at the same point $\overline{x}$. Notice also that $\Delta \tilde{p}_{u, n} > 0$ for $n$ sufficiently large. Introduce also the following competitors for $u_n$
    \[
    \phi_n(x) \coloneqq \Lambda_u \left( x_N + \tilde{p}_{u, n}(x) + C \varepsilon_n^2 \right)_+.
    \]
    for some suitable constant $C>0$ (independent of $n$) to be precised in the following.
    
    Similarly as before, by Lemma \ref{lemma:compactnessBranching} we have that $\phi_n(x-t_n e_N)$ touches $u_n$ from above at some $x_n \in B_{\delta}\left( \overline{x} \right)$ and also $\{ u_n > 0 \} \Subset \{ \phi_n(x-t e_N) > 0 \}$.

    Again, $x_n$ are either one-phase point of $u_n$ or two phase points. However, since $u_n$ is a viscosity solution of problem \eqref{eqn:MainViscousPb}, similarly as in the previous case the points $x_n$ are necessarily two-phase points since $\Lambda_u A + \Lambda_v B < 0$ by hypothesis.

    Now, since
    \[
    \sqrt{a x^2 + b y^2} - (ax+by) = \frac{ab(x-y)^2}{\sqrt{a x^2 + b y^2} + ax+by} \quad \text{where } a, b, x, y >0 \text{ and } a+b=1 ,
    \]
    using the above identity with $a = \Lambda_u$, $b = \Lambda_v$, $x = \left( x_N + \varepsilon_N \left( A x_N + p \right) \right)_+$ and $y = \left( x_N + \varepsilon_N \left( B x_N + p \right) \right)_+$
    together with the strict monotonicity of the denominator and the fact that the free boundaries of $x$ and $y$ in direction $e_N$ are distant less then $c \varepsilon_n^2$ (for some $c>0$ independent of $n$), we have that in $B_{\delta}\left( \overline{x} \right)$
    \begin{align*}
        \begin{split}
            & \sqrt{\Lambda_u \left( x_N + \varepsilon_N \left( A x_N + p \right) \right)_+^2 + \Lambda_v \left( x_N + \varepsilon_N \left( B x_N + p \right) \right)_+^2} \le \\
            & \le \left( x_N + \varepsilon_n \left( \left( \Lambda_u A + \Lambda_v B \right) x_N + p(x) \right) + C \varepsilon_n^2 \right)_+
        \end{split}
    \end{align*}
    for some constant $C>0$ independent of $n$, which is the  choice that we make.
    
    Hence, we see that
    \[
    \varphi_n(x) \coloneqq \left( x_N + \tilde{p}_{u, n}(x) + C \varepsilon_n^2 \right)_+
    \]
    is a competitor for the modulus $m_n \coloneqq \sqrt{u_n^2+v_n^2}$ such that
    \begin{itemize}
        \item[(i)] $\varphi_n(x - t e_N) > m_n$ on $\partial B_{\delta}\left( \overline{x} \right) \cap \{ u_n > 0 \}$ for all $t \in [-\eta_n, \eta_n ]$,

        \item[(ii)] $\varphi_n(x - \eta_n e_N) > m_n$ in $\overline{B_{\delta}\left( \overline{x} \right)} \cap \{ u_n > 0 \}$.
    \end{itemize}
    Property (ii) is trivial, while one can check property (i) first on $B_{\delta}\left( \overline{x} \right) \cap \left( \{ u_n > 0 \} \setminus \{ v_n>0 \} \right)$, where $m_n = u_n$ and it follows from the analogous property for $\phi_n(x)$, and then on $B_{\delta}\left( \overline{x} \right) \cap  \{ v_n>0 \} $ where it follows from Lemma \ref{lemma:compactnessBranching}.

    Again, the fundamental point is that $\phi_n$ and $\varphi_n$ share the same free boundary, hence $\phi_n$ cannot touch $u_n$ at a two-phase point $x_n$, otherwise $\varphi_n$ would touch $m_n$ at the same point which, using Definition \ref{def:viscoudSolOneSidedTwoPhasePb} (ii) would lead to a contradiction since $\Lambda_u A + \Lambda_v B < 0$ by hypothesis.
\end{proof}
Now we turn to the coincidence regime. Before stating the result we need to introduce some notation. We denote $\Gamma_{u, \infty}$ and $\Gamma_{v, \infty}$ two real numbers such that $\Gamma_{u, n} \to \Gamma_{u, \infty}$ and $\Gamma_{v, n} \to \Gamma_{v, \infty}$ as $n \to +\infty$, which exist up to a subsequence by Remark \ref{rmk:BddGammauvCoincidence}.

We are ready to state the following
\begin{lemma}\label{lemma:linearizationCoincidence}
    Let the functions $\tilde{u}_{\infty}$ and $\tilde{v}_{\infty}$ be as in Lemma \ref{lemma:compactnessCoincidence}. Then, they are viscosity solutions of problem \eqref{eqn:ViscousOneSidedTransmissionPb} (with $h = \tilde{u}_{\infty}$, $w = \tilde{v}_{\infty}$, $\Lambda_h = \Gamma_{u, \infty}^2$ and $\Lambda_w = \Gamma_{v, \infty}^2$), in the sense of Definition \ref{def:ViscousOneSidedTransmissionPb}.
\end{lemma}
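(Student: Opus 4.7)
The plan is to follow closely the strategy of Lemma \ref{lemma:linearizationBranching}, exploiting two simplifications of the coincidence regime: by Remark \ref{rmk:TildeUnUnInftyCoincidenceXn0} both $\tilde{u}_\infty$ and $\tilde{v}_\infty$ are defined on all of $\overline{B_1^+}$ and coincide on $\{x_N=0\}\cap B_{1/2}$, so the free boundaries have already merged at the linearized level; and by Remark \ref{rmk:BddGammauvCoincidence} one has $\Gamma_{u,n}^2 < \Lambda_u$ and $\Gamma_{v,n}^2 > \Lambda_v$ with a gap of order $\varepsilon$, so linear competitors $\phi_n(x) = \Gamma_{u,n}(x_N + \varepsilon_n P(x))_+$ are automatic strict comparison supersolutions for $u_n$, while $\psi_n(x) = \Gamma_{v,n}(x_N + \varepsilon_n P(x))_+$ are automatic strict comparison subsolutions for $v_n$.

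The interior harmonicity (Definition \ref{def:ViscousOneSidedTransmissionPb}(i)) follows by lifting any test polynomial to $\phi_n$ or $\psi_n$ and applying the strong maximum principle inside $\{u_n>0\}$, as in \cite{DeSilva:FreeBdRegularityOnePhase, Velichkov:RegularityOnePhaseFreeBd}. The one-sided derivative signs (Definition \ref{def:ViscousOneSidedTransmissionPb}(ii)) are handled by contradiction: if $p$ touches $\tilde{u}_\infty$ from below at $\overline{x}\in\{x_N=0\}$ with $\partial_N p(\overline{x})$ of the wrong sign, then by Lemma \ref{lemma:compactnessCoincidence} a suitable translate of $\phi_n$ touches $u_n$ from below at some $x_n\to\overline{x}$; interior touching is excluded by the maximum principle, touching at a one-phase $u$ point by Definition \ref{def:viscoudSolOneSidedTwoPhasePb}(i)(1), and touching at a two-phase point is ruled out by the modulus/average construction of the next step. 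The treatment of $\tilde{v}_\infty$ is symmetric, using that $\psi_n$ is a strict subsolution for $v$.

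The main step is the transmission condition. Suppose $p_u(x)=Ax_N+p(x)$ and $p_v(x)=Bx_N+p(x)$ touch $\tilde{u}_\infty$ and $\tilde{v}_\infty$ from above at $\overline{x}\in\{x_N=0\}$ with $\partial_N p(\overline{x})=0$. The weighted test polynomial $\tilde{p}_m(x) := (\Gamma_{u,\infty}^2 A + \Gamma_{v,\infty}^2 B)\,x_N + p(x)$ touches from above the weighted average $\tilde{q}_\infty := \Gamma_{u,\infty}^2\tilde{u}_\infty+\Gamma_{v,\infty}^2\tilde{v}_\infty$. A direct algebraic computation using $\Gamma_{u,n}^2+\Gamma_{v,n}^2=1$ yields the key identity
\[
m_n^2 = q_n^2 + \Gamma_{u,n}^2\Gamma_{v,n}^2\,\varepsilon_n^2(\tilde{u}_n-\tilde{v}_n)^2, \qquad q_n := \Gamma_{u,n}\,u_n + \Gamma_{v,n}\,v_n,
\]
so by Lemma \ref{lemma:compactnessCoincidence} the linearization of $m_n = \sqrt{u_n^2+v_n^2}$ coincides with $\tilde{q}_\infty$ up to $o(1)$. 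Assuming by contradiction that $\Gamma_{u,\infty}^2 A + \Gamma_{v,\infty}^2 B$ has the wrong sign, I would lift $\tilde{p}_m$ to $M_n(x) := (x_N + \varepsilon_n \tilde{p}_m(x) + c_n)_+$ with a suitably small correction $c_n$, and compute
\[
|\nabla M_n|^2 = 1 + 2\varepsilon_n\bigl(\Gamma_{u,\infty}^2 A + \Gamma_{v,\infty}^2 B\bigr) + o(\varepsilon_n)
\]
in a neighborhood of $\overline{x}$. With the appropriate sign, $M_n$ becomes a strict comparison supersolution for the modulus in the sense of Definition \ref{def:viscoudSolOneSidedTwoPhasePb}(ii)(3); in the symmetric case of touching from below, one uses instead the viscosity supersolution $q_\eta$ from \eqref{eqn:VectorialCompetitorAverageProperties} with $\eta=(\Gamma_{u,n},\Gamma_{v,n})$ and a strict comparison subsolution from Definition \ref{def:viscoudSolOneSidedTwoPhasePb}(i)(3). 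By compactness and the strict touching of $\tilde{p}_m$ on $\tilde{q}_\infty$, the lifted competitor must touch $m_n$ (resp. $q_\eta$) at some $x_n\to\overline{x}$ on the two-phase boundary $\partial\{u_n>0\}\cap\partial\{v_n>0\}$, yielding the required contradiction.

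The hardest part will be the bookkeeping needed to ensure that the touching point $x_n$ of the Step 3 lifted competitor actually lies at a point where Definition \ref{def:viscoudSolOneSidedTwoPhasePb}(ii)(3) or (i)(3) is directly applicable: exactly as in the branching case of Lemma \ref{lemma:linearizationBranching}, this requires coupling the modulus competitor $M_n$ with its individual counterparts $\phi_n$, $\psi_n$ so that they share the same free boundary, and then excluding one-phase-type touching points by the individual strict-comparison arguments of Step 2.
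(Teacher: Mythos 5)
Your proposal is correct and follows essentially the same route as the paper: linearize, lift the weighted test polynomial to competitors for $u_n$ (resp.\ $v_n$) and for the modulus $m_n$ (resp.\ the average $q_n=\Gamma_{u,n}u_n+\Gamma_{v,n}v_n$), exploit that the paired competitors share the same free boundary, rule out one-phase touching via the coincidence-regime bounds $\Gamma_{u,n}\le\sqrt{\Lambda_u}-C\varepsilon_n$ and $\Gamma_{v,n}\ge\sqrt{\Lambda_v}+C\varepsilon_n$, and rule out two-phase touching via Definition \ref{def:viscoudSolOneSidedTwoPhasePb}(i)(3)/(ii)(3). Your explicit identity $m_n^2=q_n^2+\Gamma_{u,n}^2\Gamma_{v,n}^2\varepsilon_n^2(\tilde u_n-\tilde v_n)^2$ is just a clean restatement of the algebraic identity the paper invokes in Lemma \ref{lemma:linearizationBranching}, and the $O(\varepsilon_n^2)$ correction $c_n$ plays the same role as the paper's $C\varepsilon_n^2$.
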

\begin{proof}
    First of all, we notice that $\tilde{u}_{\infty} = \tilde{v}_{\infty}$ on $\{ x_N = 0 \} \cap B_1$ by Lemma \ref{lemma:compactnessCoincidence} (ii).
    
    Now we only sketch the proof which follows the same strategy of that of Lemma \ref{lemma:compactnessBranching}. Again we only deal with Definition \ref{def:ViscousOneSidedTransmissionPb} (iii) and we refer to \cite{DeSilva:FreeBdRegularityOnePhase, Velichkov:RegularityOnePhaseFreeBd, DePhilippisSpolaorVelichkov2021:TwoPhaseBernoulli, DeSilvaFerrariSalsa:2PhaseFreeBdDivergenceForm} for points (i) and (ii).

    Let $\overline{x}, A, B, p_u(x), p_v(x), p(x)$ (with $\Delta p > 0$) as in Definition \eqref{def:ViscousOneSidedTransmissionPb} (iii) and suppose that $p_u$ and $p_v$ touch respectively $\tilde{u}_{\infty}$ and $\tilde{v}_{\infty}$ strictly from below. Assume by contradiction that $\Gamma_{u, \infty} A + \Gamma_{v, \infty} B > 0$ and introduce the following  competitor for $v_n$
    \begin{align*}
    & \phi_n(x) \coloneqq \Gamma_{v, n} \left( x_N + \varepsilon_n \left( \Gamma_{u, n}^2 A + \Gamma_{v, n}^2 B \right) x_N + \varepsilon_n \tilde{p}_v(x) \right) = \\ = & \Gamma_{v, n} \left( x_N + \varepsilon_n (1+o(1)) \left( \Gamma_{u, \infty}^2 A + \Gamma_{v, \infty}^2 B \right) x_N + \varepsilon_n \tilde{p}_v(x)\right),
    \end{align*}
    where $o(1) \to 0$ as $n \to +\infty$. Similarly as in the proof of Lemma \ref{lemma:linearizationBranching}, $\phi_n(x+te_N)$ is a family of barriers from below for $u_n$ in $B_{\delta}\left( \overline{x} \right)$, for $t$ in a suitable interval. Let $\overline{t}$ be the largest $t$ so that $\phi_n(x+te_N)<u_n$ strictly. Then, $\phi_n(x+\overline{t}e_N)$ can only touch $u_n$ at a one or two-phase free boundary point $x_n \to \overline{x}$ as $n \to +\infty$. The condition $\Gamma_{u, \infty} A + \Gamma_{v, \infty} B > 0$ and the fact that $\Gamma_{u, n} \ge \sqrt{\Lambda_v}$ imply that $x_n$ can only be a two-phase free boundary point. However, introducing the competitor
    \[
    \varphi_n(x) \coloneqq x_N + \varepsilon_n (1+o(1)) \left( \Gamma_{u, \infty}^2 A + \Gamma_{v, \infty}^2 B \right) x_N + \varepsilon_n \tilde{p}_v(x)
    \]
    for the average $q_n(x) \coloneqq \Gamma_{u, n} u_n + \Gamma_{v, n} v_n$, so that $\phi_n$ and $\varphi_n$ have the same free boundary, similarly as in Lemma \ref{lemma:linearizationBranching} one can show that $x_n$ cannot be a two-phase point, thus reaching a contradiction.

    Now suppose that $p_u(x), p_v(x)$ touch respectively $\tilde{u}_{\infty}$ and $\tilde{v}_{\infty}$ strictly from above at $\overline{x}$, and that $\Delta p < 0$. If by contradiction we assume that $\Gamma_{u, \infty} A + \Gamma_{v, \infty} B < 0$, then similarly as in Lemma \ref{lemma:linearizationBranching} we can introduce the competitors (for some suitable constant $C>0$ independent of $n$)
    \begin{align*}
        & \phi_n(x) \coloneqq \Gamma_{u, n} \left( x_N + \varepsilon_n (1+o(1)) \left( \Gamma_{u, \infty}^2 A + \Gamma_{v, \infty}^2 B \right) x_N + \varepsilon_n p(x) + C \varepsilon_n^2 \right), \\
        & \varphi_n(x) \coloneqq x_N + \varepsilon_n (1+o(1)) \left( \Gamma_{u, \infty}^2 A + \Gamma_{v, \infty}^2 B \right) x_N + \varepsilon_n p(x) + C \varepsilon_n^2
    \end{align*}
    for $u_n$ and the modulus $m_n \coloneqq \sqrt{u_n^2 + v_n^2}$ respectively. For suitable values of $t$, $\phi_n(x+te_n)$ and $\varphi_n(x+te_n)$ are barriers from above for $u_n$ and $m_n$ respectively and they share the same free boundary. Moreover, for some $\overline{t}$ they touch the free boundary of $u_n$ and $m_n$ at the same point, which consequently cannot be a one-phase nor a two-phase point. Again a contradiction.
\end{proof}

\subsubsection{Improvement}\label{subsubsec:improvement}
In this section we complete the proof of the improvement of flatness, Theorem \ref{thm:ImprovFlat}. We follow \cite{DeSilva:FreeBdRegularityOnePhase} and proceed by contradiction.

\begin{proof}[Proof of Theorem \ref{thm:ImprovFlat}.]
Without loss of generality we can assume $\nu = e_N$. Consider sequences of non-negative real numbers $\Gamma_{u, n}$, $\Gamma_{v, n}$ and functions $u_n$ and $v_n$ as in Theorem \ref{thm:ImprovFlat}, but so that the thesis is false. Thanks to Remark \ref{rmk:BddGammauvCoincidence}, without loss of generality we can assume that $\Gamma_{u, n}$ and $\Gamma_{v, n}$ are bounded from above and below by positive constants, depending only on $\sqrt{\Lambda_u}$ and $\sqrt{\Lambda_v}$.    

We treat the branching and coincidence regimes at the same time, since the proof in both cases is basically identical.

Up to a subsequence one between conditions \eqref{eqn:ImprovFlatRegimeBranching} and \eqref{eqn:ImprovFlatRegimeCoincidence} is satisfied. Hence, by Lemma \ref{lemma:compactnessBranching} or \ref{lemma:compactnessCoincidence} there exist two limit functions $\tilde{u}_{\infty}$ and $\tilde{v}_{\infty}$ which, by Lemma \ref{lemma:linearizationBranching} or \ref{lemma:linearizationCoincidence} are viscosity solutions of problem \eqref{eqn:ViscousOneSidedTwoMembranePb} or \eqref{eqn:ViscousOneSidedTransmissionPb} respectively.

By Remark \ref{rmk:ViscousLimitPbsRegularity} we have (at least) that  $\tilde{u}_{\infty}$, $\tilde{v}_{\infty} \in C^{1, 1/2}\left( \overline{B_{1/2}^+} \right)$ with the $C^{1, 1/2}$ norm controlled by the $L^{\infty}$ norm. Using this fact and that $ 0 \in \partial\{ u_n>0 \} \cap \partial\{ v_n>0 \} $, there exist $\rho, C>0$ universal such that
\[
\left\vert \tilde{u}_{\infty}(x) - \nabla \tilde{u}_{\infty}(0) \cdot x \right\vert \le C \rho^{3/2} \quad \text{and} \quad \left\vert \tilde{v}_{\infty}(x) - \nabla \tilde{v}_{\infty}(0) \cdot x \right\vert \le C \rho^{3/2} \quad \text{for all } x \in B_{\rho}.
\]
For notational simplicity, let us introduce $\nu_u \coloneqq \nabla \tilde{u}_{\infty}(0)$, $\nu_u \coloneqq \nabla \tilde{u}_{\infty}(0)$, $\overline{\nu_u} \coloneqq (\nu_{u, 1}, ..., \nu_{u, N-1})$ and $\overline{\nu_v} \coloneqq (\nu_{v, 1}, ..., \nu_{v, N-1})$. By Remark \ref{rmk:ViscousLimitPbsRegularity} we have
\begin{equation}\label{eqn:ImprovFlatPfNormals}
    \Gamma_{u, \infty} \nu_u \cdot e_n + \Gamma_{v, \infty} \nu_v \cdot e_n = 0 \quad \text{and} \quad \overline{\nu_u} = \overline{\nu_v}.
\end{equation}
By Lemma \ref{lemma:compactnessBranching} (ii) or Lemma \ref{lemma:compactnessCoincidence} (ii) we infer that (possibly modifying C, still universal), for $n \in \N$ sufficiently large
\begin{align}\label{eqn:ImrpovFlatPfIntermedBound}
\begin{split}
    & \Gamma_{u, n} \left( x_N + \varepsilon_n \nu_u \cdot x - C \varepsilon_n \rho^{3/2} \right) \le u_n(x) \le \Gamma_{u, n} \left( x_N + \varepsilon_n \nu_u \cdot x + C \varepsilon_n \rho^{3/2} \right) \quad \text{in } B_{\rho} \cap \overline{\Omega_{u_n}}, \\
    & \Gamma_{v, n} \left( x_N + \varepsilon_n \nu_v \cdot x - C \varepsilon_n \rho^{3/2} \right) \le v_n(x) \le \Gamma_{v, n} \left( x_N + \varepsilon_n \nu_v \cdot x + C \varepsilon_n \rho^{3/2} \right)  \quad \text{in } B_{\rho} \cap \overline{\Omega_{v_n}} .
\end{split}
\end{align}
In the branching regime one can actually take $\Gamma_{u, n} = \sqrt{\Lambda_u}$ and $\Gamma_{v, n} = \sqrt{\Lambda_v}$. Now introduce
\[
\Gamma_{u, n}' \coloneqq \Gamma_{u, n} (1+ \varepsilon_n \nu_{u, N}) + o(\varepsilon_n) c_{u, n} \quad \text{and} \quad \Gamma_{v, n}' \coloneqq \Gamma_{v, n} (1+ \varepsilon_n \nu_{v, N}) + o(\varepsilon_n) c_{v, n} ,
\]
where $c_{u, n}$ and $c_{v, n}$ are constants (uniformly bounded in $n$ by a universal constant $C>0$) chosen such that
\[
\Gamma_{u, n}^{'2} + \Gamma_{v, n}^{'2} = 1 .
\]
This choice is possible since by \eqref{eqn:ImprovFlatnessHp} and \eqref{eqn:ImprovFlatPfNormals}
\begin{align*}
& \Gamma_{u, n}^2 (1+ \varepsilon_n \nu_{u, N})^2 + \Gamma_{v, n}^2 (1+ \varepsilon_n \nu_{v, N})^2 = 1 + 2 \varepsilon_n \left(\Gamma_{u, n}^2 \nu_{u, N} + \Gamma_{v, n}^2 \nu_{v, N} \right) + c_n \varepsilon_n^2  = \\
& = 1 + 2 \varepsilon_n \left(\Gamma_{u, \infty}^2 \nu_{u, N} + \Gamma_{v, \infty}^2 \nu_{v, N} \right) + c_n  \varepsilon_n o(1) = 1 + c_n \varepsilon_n o(1) ,
\end{align*}
for some constants $c_n$ bounded uniformly in $n$ by $C>0$ universal. Now, we have
\begin{align*}
    & x_N + \varepsilon_n \nu_u \cdot x = (\varepsilon_n \overline{\nu_u}, 1 + \varepsilon_n \nu_{u, N}) \cdot x \quad \text{and} \quad x_N + \varepsilon_n \nu_v \cdot x = (\varepsilon_n \overline{\nu_v}, 1 + \varepsilon_n \nu_{v, N}) \cdot x , \\
    & \left\vert \Gamma_{u, n} (\varepsilon_n \overline{\nu_u}, 1 + \varepsilon_n \nu_{u, N}) \cdot x - \Gamma_{u, n}' \frac{(\varepsilon_n \overline{\nu_u}, 1)}{\vert (\varepsilon_n \overline{\nu_u}, 1) \vert} \cdot x \right\vert \le C o(\varepsilon_n) \rho \quad \text{in } B_{\rho}, \\
    & \left\vert \Gamma_{v, n} (\varepsilon_n \overline{\nu_v}, 1 + \varepsilon_n \nu_{v, N}) \cdot x - \Gamma_{u, n}' \frac{(\varepsilon_n \overline{\nu_v}, 1)}{\vert (\varepsilon_n \overline{\nu_v}, 1) \vert} \cdot x \right\vert \le C o(\varepsilon_n) \rho \quad \text{in } B_{\rho} ,
\end{align*}
for some universal constant $C > 0$. Denoting
    \[
    \nu' \coloneqq \frac{(\varepsilon_n \overline{\nu_u}, 1 )}{\vert (\varepsilon_n \overline{\nu_u}, 1) \vert} =  \frac{(\varepsilon_n \overline{\nu_v}, 1)}{\vert (\varepsilon_n \overline{\nu_v}, 1) \vert},
    \]
    where equality holds thanks to \eqref{eqn:ImprovFlatPfNormals}, for any fixed $0< \alpha <1/2$, choosing $\rho$ sufficiently small depending only on $C$ and $n$ sufficiently large, conditions \eqref{eqn:ImrpovFlatPfIntermedBound} can be rewritten as
    \begin{align*}
    \begin{split}
        & \Gamma_{u, n}' \left( x_N + \varepsilon_n \nu_u' \cdot x - \varepsilon_n \rho^{1+\alpha} \right) \le u_n(x) \le \Gamma_{u, n}' \left( x_N + \varepsilon_n \nu_u' \cdot x + \varepsilon_n \rho^{1+\alpha} \right) \quad \text{in } B_{\rho} \cap \overline{\Omega_{u_n}} , \\
        & \Gamma_{v, n}' \left( x_N + \varepsilon_n \nu_v' \cdot x - \varepsilon_n \rho^{1+\alpha} \right) \le v_n(x) \le \Gamma_{v, n}' \left( x_N + \varepsilon_n \nu_v' \cdot x + \varepsilon_n \rho^{1+\alpha} \right) \quad \text{in } B_{\rho} \cap \overline{\Omega_{v_n}} ,
    \end{split}
    \end{align*}
    so that conditions \eqref{eqn:ImprovFlatThmSum1}, \eqref{eqn:ImprofFlatThmErrors}, \eqref{eqn:ImprovFlatThmU}, \eqref{eqn:ImprovFlatThmV} are satisfied and we have reached a contradiction.
\end{proof}

\subsection{Proof of the almost-optimal regularity}\label{subsection:AlmostOptimalRegularityProof}
In this section we complete the proof of Theorem \ref{thm:C1aRegularity}.

\begin{proof}[Proof of Theorem \ref{thm:C1aRegularity}.]
    The statement on the regular part follows from Theorem \ref{thm:ImprovFlat} after a standard argument, see e.g. \cite[Section 8.2]{Velichkov:RegularityOnePhaseFreeBd}.

    Concerning the singular set, the existence of a critical dimension follows by the usual dimension reduction argument (see e.g. \cite[Section 10]{Velichkov:RegularityOnePhaseFreeBd}). Moreover, thanks to Lemma \ref{lemma:TwoPhaseBlowUpProportional} we have that all the singular cones are actually singular cones for the one-phase Bernoulli problem. Hence, the bounds from below for the critical dimension come from \cite{CaffarelliJerisonKenig04:NoSingularCones3D} ($N^{\ast} \ge 4$) and \cite{JerisonSavin15:NoSingularCones4D} ($N^{\ast} \ge 5$), while the bound from above comes from \cite{DeSilvaJerison09:SingularConesIn7D} ($N^{\ast} \le 7$).
\end{proof}

\section{Sharp regularity}\label{section:SharpRegularity}
In this section we prove Theorem \ref{thm:SharpRegularity}. Following \cite{ChangLaraSavin:BoundaryRegularityOnePhase}, we do this via Almgren's frequency formula. Since in our case the obstacle is not prescribed a priori, in order to obtain the optimal regularity we need to combine the properties of both inner and outer solutions at the two-phase free boundary. Even though the general strategy is that of \cite{ChangLaraSavin:BoundaryRegularityOnePhase}, several modifications are needed in order to take into account of these differences. For this reason, we give a self-contained proof.

Let $u, v$ be solutions of problem \eqref{eqn:MainViscousPb}. Given a regular point $x_0$ of $\partial \Omega_u \cap \partial \Omega_v$, define the height function as
\[
H(r) \coloneqq \frac{1}{r^{N-1}} \int_{\{u>0\} \cap \partial B_r(x_0)} \left(u - \nabla u (x_0) \cdot x\right)^2 + \frac{1}{r^{N-1}} \int_{\{v>0\} \cap \partial B_r(x_0)} \left(v - \nabla v(x_0) \cdot x\right)^2
\]
and introduce the frequency formula, defined as
\begin{equation}\label{eqn:frequencyFormula}
    N(r) \coloneqq \frac{r}{2} \frac{d}{dr} \ln{H(r)}.
\end{equation}
Briefly, the idea to prove the sharp regularity is as follows.
\begin{itemize}
    \item[(i)] To begin with, in section \ref{subsec:FrequencyFormula} we introduce a suitable truncation of the frequency formula \eqref{eqn:frequencyFormula}, together with some preliminary computations and bounds on error terms.
    
    \item[(ii)] In section \ref{subsec:FrequencyAlmostMonotonicity} we prove that the truncated frequency formula \eqref{eqn:frequencyFormula} is almost-monotone (see Proposition \ref{prop:FrequencyAlmostMonotone} for the precise statement).

    \item[(iii)] Using a blow-up argument to bound from below the frequency formula, thanks to the almost-optimal regularity (Theorem \ref{thm:C1aRegularity}) and the almost-monotonicity (Proposition \ref{prop:FrequencyAlmostMonotone}) we prove that
    \begin{equation}\label{eqn:AlmostSharpBoundH}
        N(r) \ge 3/2, \quad r \in (0, c)
    \end{equation}
    for a constant $c$ independent of $\beta$ and depending possibly on $x_0$, but uniform in a neighborhood of any regular point of the free boundaries. This is the content of section \ref{subsec:FrequencyBoundBelow}.

    \item[(iv)] Thanks to \eqref{eqn:AlmostSharpBoundH} we obtain the sharp bound
    \[
        H(r) \le C r^{3}, \quad  r \in (0, c) ,
    \]
    from which we directly deduce Theorem \ref{thm:SharpRegularity}. This is carried out in section \ref{subsec:SharpRegularity}.
\end{itemize}
\begin{remark}\label{rmk:FrequencyUniformConstantsNeigborhood}
 In the following, we basically fix a regular point $x_0 \in \partial \{ u>0 \} \cap \partial \{ v>0 \}$ and derive results at that point. However, it is clear that by the uniform $C^{1, \beta}$ regularity obtained in Theorem \ref{thm:C1aRegularity}, all such constants are uniform in $B_r(x_0) \cap \partial \{ u>0 \} \cap \partial \{ v>0 \}$, for some $r>0$ sufficiently small.
\end{remark}
Before we begin with the proof, several preliminaries are in order. 

Let $x_0 \in \partial \{ u>0 \} \cap \partial \{ v>0 \}$ be a regular point for the free boundaries. Without loss of generality, suppose $x_0 = 0$ and that the coordinate system is rotated so that the inner normal of the free boundaries at $x_0$ is directed towards $e_N$. 

For notational convenience, let us define the functions
\begin{align}\label{eqn:FrequencyWuWvDef}
\begin{split}
    w_u \coloneqq \frac{1}{\sqrt{\Lambda_u}} \left( u - \sqrt{\Lambda_u} x_N \right), \quad x \in \{ u>0 \} \cap B_1 ,\\
    w_v \coloneqq \frac{1}{\sqrt{\Lambda_v}} \left( v - \sqrt{\Lambda_v} x_N \right), \quad x \in \{ v>0 \} \cap B_1.
\end{split}
\end{align}
Thanks to Theorem \ref{thm:C1aRegularity}, there exist constants $C_{\beta}$ (that satisfy the same properties following \eqref{eqn:AlmostSharpBoundH}, but depend also possibly on $\beta$) such that, for all $0 < \beta < 1/2$ and $r \in (0, 1)$
\begin{align}
       & \| w_u \|_{L^{\infty}\left( \{ u>0 \} \cap B_r \right)} \le C_{\beta} r^{1+\beta}, \quad \| w_v \|_{L^{\infty}\left( \{ v>0 \} \cap B_r \right)} \le C_{\beta} r^{1+\beta}, \label{eqn:SharpRegLinftyBoundW}\\
       & \| \nabla w_u \|_{L^{\infty}\left( \{ u>0 \} \cap B_r \right)} \le C_{\beta} r^{\beta}, \quad \| \nabla w_v \|_{L^{\infty}\left( \{ v>0 \} \cap B_r \right)} \le C_{\beta} r^{\beta}, \label{eqn:SharpRegBoundGradW}\\
       & \left\vert \partial_{e_N} w_u \right\vert \le C_{\beta} r^{2\beta}, \quad \left\vert \partial_{\nu} w_u \right\vert \le C_{\beta} r^{2\beta} \text{ on } \partial \{ u>0 \} \setminus \partial \{ v>0 \} , \label{eqn:SharpRegBoundPartialWuOnePhase}\\
       & \left\vert \partial_{e_N} w_v \right\vert \le C_{\beta} r^{2\beta}, \quad \left\vert \partial_{\nu} w_v \right\vert \le C_{\beta} r^{2\beta} \text{ on } \partial \{ v>0 \} \setminus \partial \{ u>0 \} , \label{eqn:SharpRegBoundPartialWvOnePhase}\\
       & \partial_{e_N} w_u \le C_{\beta} r^{2\beta}, \quad \partial_{\nu} w_u \le C_{\beta} r^{2\beta} \text{ on } \partial \{ u>0 \} \cap \partial \{ v>0 \} , \label{eqn:SharpRegBoundPartialWuTwoPhase}\\
       & \partial_{e_N} w_v \ge - C_{\beta} r^{2\beta}, \quad \partial_{\nu} w_v \ge - C_{\beta} r^{2\beta} \text{ on } \partial \{ u>0 \} \cap \partial \{ v>0 \} , \label{eqn:SharpRegBoundPartialWvTwoPhase} \\
       & \left\vert \Lambda_u \partial_{\nu} w_u + \Lambda_v \partial_{\nu} w_v \right\vert \le C_{\beta} r^{2\beta} \text{ on } \partial \{ u>0 \} \cap \partial \{ v>0 \} \label{eqn:SharpRegBoundPartialSumTwoPhase}, 
\end{align}
where $\nu$ denotes the \emph{inner} unit normal vector to the domains under consideration. Conditions \eqref{eqn:SharpRegBoundPartialWuOnePhase}, \eqref{eqn:SharpRegBoundPartialWvOnePhase}, \eqref{eqn:SharpRegBoundPartialWuTwoPhase},  \eqref{eqn:SharpRegBoundPartialWvTwoPhase} and \eqref{eqn:SharpRegBoundPartialSumTwoPhase} follow from \eqref{eqn:SharpRegBoundGradW} and the overdetermined conditions on the free boundaries.

\begin{remark}
    In what follows, we always implicitly assume that $\partial \{ u >0 \} \cap B_r$ and $\partial \{ v>0 \} \cap B_r$ are sufficiently close to $\{ x_N = 0 \} \cap B_r$, for instance in the sense of Hausdorff distance. Of course, this is not restrictive after a suitable scaling.
\end{remark}

\subsection{The frequency formula}\label{subsec:FrequencyFormula}
In order to prove the almost monotonicity of the frequency function (that will be introduced at the end of this section, see \eqref{eqn:Truncatedrequency3/2Def}), we need to compute the first and second derivatives of the energy function and carefully estimate the error terms. This will allow us to introduce a suitable truncated frequency formula, defined in terms of a suitably modified height function. This is basically the content of the section.

Given the definitions of $w_u$ and $w_v$, we can rewrite the height function $H(r)$ as
\[
H(r) = \frac{1}{r^{N-1}} \int_{\{u>0 \} \cap \partial B_r} \Lambda_u w_u^2 + \frac{1}{r^{N-1}} \int_{\{v>0 \} \cap \partial B_r} \Lambda_v w_v^2 .
\]
By a direct computation
\begin{align*}
    \frac{d}{dr} H(r) = & \frac{2}{r^{N-1}} \int_{\{u>0 \} \cap B_r} \Lambda_u \vert \nabla w_u \vert^2 + \frac{2}{r^{N-1}} \int_{\{v>0 \} \cap B_r} \Lambda_v\vert \nabla w_v \vert^2 + \\
    + & \frac{2}{r^{N-1}} \int_{\partial \{u>0 \} \cap B_r} \Lambda_u  w_u \partial_{\nu} w_u + \frac{2}{r^{N-1}} \int_{\partial \{v>0 \} \cap B_r} \Lambda_v w_v \partial_{\nu} w_v + \\
    + & \frac{1}{r^N} \int_{\partial \{u>0 \} \cap \partial B_r} \Lambda_u  w_u^2 (x \cdot \nu) + \frac{1}{r^N} \int_{\partial \{v>0 \} \cap \partial B_r} \Lambda_v w_v^2 (x \cdot \nu) .
\end{align*}
Let us denote
\begin{align*}
A(r) \coloneqq & \frac{2}{r^{N-1}} \int_{\partial \{u>0 \} \cap B_r} \Lambda_u  w_u \partial_{\nu} w_u + \frac{2}{r^{N-1}} \int_{\partial \{v>0 \} \cap B_r} \Lambda_v w_v \partial_{\nu} w_v , \\
B(r) \coloneqq & \frac{1}{r^N} \int_{\partial \{u>0 \} \cap \partial B_r} \Lambda_u  w_u^2 (x \cdot \nu) + \frac{1}{r^N} \int_{\partial \{v>0 \} \cap \partial B_r} \Lambda_v w_v^2 (x \cdot \nu) .
\end{align*}
Now we proceed estimating $A(r)$ and $B(r)$.

Let us begin with $A(r)$. We rewrite it separating the contributions coming from the one-phase and the two-phase points.
\begin{align*}
A(r) = & \frac{2}{r^{N-1}} \int_{ \left(\partial \{u>0 \} \setminus \partial \{v>0 \} \right) \cap B_r} \Lambda_u  w_u \partial_{\nu} w_u + \frac{2}{r^{N-1}} \int_{ \left(\partial \{v>0 \} \setminus \partial \{u>0 \} \right) \cap B_r} \Lambda_v  w_v \partial_{\nu} w_v + \\
+ & \frac{2}{r^{N-1}} \int_{ \left(\partial \{u>0 \} \cap \partial \{v>0 \} \right) \cap B_r} \left( \Lambda_u  w_u \partial_{\nu} w_u + \Lambda_v  w_v \partial_{\nu} w_v  \right) \coloneqq A_1(r) + A_2(r),
\end{align*}
where $A_1(r)$ and $A_2(r)$ describe the contributions to $A(r)$ of the one-phase and two-phase points respectively. By \eqref{eqn:SharpRegLinftyBoundW}, \eqref{eqn:SharpRegBoundPartialWuOnePhase} and \eqref{eqn:SharpRegBoundPartialWvOnePhase} we have that
\[
\vert A_1(r) \vert \le C_{\beta} r^{1+3\beta}.
\]
To estimate $A_2(r)$, notice that on $\partial \{u>0 \} \cap \partial \{v>0 \}$ we have $w_u = w_v = -x_N$. Hence, by \eqref{eqn:SharpRegLinftyBoundW} and \eqref{eqn:SharpRegBoundPartialSumTwoPhase}
\[
\vert A_2(r) \vert \le C_{\beta} r^{1+3\beta}.
\]
Consequently,
\begin{equation}\label{eqn:FrequencyA(r)Bound}
    \vert A(r) \vert \le C_{\beta} r^{1+3\beta}.
\end{equation}
Now we estimate $B(r)$. To this aim, it is sufficient to notice that, by \eqref{eqn:SharpRegLinftyBoundW}
\begin{equation}\label{eqn:FrequencyB(r)Bound}
    \vert B(r) \vert \le C_{\beta} r^{1+3\beta}.
\end{equation}
As in \cite{ChangLaraSavin:BoundaryRegularityOnePhase}, in order to have nice formulae for the derivatives, we introduce the modified height function
\[
\tilde{H}(r) \coloneqq H(r) - \int_0^r (A(\rho) + B(\rho)).
\]
Notice that, by \eqref{eqn:FrequencyA(r)Bound} and \eqref{eqn:FrequencyB(r)Bound}
\begin{equation}\label{eqn:HTildeHDiffBound}
\vert \tilde{H}(r) - H(r) \vert \le C_{\beta} r^{2+3\beta} ,
\end{equation}
and by definition
\begin{equation}\label{eqn:TildeHFirstDerivative}
    \frac{d}{dr} \tilde{H}(r) = \frac{2}{r^{N-1}} \int_{\{u>0 \} \cap B_r} \Lambda_u \vert \nabla w_u \vert^2 + \frac{2}{r^{N-1}} \int_{\{v>0 \} \cap B_r} \Lambda_v\vert \nabla w_v \vert^2 .
\end{equation}
Now we give some preliminary properties for the second derivative of $\tilde{H}(r)$, that will be used in the following sections (see Remark \ref{rmk:FrequencyUniformConstantsNeigborhood} for the dependence of the constants on $x_0$).
\begin{lemma}\label{lemma:FrequencyBound2ndDeriv}
    For all $0 < \beta < 1/2$ there exists a constant $C_{\beta}$ (possibly depending also on $\beta$) such that, for all $r \in (0, 1)$
    \begin{equation}\label{eqn:FrequencyBound2ndDeriv}
        \left \vert\frac{d^2}{d r^2} \tilde{H}(r) + \frac{1}{r} \frac{d}{dr} \tilde{H}(r) - \frac{4}{r^{N-1}} \int_{\{u>0\} \cap \partial B_r} \Lambda_u \left( \partial_r w_u \right)^2 - \frac{4}{r^{N-1}} \int_{\{v>0\} \cap \partial B_r} \Lambda_v \left( \partial_r w_v \right)^2 \right\vert \le C_{\beta} r^{3\beta} .
    \end{equation}
\end{lemma}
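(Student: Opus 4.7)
The plan is to rewrite the left hand side of \eqref{eqn:FrequencyBound2ndDeriv} as a pure boundary integral over the free boundaries, by differentiating \eqref{eqn:TildeHFirstDerivative} and applying the Rellich--Pohozaev identity on the harmonic domains $\{u>0\}\cap B_r$ and $\{v>0\}\cap B_r$, and then to estimate it using the $C^{1,\beta}$ regularity from Theorem \ref{thm:C1aRegularity}. Differentiating \eqref{eqn:TildeHFirstDerivative} directly gives
$$
\tilde{H}''(r) + \frac{1}{r}\tilde{H}'(r) = -\frac{N-2}{r}\tilde{H}'(r) + \frac{2}{r^{N-1}}\int_{\{u>0\}\cap\partial B_r}\Lambda_u|\nabla w_u|^2 + \frac{2}{r^{N-1}}\int_{\{v>0\}\cap\partial B_r}\Lambda_v|\nabla w_v|^2.
$$
On the other hand, Rellich--Pohozaev on $\{u>0\}\cap B_r$, using $\Delta w_u=0$ in $\{u>0\}$ and that the outer unit normal on $\partial\{u>0\}\cap B_r$ equals $-\nu$ (the opposite of the inner normal), yields
$$
r\int_{\{u>0\}\cap\partial B_r}|\nabla w_u|^2 = (N-2)\int_{\{u>0\}\cap B_r}|\nabla w_u|^2 + 2r\int_{\{u>0\}\cap\partial B_r}(\partial_r w_u)^2 + \tilde e_u,
$$
with $\tilde e_u\coloneqq \int_{\partial\{u>0\}\cap B_r}[(x\cdot\nu)|\nabla w_u|^2-2(x\cdot\nabla w_u)\partial_\nu w_u]$, and the analogous identity for $w_v$ with error $\tilde e_v$. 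Inserting both identities into the previous display, the volume Dirichlet terms reconstitute $\frac{N-2}{r}\tilde H'(r)$ and cancel the corresponding term; what remains is that the left hand side of \eqref{eqn:FrequencyBound2ndDeriv} equals $\frac{2}{r^N}(\Lambda_u\tilde e_u+\Lambda_v\tilde e_v)$, so it suffices to show $|\Lambda_u\tilde e_u+\Lambda_v\tilde e_v|\le C_\beta r^{N+3\beta}$.

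Since the free boundaries pass through $0$ with tangent plane $\{x_N=0\}$, their $C^{1,\beta}$ regularity forces $|x\cdot\nu|\le C r^{1+\beta}$ on $\partial\{u>0\}\cap B_r$ and on $\partial\{v>0\}\cap B_r$, while \eqref{eqn:SharpRegBoundGradW} gives $|\nabla w_u|,|\nabla w_v|\le C_\beta r^\beta$. On the one-phase portion $\partial\{u>0\}\setminus\partial\{v>0\}$ one has moreover $|\partial_\nu w_u|\le C_\beta r^{2\beta}$ by \eqref{eqn:SharpRegBoundPartialWuOnePhase}, so both pointwise integrands of $\tilde e_u$ are of size $r^{1+\beta}\cdot r^{2\beta}=r^{1+3\beta}$, and integrating over a set of surface measure $O(r^{N-1})$ produces a contribution of order $r^{N+3\beta}$; the same argument using \eqref{eqn:SharpRegBoundPartialWvOnePhase} handles the one-phase part of $\tilde e_v$.

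The delicate case is the two-phase part $\Gamma_{TP}=\partial\{u>0\}\cap\partial\{v>0\}$, where $|\partial_\nu w_u|$ and $|\partial_\nu w_v|$ are a priori only $O(r^\beta)$ and the individual contributions to $\tilde e_u$, $\tilde e_v$ would only be $O(r^{N+2\beta})$. To recover the missing $r^\beta$ I would exploit that $w_u=w_v=-x_N$ on $\Gamma_{TP}$, so the tangential gradients of $w_u$ and $w_v$ on $\Gamma_{TP}$ coincide with a common vector $\tau$ of norm $O(r^\beta)$. Writing $\nabla w_u=(\partial_\nu w_u)\nu+\tau$, $\nabla w_v=(\partial_\nu w_v)\nu+\tau$ and using $\Lambda_u+\Lambda_v=1$, a direct expansion reduces the integrand of $\Lambda_u\tilde e_u+\Lambda_v\tilde e_v$ on $\Gamma_{TP}$ to
$$
-(x\cdot\nu)\bigl[\Lambda_u(\partial_\nu w_u)^2+\Lambda_v(\partial_\nu w_v)^2\bigr] + (x\cdot\nu)|\tau|^2 - 2(x\cdot\tau)\bigl[\Lambda_u\partial_\nu w_u+\Lambda_v\partial_\nu w_v\bigr].
$$
The first two pieces are $O(r^{1+3\beta})$ by $|x\cdot\nu|\le C r^{1+\beta}$ and the gradient bound; in the last one only the combination $\Lambda_u\partial_\nu w_u+\Lambda_v\partial_\nu w_v$ appears, which is bounded by $C_\beta r^{2\beta}$ thanks to the crucial joint estimate \eqref{eqn:SharpRegBoundPartialSumTwoPhase}, so this term is again $O(r^{1+3\beta})$. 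Integrating over $\Gamma_{TP}\cap B_r$ yields the missing $O(r^{N+3\beta})$ bound and completes the proof. The main obstacle is precisely this combination step on $\Gamma_{TP}$: without \eqref{eqn:SharpRegBoundPartialSumTwoPhase} the argument would only give $C_\beta r^{2\beta}$.
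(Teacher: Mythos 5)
Your proof is correct and follows essentially the same route as the paper: differentiating \eqref{eqn:TildeHFirstDerivative}, applying the Rellich--Pohozaev identity on each positivity set, and estimating the resulting free-boundary error terms by splitting into one-phase and two-phase contributions, with the two-phase part controlled through the common tangential gradient (from $w_u=w_v=-x_N$ there) together with the joint bound \eqref{eqn:SharpRegBoundPartialSumTwoPhase}. The only immaterial differences are that you organize the two-phase cancellation symmetrically in normal and tangential components rather than via the paper's rewriting of the term $E(r)$, and that you carry the correct factor $2$ in the Pohozaev divergence identity, which the paper's displayed identity omits as a harmless typo.
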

\begin{proof}
By a direct computation,
\[
\frac{d^2}{d r^2} \tilde{H}(r) + \frac{n-1}{r} \frac{d}{dr} \tilde{H}(r) = \frac{2}{r^{N-1}} \int_{ \{u>0\} \cap \partial B_r } \Lambda_u \vert \nabla w_u \vert^2 + \frac{2}{r^{N-1}} \int_{ \{v>0\} \cap \partial B_r } \Lambda_v \vert \nabla w_v \vert^2 .
\]
Computing $\frac{d}{dr}\tilde{H}(r)$ using the following identity
\[
(N-2) \vert \nabla f \vert ^2 - \left( \nabla f \cdot x \right) \Delta f = \diverg \left( \vert \nabla f \vert^2 x - \left( \nabla f \cdot x \right) \nabla f \right) 
\]
and separating the contributions coming from $\partial B_r$ and the free boundaries, we obtain
\begin{align*}
    \frac{d^2}{d r^2} \tilde{H}(r) + \frac{1}{r} \frac{d}{dr} \tilde{H}(r) = & \frac{4}{r^{N-1}} \int_{\{u>0\} \cap \partial B_r} \Lambda_u \left( \partial_r w_u \right)^2 + \frac{4}{r^{N-1}} \int_{\{v>0\} \cap \partial B_r} \Lambda_v \left( \partial_r w_v \right)^2 + \\
    + & \frac{2}{r^N} \int_{ \partial \{u>0\} \cap B_r} \Lambda_u \left( \vert \nabla w_u \vert^2 (x \cdot \nu) - (\nabla w_u \cdot x) \partial_{\nu} w_u  \right) + \\
    + & \frac{2}{r^N} \int_{ \partial \{v>0\} \cap B_r} \Lambda_v \left( \vert \nabla w_v \vert^2 (x \cdot \nu) - (\nabla w_v \cdot x) \partial_{\nu} w_v  \right) .
\end{align*}
In order to estimate the order of the terms in the right hand side integrated on the free boundaries, we separate the contributions of the one-phase and two-phase points. To this aim, we define
\begin{align}\label{eqn:FrequencyCompute2ndDeriv}
\begin{split}
    C(r) \coloneqq & \frac{2}{r^N} \int_{ \left( \partial \{u>0\} \setminus \partial \{v>0\} \right)\cap B_r} \Lambda_u \left( \vert \nabla w_u \vert^2 (x \cdot \nu) - (\nabla w_u \cdot x) \partial_{\nu} w_u  \right) + \\
    + & \frac{2}{r^N} \int_{ \left( \partial \{v>0\} \setminus \partial \{u>0\} \right)\cap B_r} \Lambda_v \left( \vert \nabla w_v \vert^2 (x \cdot \nu) - (\nabla w_v \cdot x) \partial_{\nu} w_v  \right) , \\
    D(r) \coloneqq & \frac{2}{r^N} \int_{ \left( \partial \{u>0\} \cap \partial \{v>0\} \right)\cap B_r} \left( \Lambda_u  \vert \nabla w_u \vert^2 + \Lambda_v  \vert \nabla w_v \vert^2 \right) (x \cdot \nu) , \\
    E(r) \coloneqq & \frac{2}{r^N} \int_{ \left( \partial \{u>0\} \cap \partial \{v>0\} \right)\cap B_r} \left( \Lambda_u (\nabla w_u \cdot x) \partial_{\nu} w_u + \Lambda_v (\nabla w_v \cdot x) \partial_{\nu} w_v \right) .
\end{split}
\end{align}
Concerning $C(r)$, we see from \eqref{eqn:SharpRegBoundGradW}, \eqref{eqn:SharpRegBoundPartialWuOnePhase} and \eqref{eqn:SharpRegBoundPartialWvOnePhase} that
\begin{equation}\label{eqn:FrequencyC(r)Bound}
\vert C(r) \vert \le C_{\beta} r^{3\beta}.
\end{equation}
In a similar way, we see from \eqref{eqn:SharpRegLinftyBoundW} and \eqref{eqn:SharpRegBoundGradW} that
\begin{equation}\label{eqn:FrequencyD(r)Bound}
\vert D(r) \vert \le C_{\beta} r^{3\beta}.
\end{equation}
In order to estimate $E(r)$, we rewrite the integrand as
\begin{align*}
    & \Lambda_u (\nabla w_u \cdot x) \partial_{\nu} w_u + \Lambda_v (\nabla w_v \cdot x) \partial_{\nu} w_v = \\
    = &(\nabla w_u \cdot x) \left(\Lambda_u \partial_{\nu} w_u + \Lambda_v \partial_{\nu} w_v \right) + \Lambda_v \left( (\nabla w_v - \nabla w_u) \cdot x \right) \partial_{\nu} w_v = \\
    = & (\nabla w_u \cdot x) \left(\Lambda_u \partial_{\nu} w_u + \Lambda_v \partial_{\nu} w_v \right) + \Lambda_v \left( \frac{\partial_{\nu} u}{\sqrt{\Lambda_u}} - \frac{\partial_{\nu} v}{\sqrt{\Lambda_v}} \right) (x \cdot \nu) \partial_{\nu} w_v
\end{align*}
Using \eqref{eqn:SharpRegBoundGradW} and \eqref{eqn:SharpRegBoundPartialSumTwoPhase} to estimate the first addend, \eqref{eqn:SharpRegLinftyBoundW}, \eqref{eqn:SharpRegBoundGradW} and the definitions of $w_u$ and $w_v$ to estimate the second addend, we obtain that
\begin{equation}\label{eqn:FrequencyE(r)Bound}
\vert E(r) \vert \le C_{\beta} r^{3\beta}.
\end{equation}
Combining \eqref{eqn:FrequencyCompute2ndDeriv}, \eqref{eqn:FrequencyC(r)Bound}, \eqref{eqn:FrequencyD(r)Bound} and \eqref{eqn:FrequencyE(r)Bound} we deduce \eqref{eqn:FrequencyBound2ndDeriv}.
\end{proof}
To conclude this section, we introduce the frequency formula we will be using in the following. Let $0 < \sigma < 1/4$ denote a small positive real number, and introduce the following truncated frequency function
\begin{equation}\label{eqn:Truncatedrequency3/2Def}
    \tilde{N}(r) \coloneqq \frac{r}{2} \frac{d}{dr} \ln{\max \left(\tilde{H}(r), r^{3+\sigma} \right) }.
\end{equation}
We proceed in the following section proving an almost-monotonicity result for $\tilde{N}(r)$.

\subsection{Compactness of blow-up sequences}\label{subsec:FrequencyCompactnessBlowUp}
The main technical tool that we use to deal with the sharp regularity, is the following (see also Remark \ref{rmk:FrequencyUniformConstantsNeigborhood} for the dependence of the constants on $x_0$) 
\begin{lemma}\label{lemma:FrequencyCompactnessBlowUp}
    Let $x_0 \in \partial \Omega_u \cap \partial \Omega_v$ be a regular point for the free boundaries, $r_n \to 0$ a sequence of positive numbers, $w_u$, $w_v$ as in \eqref{eqn:FrequencyWuWvDef} and define the rescaled functions
    \[
    w_{u, n}(x) \coloneqq \frac{w_u(r_n)(x)}{\tilde{H}(r_n)^{1/2}} , \quad w_{v,n}(x) \coloneqq \frac{w_v(r_n)(x)}{\tilde{H}(r_n)^{1/2}} , \quad u_n(x) \coloneqq \frac{u(r_n)(x)}{\tilde{H}(r_n)^{1/2}} , \quad v_n(x) \coloneqq \frac{v(r_n)(x)}{\tilde{H}(r_n)^{1/2}}
    \]
    and the rescaled domains
    \[
    \Omega_{u, n} \coloneqq \left\{ x \in \R^N ; r_n (x + x_0) \in \Omega_u \cap B_{r_n}(x_0)\right\} \quad \text{and} \quad \Omega_{v, n} \coloneqq \left\{ x \in \R^N ; r_n (x + x_0) \in \Omega_v \cap B_{r_n}(x_0)\right\} .
    \]
    Suppose that, for some $0<\sigma<1/4$ and constant $K>0$
    \begin{equation}\label{eqn:FrequencyCompactnessHp}
            \tilde{H}(r_n) \ge r_n^{3+\sigma} \quad \text{and} \quad
            r_n\frac{d}{dr}\ln{\tilde{H}(r_n)} = r_n \frac{\frac{d}{dr} \tilde{H}(r_n)}{\tilde{H}(r_n)} \le K .
    \end{equation}
    Then, any fixed $0 < \beta < 1/2$ sufficiently close to $1/2$ and $A \Subset B_1$ there exists a constant $C_{A, \beta}>0$ (satisfying the same properties following \eqref{eqn:AlmostSharpBoundH}, but possibly depending also on $\beta$ and $A$) such that
    \begin{itemize}
        \item[(i)] 
        \[
        \| w_{u, n} \|_{L^{\infty}\left( \overline{\Omega_{u, n} \cap A} \right)} + \| w_{v, n} \|_{L^{\infty}\left( \overline{\Omega_{v, n} \cap A} \right)} \le C_{A, \beta} (1+K) ,
        \]

        \item[(ii)] For $n$ sufficiently large
        \[
        \| w_{u, n} \|_{C^{1, \beta}\left( \overline{\Omega_{u, n} \cap A} \right)} + \| w_{v, n} \|_{C^{1, \beta}\left( \overline{\Omega_{v, n} \cap A} \right)} \le C_{A, \beta} (1+K) .
        \]
    \end{itemize}
\end{lemma}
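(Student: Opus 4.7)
The proof splits into two steps mirroring the two conclusions.

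For step (i), the $L^\infty$ bound, I first recast the two hypotheses as uniform controls on the rescaled functions. The identity \eqref{eqn:TildeHFirstDerivative} together with the frequency-type bound $r_n\tilde H'(r_n)/\tilde H(r_n)\le K$ gives, after the change of variables $y=r_n x$, the uniform $H^1$ estimate
\[
\int_{B_1\cap\Omega_{u,n}}|\nabla w_{u,n}|^2+\int_{B_1\cap\Omega_{v,n}}|\nabla w_{v,n}|^2\le CK.
\]
Choosing $\beta$ close enough to $1/2$ so that $2+3\beta>3+\sigma$, the error bound \eqref{eqn:HTildeHDiffBound} combined with $\tilde H(r_n)\ge r_n^{3+\sigma}$ gives $H(r_n)/\tilde H(r_n)\to 1$, whence the trace control
\[
\int_{\partial B_1\cap\Omega_{u,n}}w_{u,n}^2+\int_{\partial B_1\cap\Omega_{v,n}}w_{v,n}^2\le C.
\]
Now $w_{u,n}$ and $w_{v,n}$ are harmonic on their positivity sets, whose free boundaries are $C^{1,\beta}$ and converge to the hyperplane $\{x_N=0\}$ by Theorem \ref{thm:C1aRegularity}. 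Combining the identity $w_{u,n}=-r_n x_N/\tilde H(r_n)^{1/2}$ on the rescaled free boundary, the refined bounds \eqref{eqn:SharpRegBoundPartialWuOnePhase}--\eqref{eqn:SharpRegBoundPartialSumTwoPhase} on the normal derivatives (and in particular the two-phase coupling \eqref{eqn:SharpRegBoundPartialSumTwoPhase}), and the $H^1$ and trace controls above, I will run a boundary De~Giorgi/Moser-type iteration for the harmonic functions $w_{u,n}, w_{v,n}$ in the (nearly flat) Lipschitz domains $\Omega_{u,n}\cap B_1$ and $\Omega_{v,n}\cap B_1$ to obtain the bound on $\overline{\Omega_{u,n}\cap A}$ and $\overline{\Omega_{v,n}\cap A}$.

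For step (ii), once the $L^\infty$ estimate of (i) is in hand, the $C^{1,\beta}$ bound follows directly from Remark \ref{rmk:ImprovFlatEstimateLinearizedFunctions}, which is a consequence of the improvement of flatness (Theorem \ref{thm:ImprovFlat}). The natural flatness parameter here is $\varepsilon_n:=r_n/\tilde H(r_n)^{1/2}\le r_n^{(1-\sigma)/2}\to 0$, so the smallness hypothesis $\varepsilon_n\|w_{u,n}\|_{L^\infty}\le\varepsilon_0$ is satisfied for $n$ large (using the $L^\infty$ control from (i)), and similarly for $w_{v,n}$. Chaining the two estimates yields $C_{A,\beta}(1+K)$.

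The main technical difficulty is step (i), and specifically propagating the bound up to the free boundary. The pointwise boundary data $w_{u,n}|_{\partial\Omega_{u,n}}=-r_n x_N/\tilde H(r_n)^{1/2}$ only satisfies $\|\cdot\|_{L^\infty}\le Cr_n^{1+\beta}/\tilde H(r_n)^{1/2}$, which is not a priori bounded uniformly in $n$ when $\tilde H(r_n)$ is near the lower threshold $r_n^{3+\sigma}$ (since $2(1+\beta)<3+\sigma$ whenever $\beta<1/2$). A crude maximum principle therefore does not suffice: one has to really use the $H^1$ and boundary $L^2$ controls together with the smallness of the normal derivatives \eqref{eqn:SharpRegBoundPartialWuOnePhase}--\eqref{eqn:SharpRegBoundPartialSumTwoPhase} on the free boundaries and the coupling of $w_{u,n}$ with $w_{v,n}$ at the two-phase boundary via \eqref{eqn:SharpRegBoundPartialSumTwoPhase}, so as to transfer the interior control up to the free boundary through a boundary elliptic iteration in the Lipschitz domains $\Omega_{u,n}\cap B_1$ and $\Omega_{v,n}\cap B_1$.
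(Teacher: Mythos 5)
Your overall architecture matches the paper's: (ii) is reduced to (i) via Remark \ref{rmk:ImprovFlatEstimateLinearizedFunctions}, and (i) is attacked by converting \eqref{eqn:FrequencyCompactnessHp} into uniform $H^1$ and boundary-trace controls and then running a boundary elliptic estimate in the (nearly flat) rescaled domains. You also correctly identify the central obstruction, namely that the Dirichlet data $-r_nx_N/\tilde H(r_n)^{1/2}$ on the free boundary is not uniformly bounded, so a crude maximum principle fails. However, the step where you actually close the $L^\infty$ bound is left as ``a boundary De~Giorgi/Moser-type iteration'' driven by the normal-derivative bounds and ``the two-phase coupling \eqref{eqn:SharpRegBoundPartialSumTwoPhase}'', and here there is a genuine gap. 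At two-phase points each normal derivative is controlled only from \emph{one} side: $\partial_\nu w_u\le C_\beta r^{2\beta}$ by \eqref{eqn:SharpRegBoundPartialWuTwoPhase} and $\partial_\nu w_v\ge -C_\beta r^{2\beta}$ by \eqref{eqn:SharpRegBoundPartialWvTwoPhase}, and combining these with \eqref{eqn:SharpRegBoundPartialSumTwoPhase} does \emph{not} yield two-sided bounds on either $\partial_\nu w_u$ or $\partial_\nu w_v$ individually (check: the three inequalities only reproduce each other). So any iteration treating $w_{u,n}$ or $w_{v,n}$ symmetrically has, near the two-phase set, only the boundary information needed to bound $w_{v,n}$ from above and $w_{u,n}$ from below; the other two directions cannot be obtained this way.

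The paper's proof supplies the missing ingredient through a sequential bootstrap in four steps: (a) bound $w_{v,n}^+$ from above and (b) $w_{u,n}^-$ from above, by comparison with harmonic replacements $h$ solving mixed problems with $\partial_\nu h=0$ on the free boundary (whose $L^\infty$ norm on $A$ is controlled by their $L^2$ norm, hence by the trace and gradient controls), using exactly the one-sided conditions \eqref{eqn:SharpRegBoundPartialWvTwoPhase} and \eqref{eqn:SharpRegBoundPartialWuTwoPhase}; then (c)--(d) the remaining bounds on $w_{v,n}^-$ and $w_{u,n}^+$ are obtained by feeding the bounds from (a)--(b) back in as \emph{Dirichlet} data on the two-phase boundary via the pointwise identity $w_{u,n}=w_{v,n}$ there (both equal the rescaling of $-x_N$), which is the actual coupling mechanism — not \eqref{eqn:SharpRegBoundPartialSumTwoPhase}. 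You would need to incorporate this ordering for your iteration to close. Separately, in step (ii) your flatness parameter is inverted: the smallness required by Remark \ref{rmk:ImprovFlatEstimateLinearizedFunctions} is $\tilde H(r_n)^{1/2}/r_n\le\varepsilon_0$, not $r_n/\tilde H(r_n)^{1/2}$ (which in fact blows up under $\tilde H(r_n)\ge r_n^{3+\sigma}$); the correct smallness follows from \eqref{eqn:SharpRegLinftyBoundW}, i.e.\ from Theorem \ref{thm:C1aRegularity}, not from the hypothesis \eqref{eqn:FrequencyCompactnessHp}.
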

\begin{remark}\label{rmk:PrecompactH1/2Tor^3/2}
    An analogous proof works if $\tilde{H}(r_n)$ is substituted by $r_n^{3/2}$ (with the second hypothesis still remaining $\tilde{N}(r_n) \le K$), as long as we know that $\tilde{H}(r) \le C r^{3/2}$. 
\end{remark}
\begin{proof}
Without loss of generality, we can assume $x_0 = 0$.

Point $(ii)$ follows immediately from point (i) and Remark \ref{rmk:ImprovFlatEstimateLinearizedFunctions}, as long as the condition
\[
\frac{\tilde{H}(r_n)^{1/2}}{r_n} \le \varepsilon_0,
\]
which is satisfied for $n$ sufficiently large, thanks to \eqref{eqn:SharpRegLinftyBoundW}.

Now we proceed to prove point (i). To this aim we bound $w_{u, n}$ and $w_{v, n}$ from above and below, using harmonic functions solving mixed Dicichlet/Neumann problems. For this reason, we divide the proof in four steps, depending on the bound we want to achieve. We only give the proof of the main steps and sketch the similar parts.
\begin{itemize}
\item[a)] We begin bounding $w_{v, n}^+$ from above. To this aim, introduce the function $h_{v, n}(x) \in H^1\left( \Omega_{v, n} \cap B_1\right)$, weak solution of
\[
\begin{cases}
    \Delta h_{v, n} = 0 & \text{in } \Omega_{v, n} \cap B_1 , \\
    h_{v, n}= w_{v, n}^+ & \text{on } \Omega_{v, n} \cap \partial B_1, \\
    \partial_{\nu} h_{v, n} = 0 & \text{on } \partial \Omega_{v, n} \cap B_1.
\end{cases}
\]
In particular,
\[
\int_{\Omega_{v, n} \cap B_1 } \vert \nabla h_{v, n} \vert^2 \le \int_{\Omega_{v, n} \cap B_1 } \vert \nabla w_{v, n}^+ \vert^2
\]
Moreover, notice that by elliptic regularity (see e.g. \cite[Theorem 6.13]{GilbargTrudinger:Elliptic98}), a flattening of the boundary and a reflection, thanks to the exterior cone condition we have $h_{v, n} \in C^0\left(\overline{\Omega_{v, n} \cap B_1} \right) \cap C^{2, \alpha}_{loc}\left(\Omega_{v, n} \cap B_1 \right)$ for some $0 < \alpha < 1$. 

By \eqref{eqn:SharpRegBoundPartialWvTwoPhase} and the (e.g. viscous) comparison principle, we have
\begin{equation}\label{eqn:FrequencyCompactnessProof0}
    h_{v, n}(x) \ge 0 \quad \text{and} \quad w_{v, n}^+(x) \le h_{v, n}(x) - C_{\beta} \frac{r_n^{1+2 \beta}}{\tilde{H}(r_n)^{1/2}} (x_n - 1) \quad \text{in } \overline{\Omega_{v, n} \cap B_1}.
\end{equation}
Now, by (interior, after reflection) elliptic regularity we have that
\[
\| h_{v, n} \|_{L^{\infty}\left( \overline{\Omega_{v, n} \cap A} \right)} \le C_{A} \| h_{v, n} \|_{L^2\left( \Omega_{v, n} \cap B_1 \right)}.
\]
Moreover, integrating over lines directed as $e_N$, starting from $\partial B_1 \cap \Omega_{v, n}$, we have
\begin{equation}\label{eqn:FrequencyCompactnessProof1}
    \int_{\Omega_{v, n} \cap B_1} h_n^2 \le C \int_{\Omega_{v, n} \cap \partial B_1} \left( w_{v, n}^+ \right)^2 + \int_{\Omega_{v, n} \cap B_1} \left\vert \nabla w_{v, n}^+ \right\vert^2 \le C (1+K),
\end{equation}
where in the last inequality we have used \eqref{eqn:FrequencyCompactnessHp}. Using again \eqref{eqn:FrequencyCompactnessHp} we also have that
\begin{equation}\label{eqn:FrequencyCompactnessProof2}
    \frac{r_n^{1+2 \beta}}{\tilde{H}(r_n)^{1/2}} \le r_n^{1/4},
\end{equation}
so that, combining \eqref{eqn:FrequencyCompactnessProof0}, \eqref{eqn:FrequencyCompactnessProof1} and \eqref{eqn:FrequencyCompactnessProof2} and using once again \eqref{eqn:FrequencyCompactnessHp} we obtain
\begin{equation}\label{eqn:FrequencyCompactnessWvBdAbove}
    \| w_{v, n}^+ \|_{L^{\infty}\left( \overline{\Omega_{v, n} \cap A} \right)} \le C_{A, \beta} (1+K).
\end{equation}

\item[b)] Next, we bound $w_{u, n}$ from below. Since this is equivalent to bound $w_{u, n}^-$ from above, proceeding exactly as in point a), but using \eqref{eqn:SharpRegBoundPartialWuTwoPhase} in place of \eqref{eqn:SharpRegBoundPartialWvTwoPhase}, we obtain
\begin{equation}\label{eqn:FrequencyCompactnessWuBdBelow}
    \| w_{u, n}^- \|_{L^{\infty}\left( \overline{\Omega_{u, n} \cap A} \right)} \le C_{A, \beta} (1+K).
\end{equation}
We omit the proof.

\item[c)] Let us bound $w_{v, n}$ from below, or better $w_{v, n}^-$ from above. To this aim, we exploit the bound from above \eqref{eqn:FrequencyCompactnessWuBdBelow} on $w_{u, n}^-$. Indeed, notice that $w_{u, n} = w_{v, n}$ on $\partial \{ u_n > 0 \} \cap \partial \{ v_n > 0 \}$, so that $w_{v, n}^- \le C_{A, \beta} (1+K)$ on such set.

Now, defining (with a little abuse of notation) $h_{v, n}(x) \in H^1\left( \Omega_{v, n} \cap B_1\right)$ as the weak solution of
\[
\begin{cases}
    \Delta h_{v, n} = 0 & \text{in } \Omega_{v, n} \cap B_1 , \\
    h_{v, n} = w_{v, n}^- & \text{on } \Omega_{v, n} \cap \partial B_1, \\
    \partial_{\nu} h_{v, n} = 0 & \text{on } \partial \Omega_{v, n} \cap B_1,
\end{cases}
\]
by the previous observation, \eqref{eqn:SharpRegBoundPartialWvOnePhase} and the comparison principle we have
\[
    h_{v, n}(x) \ge 0 \quad \text{and} \quad w_{v, n}^-(x) \le h_{v, n}(x) + C_{A, \beta} (1+K) - C_{\beta} \frac{r_n^{1+2 \beta}}{\tilde{H}(r_n)^{1/2}} (x_n -1 ) \quad \text{in } \overline{\Omega_{v, n} \cap B_1}.
\]
Hence, proceeding as in point a) we deduce
\begin{equation}\label{eqn:FrequencyCompactnessWvBdBelow}
    \| w_{v, n}^- \|_{L^{\infty}\left( \overline{\Omega_{v, n} \cap A} \right)} \le C_{A, \beta} (1+K).
\end{equation}

\item[d)] We are left to prove a bound from above for $w_{u, n}^+$. Similarly as in the previous point, we can use \eqref{eqn:FrequencyCompactnessWvBdAbove} to bound $w_{u, n}^+$ from above at the two-phase points. Combining this with \eqref{eqn:SharpRegBoundPartialWuOnePhase} we obtain
\begin{equation}\label{eqn:FrequencyCompactnessWuBdAbove}
    \| w_{u, n}^+ \|_{L^{\infty}\left( \overline{\Omega_{v, n} \cap A} \right)} \le C_{A, \beta} (1+K).
\end{equation}
\end{itemize}
Combining \eqref{eqn:FrequencyCompactnessWvBdAbove}, \eqref{eqn:FrequencyCompactnessWuBdBelow}, \eqref{eqn:FrequencyCompactnessWvBdBelow} and \eqref{eqn:FrequencyCompactnessWuBdAbove} conludes the proof of point (i).
\end{proof}

\subsection{Almost-monotinicity}\label{subsec:FrequencyAlmostMonotonicity}
The main result of this section is contained in the following (we refer to Remark \ref{rmk:FrequencyUniformConstantsNeigborhood} for the dependence of the constants on the point $x_0$)
\begin{proposition}\label{prop:FrequencyAlmostMonotone}
There exist constants $c, C>0$ such that the function
\[
(1+Cr^{\sigma}) \tilde{N}(r), \quad r \in (0, c)
\]
is non-decreasing in $r$.
\end{proposition}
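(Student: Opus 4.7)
The plan is to treat the two regimes of the truncation separately. On the open set $\{\tilde H(r) < r^{3+\sigma}\}$ one has $\max(\tilde H(r), r^{3+\sigma}) = r^{3+\sigma}$, so $\tilde N(r)\equiv(3+\sigma)/2$ is constant and $(1+Cr^\sigma)\tilde N(r)$ is trivially non-decreasing; at a transition point $r_0$ with $\tilde H(r_0)=r_0^{3+\sigma}$, the very condition that $\tilde H$ is about to cross the threshold forces the appropriate one-sided comparison on $\tilde H'(r_0)$, so $\tilde N$ only jumps up as $r$ increases through $r_0$. The substantive work is on $\{\tilde H(r) > r^{3+\sigma}\}$, where $\tilde N(r) = r\tilde H'(r)/(2\tilde H(r))$ and
\[
\tilde N'(r) = \frac{r}{2\tilde H(r)}\left[\tilde H''(r)+\frac{\tilde H'(r)}{r}-\frac{\tilde H'(r)^2}{\tilde H(r)}\right].
\]
Using $\Delta w_u=\Delta w_v=0$ on their positivity sets together with \eqref{eqn:TildeHFirstDerivative}, integration by parts yields
\[
\tilde H'(r)-A(r) = \frac{2}{r^{N-1}}\int_{\partial B_r\cap\Omega_u}\Lambda_u w_u\,\partial_r w_u + \frac{2}{r^{N-1}}\int_{\partial B_r\cap\Omega_v}\Lambda_v w_v\,\partial_r w_v.
\]
The Cauchy--Schwarz inequality on $\partial B_r$, combined with Lemma \ref{lemma:FrequencyBound2ndDeriv} to rewrite $\int_{\partial B_r}(\partial_r w_\bullet)^2$ in terms of $\tilde H''(r)+\tilde H'(r)/r$ up to an $O(r^{3\beta})$ error, gives the Almgren-type inequality
\[
(\tilde H'(r)-A(r))^2 \le H(r)\left[\tilde H''(r)+\frac{\tilde H'(r)}{r}+O(r^{3\beta})\right].
\]

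Expanding the square, dividing by $H(r)$, and using $|\tilde H(r)-H(r)|\le C_\beta r^{2+3\beta}$ from \eqref{eqn:HTildeHDiffBound}, the goal of $(1+Cr^\sigma)\tilde N(r)$ being non-decreasing reduces to showing
\[
\frac{r\,\mathcal E(r)}{2\tilde H(r)} \le Cr^{\sigma-1}\tilde N(r),
\]
where $\mathcal E(r)$ collects the cross terms $A(r)\tilde H'(r)/H(r)$, $\tilde H'(r)^2(\tilde H-H)/(H\tilde H)$, $A(r)^2/H(r)$, and the Lemma \ref{lemma:FrequencyBound2ndDeriv} remainder. The a priori bounds \eqref{eqn:SharpRegLinftyBoundW}--\eqref{eqn:SharpRegBoundPartialSumTwoPhase} are not by themselves sharp enough for this, so I would invoke Lemma \ref{lemma:FrequencyCompactnessBlowUp}: after reducing to the non-trivial case $\tilde N(r)\le K$ (in which case its hypotheses $\tilde H(r)\ge r^{3+\sigma}$ and $r\tilde H'(r)/\tilde H(r)\le 2K$ are met), the lemma provides the scale-invariant bounds $|w_\bullet|\lesssim(1+K)\tilde H(r)^{1/2}$ and $|\nabla w_\bullet|\lesssim(1+K)\tilde H(r)^{1/2}/r$ up to the free boundaries. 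Substituting these in $A(r)$, $B(r)$ and the Lemma \ref{lemma:FrequencyBound2ndDeriv} remainder, together with $\tilde H'(r)/\tilde H(r)=2\tilde N(r)/r$, replaces each $r^{3\beta}$ factor in $\mathcal E(r)$ by a quantity proportional to $\tilde H(r)/r^2\le Cr^{2\beta}$. The resulting ratio $r\mathcal E(r)/\tilde H(r)$ is then bounded by $C(1+K)^2 r^{\sigma-1}\tilde N(r)$ once $\beta$ is taken sufficiently close to $1/2$ and $\sigma<1/4$.

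The hard part is precisely this last step: the preliminary H\"older-type estimates for $w_u$ and $w_v$ are too weak by themselves, and the scale-invariant compactness bounds of Lemma \ref{lemma:FrequencyCompactnessBlowUp} are essential to replace the $r^{3\beta}$ powers in the error terms by the much smaller $\tilde H(r)/r^2$. The constraint $\sigma<1/4$ is tuned exactly so that, choosing $\beta$ close enough to $1/2$, this absorption into the $r^{\sigma-1}\tilde N(r)$ correction succeeds.
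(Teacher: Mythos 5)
Your setup coincides with the paper's: the reduction to the regime $\tilde H(r)\ge r^{3+\sigma}$ (the other regime and the crossing points are handled as you say), the logarithmic-derivative identity for $\tilde N$, the integration by parts giving $\tilde H'(r)-A(r)=I(r)$, and the Cauchy--Schwarz step combined with Lemma \ref{lemma:FrequencyBound2ndDeriv}. The gap is in the absorption of the error terms. To obtain $\frac{d}{dr}\tilde N(r)\ge -Cr^{\sigma-1}\tilde N(r)$ you must bound every error by a quantity carrying a factor of $\tilde N(r)$ (equivalently of $\tilde H'(r)$), because a priori $\tilde N(r)$ could degenerate to $0$ in the regime $\tilde H(r)\ge r^{3+\sigma}$, making the right-hand side vanish. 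The additive remainder $C_\beta r^{3\beta}$ of Lemma \ref{lemma:FrequencyBound2ndDeriv} (and likewise the $A(r)^2/H(r)$-type cross terms) carries no such factor, and the scale-invariant bounds of Lemma \ref{lemma:FrequencyCompactnessBlowUp} do not create one: even after replacing $r^{3\beta}$ by a quantity of order $(1+K)^{3}\tilde H(r)^{3/2}r^{-3}$ as you propose, the resulting contribution to $\tilde N'(r)$ is of order $\tilde H(r)^{1/2}r^{-2}$, and bounding this by $Cr^{\sigma-1}\tilde N(r)$ still requires a lower bound of the form $\tilde N(r)\gtrsim r^{1/2-\sigma/2}$, which you never establish. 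Note also that the hypotheses of Lemma \ref{lemma:FrequencyCompactnessBlowUp} force $\tilde N(r)\le K$, which is precisely the \emph{hard} case (small frequency), not one you can set aside.

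The paper closes exactly this gap with Lemma \ref{lemma:DerivativeTildeHBoundBelow}: under $\tilde H(r)\ge r^{3+\sigma}$ one has the quantitative lower bound $\frac{d}{dr}\tilde H(r)\ge Cr^{2+\sigma}$, proved by contradiction via the compactness of Lemma \ref{lemma:FrequencyCompactnessBlowUp} and a Poincar\'e/trace argument (if $\frac{d}{dr}\tilde H(r_n)=o(r_n^{2+\sigma})$, the normalized rescalings $w_{u,n},w_{v,n}$ would converge to constants vanishing at the origin, contradicting the unit normalization of their boundary traces). Once $I(r)\ge cr^{2+\sigma}$ is available, the crude bounds \eqref{eqn:SharpRegLinftyBoundW}--\eqref{eqn:SharpRegBoundPartialSumTwoPhase} already suffice: for instance $C_\beta r^{3\beta}/\tilde H'(r)\le C_\beta r^{3\beta-2-\sigma}\le C_\beta r^{\sigma-1}$ for $\beta$ close to $1/2$ and $\sigma<1/4$. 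So you correctly sensed that the compactness lemma is the essential extra input, but it must be routed through a lower bound on $\tilde H'$ rather than through sharpened pointwise bounds on $w_u,w_v$; as written, your argument does not yield the differential inequality.
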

\begin{remark}\label{rmk:FrequencyMonotoneRegimeToProve}
 We only need to analyze the case $\tilde{H}(r) \ge r^{3+\sigma}$ since in the regime $\tilde{H}(r) \le r^{3+\sigma}$ a direct computation gives $\frac{d}{dr} \tilde{H}(r) = 0$.   
\end{remark}
It will be clear from the proof of Proposition \ref{prop:FrequencyAlmostMonotone} that we need to estimate from below the quantity $\frac{d}{dr}\tilde{N}(r)$. This is the content of the following (Remark \ref{rmk:FrequencyUniformConstantsNeigborhood} deals with the dependence of the constants on $x_0$)
\begin{lemma}\label{lemma:DerivativeTildeHBoundBelow}
    Suppose that
    \begin{equation}\label{eqn:DerivativeTildeHBdBelowHp}
    \tilde{H}(r) \ge r^{3+\sigma}.     
    \end{equation}
    Then, there exist constants $c, C >0$ such that (see also \eqref{eqn:TildeHFirstDerivative})
    \begin{equation*}\label{eqn:DerivativeTildeHBdBelowTh}
    \frac{d}{dr}\tilde{H}(r) \ge C r^{2 + \sigma} \quad \text{if } r \in (0, c).      
    \end{equation*}
\end{lemma}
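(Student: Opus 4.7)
The plan is to argue by contradiction via the blow-up compactness of Lemma \ref{lemma:FrequencyCompactnessBlowUp}. If the conclusion fails, there exist $r_n \to 0^+$ with $\tilde H(r_n) \geq r_n^{3+\sigma}$ and $\frac{d}{dr}\tilde H(r_n) \leq \tfrac{1}{n} r_n^{2+\sigma}$. Combining these, $K_n := r_n \frac{d}{dr}\tilde H(r_n)/\tilde H(r_n) \leq 1/n \to 0$, so \eqref{eqn:FrequencyCompactnessHp} is satisfied at every $r_n$ with arbitrarily small $K$. I will then apply Lemma \ref{lemma:FrequencyCompactnessBlowUp} together with a diagonal exhaustion of $B_1$ by compactly contained sets $A_k \Subset B_1$ to extract, along a subsequence, $C^{1,\beta}_{\mathrm{loc}}$ limits $w_{u,\infty}, w_{v,\infty}$ defined on the upper half-ball $\overline{B_1^+}$, which is the Hausdorff limit of the rescaled domains at the regular point $x_0$.

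Rewriting \eqref{eqn:TildeHFirstDerivative} after the blow-up rescaling gives
\[
\Lambda_u \int_{\Omega_{u,n}\cap B_1} \vert \nabla w_{u,n} \vert^2 + \Lambda_v \int_{\Omega_{v,n}\cap B_1} \vert \nabla w_{v,n} \vert^2 \;=\; \frac{K_n}{2} \;\longrightarrow\; 0,
\]
so in the limit both gradients vanish and the limit functions are constants on $B_1^+$. The identities $u(x_0)=v(x_0)=(x_0)_N=0$ imply $w_u(x_0)=w_v(x_0)=0$, hence $w_{u,n}(0)=w_{v,n}(0)=0$ for every $n$; since $0\in\overline{B_1^+}$ and convergence is in $C^{1,\beta}$ up to the origin, both constants must equal $0$, i.e.\ $w_{u,\infty}\equiv w_{v,\infty}\equiv 0$.

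The contradiction is then obtained by comparing two different scalings of $H$. On one hand, \eqref{eqn:HTildeHDiffBound} combined with $\tilde H(r_n) \geq r_n^{3+\sigma}$ and the choice of $\beta$ close to $1/2$ (so that $r_n^{2+3\beta} = o(\tilde H(r_n))$) shows that the rescaled outer $L^2$ norm
\[
\Lambda_u \int_{\Omega_{u,n}\cap\partial B_1} w_{u,n}^2 + \Lambda_v \int_{\Omega_{v,n}\cap\partial B_1} w_{v,n}^2 \;=\; \frac{H(r_n)}{\tilde H(r_n)} \;\longrightarrow\; 1.
\]
On the other hand, the $C^{1,\beta}_{\mathrm{loc}}$ convergence to $0$ on every $\overline{B_R^+}$ with $R<1$ forces the analogous integral on $\partial B_R$ to vanish, which translates into $\tilde H(Rr_n)/\tilde H(r_n)\to 0$ for every $R\in(0,1)$. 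Averaging $\frac{d}{dr}\tilde H$ on the annulus $[Rr_n,r_n]$ and letting $R\uparrow 1$ produces radii $s_n^\ast\in(Rr_n,r_n)$ at which $\frac{d}{dr}\tilde H(s_n^\ast)/s_n^{\ast,2+\sigma}$ is arbitrarily large, and an iteration of the blow-up procedure at $\{s_n^\ast\}$ in place of $\{r_n\}$ closes the argument once one verifies $\tilde H(s_n^\ast)\geq s_n^{\ast,3+\sigma}$ via the monotonicity $\frac{d}{dr}\tilde H\geq 0$.

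The main obstacle is precisely this interplay between the interior $C^{1,\beta}_{\mathrm{loc}}$ estimate of Lemma \ref{lemma:FrequencyCompactnessBlowUp}, which is only available on strict interior subsets $A\Subset B_1$, and the outer normalization on $\partial B_1$; it is handled by coupling the blow-up convergence on nested balls $B_R$, $R\uparrow 1$, with the error bounds \eqref{eqn:HTildeHDiffBound}, \eqref{eqn:FrequencyA(r)Bound}, \eqref{eqn:FrequencyB(r)Bound} and the flexibility of choosing $\beta$ arbitrarily close to $1/2$ so that those error terms never dominate the threshold $r^{3+\sigma}$.
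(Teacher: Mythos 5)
Your setup coincides with the paper's: the same contradiction hypothesis, the same application of Lemma \ref{lemma:FrequencyCompactnessBlowUp} with $K_n\to 0$, the correct rescaled identity $\Lambda_u\int|\nabla w_{u,n}|^2+\Lambda_v\int|\nabla w_{v,n}|^2=K_n/2\to 0$, and the correct conclusion that the limits are the zero constants. You also correctly identify the real difficulty, namely that the normalization lives on $\partial B_1$ while the $C^{1,\beta}$ convergence is only interior. But your resolution of that difficulty does not close the argument. Producing, via the mean value theorem on $[Rr_n,r_n]$, radii $s_n^\ast$ at which $\frac{d}{dr}\tilde H(s_n^\ast)\,/\,s_n^{\ast\,2+\sigma}$ is large is not a contradiction: the contradiction hypothesis only asserts smallness of $\frac{d}{dr}\tilde H$ at the specific radii $r_n$, not at every radius, so a large derivative elsewhere contradicts nothing. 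Likewise, ``iterating the blow-up at $s_n^\ast$'' is not available, since at $s_n^\ast$ you have no control on $s_n^\ast\frac{d}{dr}\tilde H(s_n^\ast)/\tilde H(s_n^\ast)$ (the quantity that must be bounded to invoke Lemma \ref{lemma:FrequencyCompactnessBlowUp}), and even if you could blow up there, no smallness of the gradient is known at $s_n^\ast$, so the limit need not vanish and no contradiction ensues. As written, the proof is incomplete.

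The fix is one step away from where you stopped, and goes in the opposite direction: instead of extracting a point where the derivative is large, bound the whole integral $\int_{Rr_n}^{r_n}\frac{d}{dr}\tilde H(s)\,ds$ from above using the hypothesis at $r_n$. Since $s\mapsto\int_{B_s}|\nabla w_u|^2$ is nondecreasing, for $s\le r_n$ one has $\frac{d}{dr}\tilde H(s)\le (r_n/s)^{N-1}\frac{d}{dr}\tilde H(r_n)$, hence $\int_{Rr_n}^{r_n}\frac{d}{dr}\tilde H\le (1-R)R^{-(N-1)}r_n\cdot C_nr_n^{2+\sigma}\le C(R)\,C_n\,\tilde H(r_n)\to 0$, while your own observation $\tilde H(Rr_n)/\tilde H(r_n)\to 0$ gives $\int_{Rr_n}^{r_n}\frac{d}{dr}\tilde H=(1-o(1))\tilde H(r_n)$ --- a genuine contradiction. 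The paper reaches the same end differently: it uses the trace inequality and the boundary normalization to get a uniform lower bound $\|w_{u,n}\|_{L^2(B_1^+)}+\|w_{v,n}\|_{L^2(B_1^+)}\ge K>0$, and then a one-dimensional Poincar\'e argument integrating along radii from $\partial B_{\overline r}$ (where the functions are uniformly small) to bound that $L^2$ mass by $(1-\overline r)^2$ times the vanishing gradient norm. Either route works; yours, as stated, does not.
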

\begin{proof}
We proceed by contradiction. Hence, suppose that there exist radii $r_n \to 0$ and positive constants $C_n \to 0$ such that
\begin{equation}\label{eqn:DerivtildeHBoundBelowContradict}
    \frac{d}{dr}\tilde{H}(r_n) < C_n r_n^{2 + \sigma}.
\end{equation}
Thanks to \eqref{eqn:DerivativeTildeHBdBelowHp} and \eqref{eqn:DerivtildeHBoundBelowContradict} we are under the hypotheses of Lemma \ref{lemma:FrequencyCompactnessBlowUp}, with $K = K_n = C_n \to 0$.

Up to a small deformation (Lipschitz continuous near $\partial B_1$ and smooth elsewhere), we can assume that for $n$ large enough,
\[
\{ w_{u, n} > 0 \} \cap B_1 = \{ w_{v, n} > 0 \} \cap B_1 = B_1 \cap \{ x_N > 0 \} \coloneqq B_1^+
\]
that (see \eqref{eqn:FrequencyCompactnessProof1})
\[
\| w_{u, n} \|_{H^1 \left( \Omega_{u, n} \cap B_1 \right)} + \| w_{v, n} \|_{H^1 \left( \Omega_{v, n} \cap B_1 \right)} \le C(1+C_n).
\]
and that conditions \eqref{eqn:DerivativeTildeHBdBelowHp}, \eqref{eqn:DerivtildeHBoundBelowContradict} still hold true.

Let $\overline{r} < 1$ to be fixed later. Up to a subsequence that we do not relabel, there exist two functions $w_{u, \infty}, w_{v, \infty} \in H^1\left( B_1^+ \right) \cap C^{1, \beta}\left( \overline{B_{\overline{r}}^+} \right)$ such that
\begin{align*}
    & w_{u, n} \rightharpoonup w_{u, \infty} \quad \text{and} \quad w_{v, n} \rightharpoonup w_{v, \infty} \quad \text{weakly in } H^1\left( B_1^+ \right) , \\
    & w_{u, n} \to w_{u, \infty} \quad \text{and} \quad w_{v, n} \to w_{v, \infty} \quad \text{strongly in } L^2\left( B_1^+ \right) , \\
    & w_{u, n} \to w_{u, \infty} \quad \text{and} \quad w_{v, n} \to w_{v, \infty} \quad \text{strongly in } C^{1, \beta}\left( \overline{B_{\overline{r}}^+} \right).
\end{align*}
By \eqref{eqn:DerivativeTildeHBdBelowHp} and the contradiction hypothesis \eqref{eqn:DerivtildeHBoundBelowContradict}, we also have
\begin{equation}\label{eqn:AlmostMonotoneGradWuWvL2To0}
    \| \nabla w_{u, n} \|_{L^2\left( B_1^+ \right)} \to 0 \quad \text{and} \quad \| \nabla w_{v, n} \|_{L^2\left( B_1^+ \right)} \to 0,
\end{equation}
so that $w_{u, \infty}$ and $w_{v, \infty}$ are constant over $B_1^+$. However, since $w_{u, n}(0) = w_{v, n}(0) = 0$, by the strong uniform convergence in $\overline{B_{\overline{r}}^+}$ we necessarily have
\[
    w_{u, \infty} = w_{v, \infty} \equiv 0 \text{ in } B_1^+,
\]
and in particular
\begin{equation}\label{eqn:AlmostMonotoneWuWvUnifTo0}
    \| w_{u, n} \|_{C^{1, \beta}\left( \overline{B_r^+} \right)} \to 0 \quad \text{and} \quad \| w_{v, n} \|_{C^{1, \beta}\left( \overline{B_r^+} \right)} \to 0.
\end{equation}
Morever, by the trace inequality, \eqref{eqn:AlmostMonotoneGradWuWvL2To0} and the  normalization chosen (i.e. $L^2$-norm of the trace unitary), there exists a constant $K > 0$ depending only on the dimension such that
\begin{equation}\label{eqn:AlmostMonotoneBdL2Below}
    \| w_{u, n} \|_{L^2\left( B_1^+ \right)} + \| w_{v, n} \|_{L^2\left( B_1^+ \right)} \ge K,
\end{equation}
Now, we combine \eqref{eqn:AlmostMonotoneGradWuWvL2To0}, \eqref{eqn:AlmostMonotoneWuWvUnifTo0} and \eqref{eqn:AlmostMonotoneBdL2Below} to reach a contradiction.

Let $h>0$ be a small constant, to be chosen later. Then, integrating on radii starting from $\partial B_{\overline{r}}$, we can write

By a Poincaré inequality argument we have
\begin{align*}
    & K + o(1) \le \int_{B_1^+} \Lambda_u (\vert w_{u, n} \vert - h)_+^2 + \Lambda_v (\vert w_{v, n} \vert - h)_+^2 = \\
    & = \int_{S^{N-1}} d\sigma \int_{\overline{r}}^1 \left[ \Lambda_u (\vert w_{u, n} \vert - h)_+^2 + \Lambda_v (\vert w_{v, n} \vert - h)_+^2 \right] \rho^{N-1} d\rho \le \\
    & \le \frac{(1- \overline{r})^2}{\overline{r}^{N-1}} \left( \| \nabla w_{u, n} \|_{L^2\left( B_1^+ \right)} + \| \nabla w_{v, n} \|_{L^2\left( B_1^+ \right)} \right) ,
\end{align*}
where the quantity $o(1)$ is intended as $n \to +\infty$, $\overline{r} \to 1^+$ and $h \to +^+$. This is possible thanks to \eqref{eqn:AlmostMonotoneWuWvUnifTo0} and \eqref{eqn:AlmostMonotoneBdL2Below}. Hence, for some fixed $\overline{r}$ sufficiently close to $1$ and $h$ sufficiently close to 0 , for $n$ accordingly large enough we have that
\[
\left( \| \nabla w_{u, n} \|_{L^2\left( B_1^+ \right)} + \| \nabla w_{v, n} \|_{L^2\left( B_1^+ \right)} \right) \ge K \frac{\overline{r}^{N-1}}{(1- \overline{r})^2} + o(1) \ge 1/2 ,
\]
which contradicts \eqref{eqn:AlmostMonotoneGradWuWvL2To0}. The proof is concluded.
\end{proof}
We are ready to give the proof of Proposition \ref{prop:FrequencyAlmostMonotone}.
\begin{proof}[Proof of Proposition \ref{prop:FrequencyAlmostMonotone}.]
By Remark \ref{rmk:FrequencyMonotoneRegimeToProve} we only need to study the regime $\tilde{H}(r) \ge r^{3+\sigma}$.

Using \eqref{eqn:FrequencyBound2ndDeriv} we have
\begin{align}\label{eqn:FrequencyAlmostMonotoneToEstimate}
    \begin{split}
        & \frac{\frac{d}{dr}\tilde{N}(r)}{\tilde{N}(r)} = \frac{1}{r} + \frac{\frac{d^2}{dr^2}\tilde{N}(r)}{\frac{d}{dr}\tilde{N}(r)} - \frac{\frac{d}{dr}\tilde{N}(r)}{\tilde{N}(r)} \ge \\
        \ge & \frac{1}{\frac{d}{dr}\tilde{H}(r)} \left( \frac{4}{r^{N-1}} \int_{\{u>0\} \cap \partial B_r} \Lambda_u \left( \partial_r w_u \right)^2 + \frac{4}{r^{N-1}} \int_{\{v>0\} \cap \partial B_r} \Lambda_v \left( \partial_r w_v \right)^2 - C_{\beta} r^{3\beta} \right) - \frac{\frac{d}{dr}\tilde{H}(r)}{\tilde{H}(r)}.
    \end{split}
    \end{align}
We need to bound both addends. To begin with, let us denote
\begin{align*}
& F(r) \coloneqq \frac{4}{r^{N-1}} \int_{\{u>0\} \cap \partial B_r} \Lambda_u \left( \partial_r w_u \right)^2 + \frac{4}{r^{N-1}} \int_{\{v>0\} \cap \partial B_r} \Lambda_v \left( \partial_r w_v \right)^2 , \\
& I(r) \coloneqq \frac{2}{r^{N-1}} \int_{\{u>0 \} \cap \partial B_r} \Lambda_u w_u \partial_r w_u + \frac{2}{r^{N-1}} \int_{\{v>0 \} \cap \partial B_r} \Lambda_v w_v \partial_r w_v, \\    
& K(r) \coloneqq \frac{2}{r^{N-1}} \int_{\partial \{u>0 \} \cap B_r} \Lambda_u w_u \partial_{\nu} w_u + \frac{2}{r^{N-1}} \int_{\partial \{v>0 \} \cap B_r} \Lambda_v w_v \partial_{\nu} w_v .
\end{align*}
Notice that by Lemma \ref{lemma:DerivativeTildeHBoundBelow} we have $I(r) \ge C r^{2+\sigma}$, while by \eqref{eqn:FrequencyA(r)Bound} we have $\vert K(r) \vert \le C_{\beta} r^{1+3\beta}$. Hence, for some fixed $0<\beta<1/2$ but sufficiently close to $1/2$ and taking into account \eqref{eqn:TildeHFirstDerivative}, we have
\begin{equation}\label{eqn:FrequencyAlmostMonI(r)expansion}
    \frac{1}{\frac{d}{dr}\tilde{N}(r)} = \frac{1}{I(r)+K(r)} = \frac{1}{I(r)} (1 + O(r^{3\beta - 1 - \sigma})).
\end{equation}
Moreover, using that $\tilde{H}(r) \ge r^{3+\sigma}$ together with \eqref{eqn:SharpRegBoundGradW} and a Cauchy-Schwarz inequality.
\begin{equation}\label{eqn:FrequencyAlmostMonF(r)/I(r)Bd}
    \frac{F(r)}{I(r)} \ge \left( \frac{\frac{1}{2} F(r)}{\tilde{H}(r)} \right)^{1/2} = O\left( r^{\beta - 3/2 - \sigma/2} \right).
\end{equation}
Now, combining Lemma \ref{lemma:DerivativeTildeHBoundBelow} with \eqref{eqn:FrequencyAlmostMonI(r)expansion} and \eqref{eqn:FrequencyAlmostMonF(r)/I(r)Bd} we estimate
\begin{align}\label{eqn:FrequencyAlmostMonotoneToProve1}
\begin{split}
    & \frac{(F(r) - C_{\beta} r^{3\beta})}{\frac{d}{dr}\tilde{N}(r)} \ge \frac{ F(r)}{I(r)} (1 + O(r^{3\beta - 2 - \sigma})) - C_{\beta} r^{3\beta - \sigma - 2} \ge \\
    &\ge \left( \frac{\frac{1}{2} F(r)}{\tilde{H}(r)} \right)^{1/2} - C_{\beta} r^{4 \beta - 3/2 - 3/2 \sigma - 1} - C_{\beta} r^{3\beta - \sigma - 2} \ge \left( \frac{\frac{1}{2} F(r)}{\tilde{H}(r)} \right)^{1/2} - C_{\beta} r^{\sigma - 1}
\end{split}
\end{align}
if $0<\beta<1/2$ is fixed sufficiently close to $1/2$.

Now, using that $\tilde{H}(r) \ge r^{3+\sigma}$, \eqref{eqn:FrequencyA(r)Bound}, \eqref{eqn:SharpRegBoundGradW} and a Cauchy-Schwarz inequality we have
\begin{align}\label{eqn:FrequencyAlmostMonotoneToProve2}
\begin{split}
    & \frac{\frac{d}{dr}\tilde{H}(r)}{\tilde{H}(r)} = \frac{1}{\tilde{H}(r)} \left( \frac{2}{r^{N-1}} \int_{\{u>0 \} \cap \partial B_r} \Lambda_u w_u \partial_r w_u + \frac{2}{r^{N-1}} \int_{\{v>0 \} \cap \partial B_r} \Lambda_v w_v \partial_r w_v  \right) + \\
    & + \frac{1}{\tilde{H}(r)} \left( \frac{2}{r^{N-1}} \int_{\partial \{u>0 \} \cap B_r} \Lambda_u w_u \partial_{\nu} w_u + \frac{2}{r^{N-1}} \int_{\partial \{v>0 \} \cap B_r} \Lambda_v w_v \partial_{\nu} w_v  \right) \le \\
    & \le \left( \frac{\frac{1}{2} F(r)}{\tilde{H}(r)} \right)^{1/2} + C_{\beta} r^{3\beta - \sigma - 2} \le \left( \frac{\frac{1}{2} F(r)}{\tilde{H}(r)} \right)^{1/2}+C_{\beta} r^{\sigma - 1}
\end{split}
\end{align}
if $0<\beta<1/2$ is fixed sufficiently close to $1/2$.

Combining \eqref{eqn:FrequencyAlmostMonotoneToEstimate}, \eqref{eqn:FrequencyAlmostMonotoneToProve1} and \eqref{eqn:FrequencyAlmostMonotoneToProve2}, for some fixed $0 < \beta < 1/2$ sufficiently close to $1/2$ we get that
\[
\frac{d}{dr}\tilde{N}(r) + C_{\beta} r^{\sigma - 1} \tilde{N}(r) \ge 0
\]
Now the thesis follows for some $c>0$ depending only on the dimension $N$, once $0 < \beta < 1/2$ is fixed sufficiently close to $1/2$.
\end{proof}

\subsection{Bound from below}\label{subsec:FrequencyBoundBelow}
Using Lemma \ref{lemma:FrequencyCompactnessBlowUp} we can prove that blow-up sequences (defined as in the lemma) are precompact, and that their limits are nontrivial homogeneous solutions of the one-sided two membranes problem \eqref{eqn:ViscousOneSidedTwoMembranePb}. As a consequence, thanks to Lemma \ref{prop:FrequencyAlmostMonotone} we deduce a bound from below for the truncated frequency function $\tilde{N}(r)$.

Before we introduce the main result of this section, we recall that (as already remarked in the proof of Lemma \ref{lemma:DerivativeTildeHBoundBelow}) under the hypotheses of Lemma \ref{lemma:FrequencyCompactnessBlowUp}, up to an infinitesimal deformation (of higher order) which is Lipschitz continuous near $\partial B_1$ and smooth elsewhere, we can assume that for large $n$,
\begin{align}\label{eqn:FrequancyBdBelowHp2}
\begin{split}
& \{ w_{u, n} > 0 \} \cap B_1 = \{ w_{v, n} > 0 \} \cap B_1 = B_1 \cap \{ x_N > 0 \} \coloneqq B_1^+ \\
& \qquad \qquad \qquad \qquad \qquad \qquad \text{and} \\
& \text{conditions \eqref{eqn:FrequencyCompactnessHp} still hold true (up to an infinitesimal error).}
\end{split}
\end{align}
After these preliminaries, we are ready to state the following
\begin{lemma}\label{lemma:FrequencyBdFromBelow}
    Under the hypotheses of Lemma \ref{lemma:FrequencyCompactnessBlowUp} together with \eqref{eqn:FrequancyBdBelowHp2}, there exist two functions 
    \[
    w_{u, \infty} , \, w_{v, \infty} \in H^1\left( B_1^+ \right) \cap C^{1, 1/2}\left( \overline{B_1^+ \cap A} \right) \text{ for all } A \Subset B_1
    \]
    such that
    \begin{itemize}
        \item[(i)] $w_{u, n} \to w_{u, \infty}$ and $w_{v, n} \to w_{v, \infty}$ weakly in $H^1\left( B_1^+ \right)$,

        \item[(ii)] up to a subsequence (not relabeled) $w_{u, n} \to w_{u, \infty}$ and $w_{v, n} \to w_{v, \infty}$ in $C^{1, \beta}\left( \overline{B_1^+ \cap A} \right)$, for all $A \Subset B_1$ and $0 < \beta < 1/2$,
    
        \item[(iii)] at least one function between $w_{u, \infty}$ and $w_{v, \infty}$ is nontrivial in $B_1^+$, 

        \item[(iv)] $w_{u, \infty}$ and $w_{v, \infty}$ solve the problem \eqref{eqn:ViscousOneSidedTwoMembranePb} in $B_1^+ \cap A$, for all $A \Subset B_1$,

        \item[(v)] any nontrivial blow-up between $w_{u, \infty}$ and $w_{v, \infty}$ is homogeneous of degree $\lambda$ in $B_1^+$, with
        \begin{equation}\label{eqn:FrequencyBoundBelow3/2}
        \lambda \coloneqq \lim_{r \to 0^+} \tilde{N}(r) \ge \frac{3}{2} .   
        \end{equation}
    \end{itemize}
\end{lemma}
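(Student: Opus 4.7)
The strategy is to combine the compactness from Lemma~\ref{lemma:FrequencyCompactnessBlowUp}, a viscosity-limit argument in the spirit of Lemma~\ref{lemma:linearizationBranching}, the almost-monotonicity of Proposition~\ref{prop:FrequencyAlmostMonotone}, and a decomposition of the limit pair into a harmonic piece and a Signorini piece which will yield the sharp $3/2$ lower bound. Items (i) and (ii) are almost immediate: Lemma~\ref{lemma:FrequencyCompactnessBlowUp} delivers uniform $C^{1,\beta}$ bounds on every $A\Subset B_1$, so Arzel\`a--Ascoli and a diagonal argument extract a $C^{1,\beta}$ convergent subsequence, while \eqref{eqn:TildeHFirstDerivative} combined with the second hypothesis in \eqref{eqn:FrequencyCompactnessHp} yields a uniform $H^1(B_1^+)$ bound and hence weak $H^1$ convergence. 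The $C^{1,1/2}$ regularity of the limit then follows from Remark~\ref{rmk:ViscousLimitPbsRegularity} once (iv) is established. For (iii) the normalization is designed so that
\[
\int_{\partial B_1^+}\bigl(\Lambda_u w_{u,n}^2+\Lambda_v w_{v,n}^2\bigr)=\frac{H(r_n)}{\tilde H(r_n)}\longrightarrow 1,
\]
thanks to \eqref{eqn:HTildeHDiffBound}, the hypothesis $\tilde H(r_n)\ge r_n^{3+\sigma}$, and $\beta$ sufficiently close to $1/2$; strong $L^2$ compactness of traces under weak $H^1$ convergence then forces a nontrivial limit.

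For (iv) I pass to the viscosity limit. Interior harmonicity is standard stability; the one- and two-phase Bernoulli conditions for $u_n,v_n$, linearized along free boundaries that Hausdorff-converge to $\{x_N=0\}$, translate to the conditions of \eqref{eqn:ViscousOneSidedTwoMembranePb} by touching arguments with competitors for $u,v,q_\eta,m$ exactly as in the proof of Lemma~\ref{lemma:linearizationBranching}. The inequality $w_{u,\infty}\ge w_{v,\infty}$ on $\{x_N=0\}$ follows from $w_{u,n}=-x_N=w_{v,n}$ on the two-phase boundary and $w_{u,n}>-x_N=w_{v,n}$ on the one-phase boundary of $v_n$. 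For (v), Proposition~\ref{prop:FrequencyAlmostMonotone} ensures that $\lambda:=\lim_{r\to 0^+}\tilde N(r)$ exists; rescaling shows that the frequency of the blow-up pair at any radius equals $\lambda$, and the standard Almgren equality-in-Cauchy--Schwarz argument, applied to the weighted pair $(\sqrt{\Lambda_u}\,w_{u,\infty},\sqrt{\Lambda_v}\,w_{v,\infty})$, then forces both components to be $\lambda$-homogeneous.

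The key step is the lower bound $\lambda\ge 3/2$. Introduce $p:=\Lambda_u w_{u,\infty}+\Lambda_v w_{v,\infty}$ and $q:=w_{u,\infty}-w_{v,\infty}$. Combining $\partial_N w_{u,\infty}=\partial_N w_{v,\infty}=0$ on $\{w_{u,\infty}>w_{v,\infty}\}$ with the transmission condition $\Lambda_u\partial_N w_{u,\infty}+\Lambda_v\partial_N w_{v,\infty}=0$ on $\{w_{u,\infty}=w_{v,\infty}\}$ gives $\partial_N p=0$ on all of $\{x_N=0\}\cap B_1$, so $p$ extends by even reflection to a homogeneous harmonic polynomial in $B_1$; in particular, either $p\equiv 0$ or $p$ has integer degree. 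Symmetrically, $q\ge 0$ on $\{x_N=0\}$ with $\partial_N q\le 0$ and $q\cdot\partial_N q=0$, that is, $q$ is a $\lambda$-homogeneous viscosity solution of the Signorini problem in $B_1^+$. \textbf{The main obstacle} is then a case analysis. If $q\equiv 0$, then $w_{u,\infty}=w_{v,\infty}=p$ is a nontrivial harmonic polynomial; the $C^{1,\alpha}$ regularity at the regular point $x_0=0$ (Theorem~\ref{thm:C1aRegularity}) forces $w_u(y)=O(|y|^{1+\alpha})$, hence $w_{u,\infty}(0)=0$ and $\nabla w_{u,\infty}(0)=0$, and a harmonic polynomial with such second-order vanishing has degree at least $2$, giving $\lambda\ge 2$. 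If $q\not\equiv 0$, then $q$ is a nontrivial $\lambda$-homogeneous Signorini solution, and the classical minimal-frequency theorem for the thin obstacle problem (via Almgren's monotonicity combined with the classification of $3/2$-homogeneous Signorini profiles) gives $\lambda\ge 3/2$. In either case $\lambda\ge 3/2$, concluding the proof.
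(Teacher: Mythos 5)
Your proposal is correct and follows essentially the same route as the paper: compactness from Lemma \ref{lemma:FrequencyCompactnessBlowUp}, the normalization $H(r_n)/\tilde{H}(r_n)\to 1$ for nontriviality, the Almgren equality-in-Cauchy--Schwarz argument for homogeneity, and the decomposition into the even-reflection harmonic piece $\Lambda_u w_{u,\infty}+\Lambda_v w_{v,\infty}$ and the Signorini piece $q=w_{u,\infty}-w_{v,\infty}$ for the bound $\lambda\ge 3/2$ (the paper organizes the final case analysis by whether one or both limits are nontrivial, you by whether $q\equiv 0$; the two are equivalent, and yours is marginally cleaner since the Signorini piece also covers the case of a single nontrivial limit). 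One point should be made explicit in the branch $q\not\equiv 0$: a nontrivial $\lambda$-homogeneous viscosity solution of the Signorini problem can have $\lambda=1$ (e.g.\ $q=-x_N$ satisfies $q\ge 0$, $\partial_N q\le 0$ and $q\,\partial_N q=0$ on $\{x_N=0\}$), so to invoke the minimal frequency $3/2$ you must record that $q(0)=0$ and $\nabla q(0)=0$ --- which does hold here, by point (ii) together with $\nabla w_{u,n}(0)=\nabla w_{v,n}(0)=0$, exactly as you already use in the $q\equiv 0$ branch.
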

\begin{proof}
    Point (i) follows from the uniform bound of $w_{u, n}$ and $w_{v, n}$ in $H^1\left(B_1^+\right)$, which follows from hypotheses \eqref{eqn:FrequencyCompactnessHp} and (e.g.) an inequality of type \eqref{eqn:FrequencyCompactnessProof1}.

    Point (ii) is a consequence of Lemma \ref{lemma:FrequencyCompactnessBlowUp} (ii), thanks to the compact embedding in Hölder spaces.

    Point (iii) follows e.g. from \eqref{eqn:FrequencyCompactnessHp} and the compact embedding of $H^1\left(B_1^+\right)$ in $L^2\left(\partial B_1^+\right)$.

   Point (iv) comes e.g. from section \ref{subsec:LimitPb}, or more simply, using point (ii).

   Now we turn to point (v). Concerning the homogeneity, we only deal with the case in which $w_{u, \infty}$ and $w_{v, \infty}$ are both nontrivial (otherwise the argument simplifies). Notice that, in such case, 
   \begin{equation}\label{eqn:FrequancyBlowUpNonDegBoundary}
       \int_{B_r^+} w_{u, \infty}^2 > 0 \quad \text{and} \quad \int_{B_r^+} w_{v, \infty}^2 > 0 \quad \text{for all } r \in (0, 1).
   \end{equation}
   Indeed, suppose by contradiction that for some $r \in (0, 1)$ condition \eqref{eqn:FrequancyBlowUpNonDegBoundary} does not hold. Hence, without loss of generality we can suppose that $w_{u, \infty} = 0$ on $\partial B_r \cap B_1^+$. By the uniqueness of solutions for problem \eqref{eqn:ViscousOneSidedTwoMembranePb}, necessarily $w_{u, \infty} = 0$ in $B_r^+$, and by unique continuation $w_{u, \infty} = 0$ in $B_1^+$. This gives a contradiction to the hypothesis that both blow-ups are nontrivial.
   
   For notational convenience let us denote for $r \in (0, 1)$
   \begin{align*}
       & E_{\infty}(r) \coloneqq \frac{1}{r^{N-2}} \int_{B_1^+ \cap B_r} \left( \Lambda_u \vert \nabla w_{u, \infty} \vert^2 + \Lambda_v\vert \nabla w_{v, \infty} \vert^2 \right) , \\
       & H_{\infty}(r) \coloneqq \frac{1}{r^{N-1}} \int_{ B_1^+ \cap \partial B_r} \left( \Lambda_u w_{u, \infty}^2 + \Lambda_v w_{v, \infty}^2 \right) ,
   \end{align*}
   so that
   \[
   \frac{d}{dr} H_{\infty}(r) = \frac{2}{r} E_{\infty}(r) .
   \]
   Using \eqref{eqn:HTildeHDiffBound}, \eqref{eqn:FrequencyCompactnessHp} and point (ii) we have
   \[
    N_{\infty}(r) \coloneqq \frac{E_{\infty}(r)}{H_{\infty}(r)} = \lim_{n \to +\infty} \frac{rr_n}{2} \frac{\frac{d}{dr}\tilde{H}(r r_n)}{\tilde{H}(rr_n) + O((rr_n)^{1+3\beta})} = \lim_{r \to 0^+} \tilde{N}(r) = \lambda ,
   \]
   where the limit exists thanks to Proposition \ref{prop:FrequencyAlmostMonotone}. Hence, $N_{\infty}(r)$ is constant for $r \in (0, 1)$. We can use this fact to prove the homogeneity of both $w_{u, \infty}$ and $w_{v, \infty}$.

   By a direct computation (similarly as in Lemma \ref{lemma:FrequencyBound2ndDeriv}) and using the boundary conditions of problem \eqref{eqn:ViscousOneSidedTwoMembranePb}, we have
   \begin{align*}
       & 0 = \frac{d}{dr} N_{\infty}(r) = \frac{H_{\infty(r)}\frac{d}{dr}E_{\infty}(r) - \frac{r}{2} \left( \frac{d}{dr} H_{\infty}(r) \right)^2}{H_{\infty}(r)^2} = \\
       & = \frac{2}{\left( r^{N-1} H_{\infty}(r) \right)^2} \Biggl[ \left( \int_{ B_1^+ \cap \partial B_r} \Lambda_u w_{u, \infty}^2 + \Lambda_v w_{v, \infty}^2 \right) \left( \int_{ B_1^+ \cap \partial B_r} \Lambda_u \left(\partial_r w_{u, \infty}\right)^2 + \Lambda_v \left(\partial_r w_{v, \infty}\right)^2 \right) + \\
       & - \left( \int_{ B_1^+ \cap \partial B_r} \Lambda_u w_{u, \infty} \partial_r w_{u, \infty} + \Lambda_v w_{v, \infty} \partial_r w_{v, \infty} \right)^2 \Biggr] .
   \end{align*}
   Let us denote
   \begin{align*}
       & A_{\infty}(r) \coloneqq \int_{ B_1^+ \cap \partial B_r} \Lambda_u w_{u, \infty}^2, \quad B_{\infty}(r) \coloneqq \int_{ B_1^+ \cap \partial B_r} \Lambda_v w_{v, \infty}^2 , \\
       & C_{\infty}(r) \coloneqq \int_{ B_1^+ \cap \partial B_r} \Lambda_u \left(\partial_r w_{u, \infty}\right)^2 , \quad D_{\infty}(r) \coloneqq \int_{ B_1^+ \cap \partial B_r} \Lambda_v \left(\partial_r w_{v, \infty}\right)^2 , \\
       & F_{\infty}(r) \coloneqq \int_{ B_1^+ \cap \partial B_r} \Lambda_u w_{u, \infty} \partial_r w_{u, \infty} , \quad G_{\infty}(r) \coloneqq \int_{ B_1^+ \cap \partial B_r} \Lambda_v w_{v, \infty} \partial_r w_{v, \infty} .
   \end{align*}
   Hence,
   \begin{align*}
       0 = & \left[ A_{\infty}(r) C_{\infty}(r) - F_{\infty}(r) \right] + \left[ B_{\infty}(r) D_{\infty}(r) - G_{\infty}(r) \right] + \\
       & + \left[ A_{\infty}(r) D_{\infty}(r) + B_{\infty}(r) C_{\infty}(r) - 2 F_{\infty}(r) G_{\infty}(r) \right] .
   \end{align*}
   Since each term in the square brackets is non-negative by the Hölder inequality, they all have to vanish. Hence, by the characterization of the identity cases in Hölder's inequality, by the first and second pair of brackets we have that there exist two functions $f(r), g(r) : (0, 1) \to \R$ such that
   \[
   \partial_r w_{u, \infty} = f(r) w_{u, \infty} \quad \text{and} \quad \partial_r w_{u, \infty} = g(r) w_{u, \infty} \quad \text{for all } r \in (0, 1),
   \]
   while from the last pair of brackets, thanks to \eqref{eqn:FrequancyBlowUpNonDegBoundary} we have 
   \[
   f(r) = g(r).
   \]
   Using the constancy of $N_{\infty}(r)$, we obtain 
   \[
   f(r) = g(r) = \frac{\lambda}{r}
   \]
   Since $w_{u, \infty}(0) = w_{v, \infty}(0) = 0$, both functions are homogeneous of degree $\lambda$.

   We are only left to prove \eqref{eqn:FrequencyBoundBelow3/2}. To this aim, let us first consider the case in which only one of the two blow-ups, e.g $w_{u, \infty}$, is nontrivial. In such case, by \eqref{eqn:ViscousOneSidedTwoMembranePb} the function $w_{u, \infty}$ solves a problem with homogeneous Neumann boundary conditions on $\partial B_1^+ \cap \{ x_N = 0 \}$. Moreover, by point (ii) we have $\nabla w_{u, \infty}(0) = 0$. Hence, by elliptic regularity necessarily $\lambda \ge 2$.

   Now suppose that $w_{u, \infty}$ and $w_{v, \infty}$ are both nontrivial, and let us consider the functions
   \[
   w_1 \coloneqq w_{u, \infty} - w_{v, \infty} \quad \text{and} \quad w_2 \coloneqq \Lambda_u w_{u, \infty} + \Lambda_v w_{v, \infty} .
   \]
   By \eqref{eqn:ViscousOneSidedTwoMembranePb} the function $w_1$ is a solution of the thin obstacle (or Signorini) problem, while $w_2$ is again a solution of a problem with homogeneous Neumann boundary conditions on $\partial B_1^+ \cap \{ x_N = 0 \}$. Also in this case, by point (ii) we have $\nabla w_1(0) = \nabla w_2(0) = 0$. Since $w_1$ and $w_2$ cannot both vanish identically (otherwise $w_{u, \infty} = w_{v, \infty} = 0$), $\lambda$ is the minimum between the lowest homogeneity for the two problems, which coincides with $3/2$, namely the lowest homogeneity of the Signorini problem (see e.g. \cite[Section 5]{Fernandez-Real:ThinObstacleSurvey}).
\end{proof}

\subsection{Proof of the sharp regularity}\label{subsec:SharpRegularity}
Using Lemma \ref{lemma:FrequencyCompactnessBlowUp}, Proposition \ref{prop:FrequencyAlmostMonotone} and Lemma \ref{lemma:FrequencyBdFromBelow}, we are now going to prove Theorem \ref{thm:SharpRegularity}.

\begin{proof}[Proof of Theorem \ref{thm:SharpRegularity}.]
    We will prove a pointwise (but uniform) $C^{1, 1/2}$ behavior of the solutions $u$ and $v$ at regular branching points. Once this is achieved, Theorem \ref{thm:SharpRegularity} follows by a projection argument onto such points (for a similar argument see e.g. \cite{ChangLaraSavin:BoundaryRegularityOnePhase} or \cite{FerreriVelichov2023:BernoulliInternalInclusion}).

    Let $x_0 \in \partial \{ u>0 \} \cap \partial \{ v>0 \}$ be a regular point of the free boundaries. Let us denote
    \[
    \tilde{w}_{u, r} \coloneqq \frac{w_u(rx)}{r^{3/2}} \quad \text{and} \quad \tilde{w}_{v, r} \coloneqq \frac{w_v(rx)}{r^{3/2}} , \quad r \in (0, 1) .
    \]
    Taking into account analogous considerations to those leading to \eqref{eqn:FrequancyBdBelowHp2}, we can suppose that both $\tilde{w}_{u, r}$ and $\tilde{w}_{v, r}$ are defined over $B_1^+$.
    
    Our aim is to show that (see Remark \ref{rmk:FrequencyUniformConstantsNeigborhood} for the dependence of the constants on the point $x_0$)
    \begin{equation}\label{eqn:SharpRegToProve}
        \exists \, C, c>0 : \| \tilde{w}_{u, r} \|_{L^{\infty}(B_1^+ )} + \| \tilde{w}_{v, r} \|_{L^{\infty}(B_1^+ )} \le C \quad \text{for all } r \in (0, c).
    \end{equation}
    To begin with, we observe that by Proposition \ref{prop:FrequencyAlmostMonotone} and Lemma \ref{lemma:FrequencyBdFromBelow} (and an integration in $r$) we have
    \[
    \tilde{H}(r) \le C r^{3} \text{ for } r \in (0, c).
    \]
    Since by Proposition \ref{prop:FrequencyAlmostMonotone} the frequency function is almost non-decreasing (hence bounded from above by its value for some fixed $r_0$), \eqref{eqn:SharpRegToProve} follows from Remark \ref{rmk:PrecompactH1/2Tor^3/2} and Lemma \ref{lemma:FrequencyCompactnessBlowUp} (i).
\end{proof}

\subsection*{\bf Acknowledgments}
L.F. and B.V. are supported by the European Research Council (ERC), EU Horizon 2020 programme, through the project ERC VAREG - \it Variational approach to the regularity of the free boundaries \rm (No. 853404). L.F. is also a member of INDAM-GNAMPA.


\bibliographystyle{plain}
\bibliography{feve2023.bib}


\appendix

\medskip
\small
\begin{flushright}
\noindent 
\verb"lorenzo.ferreri@sns.it"\\
Classe di Scienze, Scuola Normale Superiore\\ 
Piazza dei Cavalieri 7, 56126 Pisa (Italy)
\end{flushright}

\begin{flushright}
\noindent 
\verb"bozhidar.velichkov@unipi.it"\\
Dipartimento di Matematica, Università di Pisa\\ 
Largo Bruno Pontecorvo 5, 56127 Pisa (Italy)
\end{flushright}

\end{document}